\date{\today}
\newcommand{\Bil}{\operatorname{Bil}}
\newcommand{\ueps}{u_{\e}}
\newcommand{\veps}{v_{\e}}
\newcommand{\weps}{w_{\e}}
\newcommand{\peps}{p_{\e}}
\newcommand{\jeps}{J_{\e}}
\newcommand{\Jeps}{J_{\e}}
\newcommand{\qeps}{q_{\e}}
\newcommand{\heeps}{\widehat{e}_{\e}}
\newcommand{\hheps}{\widehat{h}_{\e}}
\newcommand{\Aeps}{A_{\e}}
\newcommand{\Feps}{F_{\e}}
\newcommand{\1}{\mathbb{1}}
\newcommand{\oet}{\Omega_{\e}(t)}
\newcommand{\get}{\Gamma_{\e}(t)}
\newcommand{\geps}{\Gamma_{\e}}
\renewcommand{\oe}{\Omega_{\e}}
\newcommand{\reps}{R_{\e}}
\newcommand{\Reps}{R_{\e}}
\newcommand{\Deps}{D_{\e}}
\newcommand{\tueps}{\widetilde{u}_{\e}}
\newcommand{\uepsin}{u_{\e}^{\rm{in}}}
\newcommand{\repsin}{R_{\e}^{\rm{in}}}
\newcommand{\bfxe}{\left[\fxe\right]_Y}
\newcommand{\fxe}{\frac{x}{\e}}
\newcommand{\feps}{F_{\e}}
\newcommand{\psieps}{\psi_{\e}}
\newcommand{\rmin}{R_{\mathrm{min}}}
\newcommand{\rmax}{R_{\mathrm{max}}}
\newcommand{\hueps}{\widehat{u}_{\e}}
\newcommand{\hreps}{\widehat{R}_{\e}}
\newcommand{\hReps}{\widehat{R}_{\e}}
\newcommand{\teps}{\mathcal{T}_{\e}}
\newcommand{\hweps}{\widehat{w}_{\e}}
\newcommand{\hjeps}{\widehat{J}_{\e}}
\newcommand{\hJeps}{\widehat{J}_{\e}}
\newcommand{\hpsieps}{\widehat{\psi}_{\e}}
\newcommand{\phieps}{\phi_{\e}}
\newcommand{\hDeps}{\widehat{D}_{\e}}
\newcommand{\hAeps}{\widehat{A}_{\e}}
\newcommand{\hFeps}{\widehat{F}_{\e}}
\newcommand{\m}{\mathfrak{m}}
\newcommand{\R}{\mathbb{R}}
\newcommand{\N}{\mathbb{N}}
\newcommand{\Z}{\mathbb{Z}}
\newcommand{\e}{\varepsilon}
\newcommand{\norm}[2]{\left\|#1\right\|_{#2}}
\newcommand{\rightwts}[1]{\xrightharpoonup[]{2,#1}}
\newcommand{\rightsts}[1]{\xrightarrow[]{2,#1}}
\newcommand{\Oe}{{\Omega_\e}}
\newcommand{\Ge}{{\Gamma_\e}}
\newcommand{\tOe}{{(0,t) \times \Oe}}
\renewcommand{\div}{\operatorname{\nabla \cdot}}
\newcommand{\per}{\mathrm{per}}
\newcommand{\tw}{\tilde{w}}
\newcommand{\ie}{i.\,e.\,}
\newtheorem{theorem}{Theorem}[section]
\newtheorem{corollary}{Corollary}[theorem]
\newtheorem{lemma}[theorem]{Lemma}
\newtheorem{proposition}[theorem]{Proposition}
\newtheorem{remark}[theorem]{Remark}
\newtheorem{definition}[theorem]{Definition}
\begin{document}

\title{Rigorous derivation of an effective model for coupled Stokes
advection, reaction and diffusion with freely evolving microstructure}
\author{M.~Gahn\footnote{Institute for Mathematics, University Heidelberg, 
Im Neuenheimer Feld 205, 69120 Heidelberg, Germany.} \and M.~A.~Peter\footnote{Institute of Mathematics, University of Augsburg and Centre for Advanced Analytics and Predictive Sciences, University of Augsburg,
86135 Augsburg, Germany} \and I.~S.~Pop\footnote{Hasselt University, Faculty of Sciences, Agoralaan Gebouw D, Diepenbeek 3590, Belgium} \and D.~Wiedemann\footnote{Institute of Mathematics, University of Augsburg, 86135 Augsburg, 
Germany. Current address: Department of Mathematics, 
Technical University of Dortmund,
44227 Dortmund, Germany}}

\maketitle

\medskip

\begin{abstract}  
We consider the homogenisation of a coupled Stokes flow and advection--reaction--diffusion problem in a perforated domain with an evolving microstructure of size $\varepsilon$. 
Reactions at the boundaries of the microscopic interfaces lead to 
the formation of a solid layer having a variable, a priori unknown thickness. This results in a
growth or shrinkage of the solid phase 
and, thus, the domain evolution is not known a priori but induced by the advection--reaction--diffusion process. 
The achievements of this work are the existence and uniqueness of a weak microscopic solution and the rigorous derivation of an effective model for $\varepsilon \to 0$, based on $\varepsilon$-uniform a priori estimates.
As a result of the limit passage, the processes on the macroscale are described by an advection--reaction--diffusion problem coupled to Darcy's equation with effective coefficients (porosity, diffusivity and permeability) depending on local cell problems.
These local problems are formulated on cells, which depend on the macroscopic position and evolve in time. In particular, the evolution of these cells depends on the macroscopic concentration.
Thus, the cell problems (respectively the effective coefficients) are coupled to the macroscopic unknowns and vice versa, leading to a strongly coupled micro--macro model.
For pure reactive--diffusive transport coupled with microscopic domain evolution but without advective transport, homogenisation results have recently been presented. 
We extend these models by advective transport which is driven by the Stokes equation in the a priori unknown evolving pore domain.

\end{abstract}

\paragraph{Keywords:} Homogenisation, free boundary, evolving microstructure, advection--reaction--diffusion process, Stokes flow, two-scale convergence

\paragraph{MSC2020 subject classification:} 35B27, 35K57, 35R35, 76M50, 76S05, 80A19  

\section{Introduction}
Reactive transport in evolving porous media occurs in various real-life applications, 
for instance in mineral precipitation and dissolution \cite{BBPR16, Noo08,SRFMK17}, biofilm growth \cite{SK17}, colloid deposition \cite{ENM22} and water diffusion into absorbent particles \cite{FM02,SDH17}. 
The transport of the soluble species is driven by advection and diffusion.
An illustrative case of this phenomenon is precipitation and dissolution taking place in a porous medium made up of solid grains and pore space, in which the void space is filled with a fluid, such as water, and soluble species can be diffusively and advectively transported. Reactions of the species can yield formation of a solid layer at the fluid--solid interface via precipitation, such as salt. Conversely, the reverse process of dissolution is also possible and solute species can be released back into the fluid. 
We emphasise that the thickness of the precipitate layer is not know a priori but depends on the other unknowns in the model. Therefore, in mathematical terms, the fluid--solid interface is a free boundary.

\paragraph{Literature overview} 
The upscaling of reaction--diffusion problems in evolving domains is important for modelling dissolution and precipitation processes \cite{Noo08, BBPR16, BK17, GFKN20, SRFMK17} as well as biofilm formation processes \cite{SK17}. For example, for dissolution and precipitation in a porous medium, a precipitate layer may be added to or be dissolved from the pore walls, implying that the overall solid part (and, implicitly, the void space) is evolving unless there is a local balance between precipitation and dissolution, see e.g.~\cite{Noo08}.
In \cite{Noo08, RNF12, REK15, BBPR16, RRP16, SK17, SK17a, BVP20}, such processes are modelled as free boundaries by means of a level-set function or phase-field approaches.
However, these models are only formally upscaled by asymptotic two-scale expansions.
For fixed microstructure, related (advection–) diffusion problems were homogenised rigorously in  \cite{MP05, MeirmanovZimin}.

In the case of a fixed domain, rigorous homogenisation results for reactive transport  with nonlinear interface and bulk terms are presented in \cite{gahn2017derivation, gahn_2022, KNP16} and in \cite{Graf2014,DN15,Bunoiu2019} for different scalings of the interface terms.
The homogenisation of non-linear models involving slowly diffusing quantities (diffusion of order $\e^2$) are for instance considered in \cite{BourgeatLuckhausMikelic, HJM94, PR10} and with (fast) diffusion on internal boundaries in \cite{Graf2014CRM,Graf2014_SIMA,Graf2014_NA}.
In \cite{KNP16}, a precipitation--dissolution model involving a multi-valued dissolution rate is homogenised.

Rigorous homogenisation for evolving microstructure can be performed by transforming the equations on a fixed periodic substitute domain and passing to the limit there. This approach was presented in \cite{Pet07} and applied for (advection--)reaction--diffusion processes \cite{GahnNRP, Pet07b,Pet09,Pet09b} and for an elasticity problem in \cite{EM17}. In \cite{wiedemann2021homogenisation}, this transformation approach was used for the homogenisation of quasi-stationary Stokes flow and in \cite{DarcyMemory2024} for an instationary Stokes flow. In the latter two works, a given generic microscopic domain evolution was considered and the fluid velocity at the moving interface was given. The domain evolution was formulated in a generic setting and, for the homogenisation, sufficient conditions on the domain evolution were presented.
In \cite{AA23}, the transformation approach was justified by showing that the homogenisation process commutes with the transformation, i.e.~transforming the problem on a periodic reference domain, performing the homogenisation there and transforming the problem back actually gives the homogenisation result for the original equation. Moreover, two-scale transformation rules for the homogenisation correctors are presented in \cite{AA23} enabling a two-scale back-transformation, which provides a completely transformation-independent homogenisation result. The transformation rules are extended for the Stokes problem in \cite{wiedemann2021homogenisation}.

In \cite{GahnPop2023Mineral, WIEDEMANN2023113168}, the homogenisation of reactive transport for a priori not given evolving microstructure, which depends on the unknowns itself, was considered. The geometrical setting is the same as in the present work. 
More precisely, we assume that, around each grain, the microstructure evolution is radially symmetric and the free boundary can be described locally by the (uniform) thickness of the layer. 
However, we consider here additional advective transport, where the fluid velocity is modelled by the Stokes equations. This requires not only the inclusion of the Stokes equations in a non-linear manner, but we also have to deal with the new advective term in the advection--reaction--diffusion equation. Since the coefficents of this advective term arise as solution of the Stokes equations, it provides low regularity and complicates the homogenisation. Hence, let us summarize the new aspects and highlights of our paper, which we will specify in more detail straight afterwards.
\\

\noindent\textbf{Highlights}
\begin{itemize}
    \item Existence and uniqueness for a coupled Stokes and advection--reaction--diffusion system in an evolving microdomain
    \item Essential bounds for the concentration uniformly with respect to $\e$
    \item General (strong) two-scale compactness results based on $\e$-uniform a priori estimates associated to problems in evolving microdomains
    \item Coupled Darcy and advection--reaction--diffusion system as the homogenised limit, with effective coefficients depending on cell solutions on evolving micro-cells
\end{itemize}

\paragraph{Content of the paper} 
In what follows, let the time interval $(0,T)$ be given by $T>0$. The heterogeneous medium $\Omega \subset \R^n$ (for $n \in \{2,3,4\}$) consists of the $\e$-scaled pore space $\Oe$ which is obtained by removing the solid grains from $\Omega$. The centres of the grains are distributed periodically, where the distance between two neighbouring centres is of order $\e$ and, thus, the pore space $\Oe(t)$ depends on $\e$ and the time $t$ for $t \in  [0,T]$.
In the pore space $\Oe(t)$, we consider an advection--reaction--diffusion equation for the solute species $u_\e$ and the Stokes equations for the fluid velocity $v_\e$ and the fluid pressure $p_\e$. 
We assume that the periodically distributed solid grains are spheres with radii $\reps$ of the order of the microscopic length scale $\e$. The grains evolve independently of each other and we assume that the evolution is radially symmetric and, thus, well described by the radii. The radii are given as a solution of 
ordinary differential equations (ODEs), which depend on the 
solute 
concentration on the surface of the grains and, therefore, on the unknowns of the system.

To establish existence of a microscopic solution and for deriving the macroscopic model, we transform the problem onto a substitute domain which is periodic and constant with respect to time. Since the evolution of the domain is coupled with the solution of the system, the transformation mapping depends on the unknowns of the model and, thus, the resulting transformed equations become highly non-linear with coefficients depending on the transformation. 
To show the existence and uniqueness of a weak solution of the microscopic model (on the fixed domain $\Oe$) we use Sch\"afer's fixed point theorem, where we construct the fixed point operator as the composition of the following three operators. The first provides the domain evolution for given concentration (solves the ODE for $\reps$), the second provides a solution of the Stokes equation for given domain evolution, and the third operator gives the solution of the nonlinear advection--reaction--diffusion equation for given fluid flow and domain evolution. 
In order to obtain the continuity of these operators, a very subtle choice for the corresponding spaces is required.
Moreover, the uniqueness of the solution for the full microscopic problem is obtained by energy methods, which require additional Lipschitz continuity for the operators.

To pass to the limit $\varepsilon \to 0$ and derive the macroscopic model, we use the concept of two-scale convergence, respectively the unfolding method. In order to deal with the nonlinear terms, we need the strong two-scale convergence. In particular, we require such strong compactness results for the concentration $\ueps$ and the nonlinear coefficients arising from the domain transformation. These coefficients depend on the transformation mapping, and since the evolution of the microscopic domain can be described by the radius $\reps$, the strong two-scale convergence of the coefficients can be reduced to the strong (two-scale) convergence of $\reps$.
Let us point out a crucial difficulty in the derivation of the strong two-scale convergence of $\ueps$. Due to the nonlinear structure of the problem, we are not able to obtain $\varepsilon$-uniform a priori bounds for the time derivative $\partial_t \ueps$, which together with an $L^2$-energy bound for $\ueps$ and its gradient would guarantee the strong two-scale convergence of $\ueps$, see \cite{Gahn, MeirmanovZimin}. In our case, we only obtain an $\varepsilon$-uniform bound for $\partial_t (\jeps \ueps)$, where $\jeps$ denotes the Jacobi-determinant of the transformation mapping.
Now, for the strong convergence of $\ueps$, the idea is to use a Kolmogorov--Simon-type compactness result, for which we have to control shifts with respect to space and time, and for the latter we use the bound on $\partial_t (\jeps \ueps)$. Such a result was already obtained in \cite{GahnPop2023Mineral}.  In our paper, we present a similar strong two-scale compactness result with a simplified proof and under weaker a priori bounds. We point out that in \cite{WIEDEMANN2023113168} the strong convergence of $\ueps$ for the pure reaction--diffusion was obtained by showing an estimate for the time-shifts directly from the weak variational equation. 
We emphasise that, in contrast to the results in \cite{GahnPop2023Mineral,WIEDEMANN2023113168}, due to the additional advective term in our microscopic problem, we need an $\varepsilon$-uniform essential bound for the concentration $\ueps$ in order to control the time derivative $\partial_t (\jeps \ueps)$. 
In view of the application, this essential boundedness is natural.

For the strong convergence of the radius $\reps$, the standard Kolmogorov compactness theorem is applied. For this, we use the $\varepsilon$-uniform energy bounds for $\ueps$ and its gradient, and that  $\reps$ depends Lipschitz-continuously on  $\ueps$. 
Such a result (for the same a priori estimates) was also obtained in \cite{GahnPop2023Mineral}, but under slightly stronger assumptions on the initial value for $\reps$. In \cite{WIEDEMANN2023113168}, the strong convergence was obtained by comparing the microscopic ODE for $\reps$ with the (expected) macroscopic ODE. For this, the strong convergence of $\ueps$ was necessary. We emphasise that, in our approach, this strong convergence is necessary only to identify the limit problem for the macroscopic radius, and not to obtain the strong convergence of $\reps$. 
With the strong two-scale convergence of the coefficients and of the concentration $\ueps$, and the weak two-scale convergence of the fluid velocity $\veps$, we are able to pass to the limit in the microscopic reaction--advection--diffusion equation. Finally, to obtain the full macromodel, we have to derive Darcy's law for the Stokes system with the pulled-back coefficients. For the latter, again using the strong convergence of $\reps$, we obtain the strong two-scale convergence, and therefore we can pass to the limit $\varepsilon \to 0$. 
We mention that a similar homogenisation result was obtained in \cite{wiedemann2021homogenisation}, but for an a priori known evolving microstructure. However, in the present paper, we use a slightly different technique. In fact, for the derivation of the so-called two-pressure Stokes problem (with pulled back coefficients), we use a two-pressure decomposition as in \cite{allaire1997one}. Further, we first formulate the Darcy-law with its cell problems on the fixed reference elements.

The resulting effective equations consist of a homogenised advection--reaction--diffusion equation describing the species transport, an ODE (depending on the macroscopic variable) describing the local reference cell evolution and a Darcy law for evolving microstructure. 
The porosity as well as the effective diffusivity and permeability can be obtained by means of the solutions of cell problems, which take the local microstructure into account.
Since we perform the homogenisation on the the fixed microscopic domain $\Oe$ with the pulled-back coefficients depending on the chosen transformation, we obtain, in a first step, cell problems on a fixed reference cell with coefficients depending on a local transformation. By transforming these equations to the actual local space- and time-dependent cell domain, we obtain a completely transformation-independent homogenisation result.
In particular, the cell problems and the effective coefficients depend on the macroscopic quantities via the local cell geometry. This leads to a strongly coupled micro--macro system.

The paper is organised as follows:
In Section \ref{sec:MicroModel}, we present the microscopic model on the evolving  domain $\Oe(t)$ and transform it onto the fixed substitute domain $\Oe$.
In Section \ref{SectionMainResults}, we formulate the macroscopic model and give the main results of the paper. The proof of existence and uniqueness of a  microscopic solution are given in Section \ref{sec:Micromodel:Existence+Uniqueness}. Afterwards, we derive in Section \ref{SectionAprioriEstimates} the $\varepsilon$-uniform
a priori estimates, which form the basis for the compactness results necessary to pass to the limit $\varepsilon \to 0$. These compactness results are given in Section \ref{sec:Homogenisation}, where we also derive the macroscopic model with the effective coefficients depending on cell problems formulated on fixed reference elements and with coefficients depending on the transformation. Moreover, we transform the effective diffusivity, Darcy velocity and permeability on the space- and time-dependent cells, which yields a transformation-independent homogenisation result.
In Appendix \ref{sec:AppendixA}, we state technical auxiliary results. In Appendix \ref{SectionTwoScaleConvergence}, we give a brief introduction to the two-scale convergence and the unfolding operator, and state some basic properties.

\paragraph{Basic notations}
Let $\Omega \subset \R^n$ be open and $f : \Omega \to \R^{m\times l}$ for $n,m,l \in \N$. We  denote the Fr\'echet derivative  of $f$ by $\partial f$. Especially, for $l=1$ ($f:\Omega \rightarrow \R^m$) we identify $\partial f$ with its Jacobian matrix given by $(\partial f(x))_{ij} \coloneqq  \partial_{x_j} f_i(x) = \partial_i f_j(x)$ for $i=1,\ldots,n$, and $j=1,\ldots,m$. Further, we define  the gradient of $f:\Omega \rightarrow \R^m$ by $\nabla f \coloneqq \partial f^\top$. For a matrix-valued function $A: \Omega \rightarrow \R^{n\times n}$, we define its divergence $\nabla \cdot A  : \Omega \rightarrow \R^n$ for $i=1,\ldots , n$ by
\begin{align*}
    [\nabla \cdot A]_i := \sum_{j=1}^n \partial_j A_{ji}.
\end{align*}
Thus, the operator $\div$ applies to the columns of $A$.
In particular, for a vector field $v : \Omega \rightarrow \R^n$, we define the Laplace operator by $\Delta v := \nabla \cdot (\nabla v) = (\nabla \cdot (\nabla v_i))_{i=1}^n$. With $\phi: \Omega \rightarrow \R$, we have the following identities, which will be used frequently throughout the paper:
\begin{align}
\nabla \cdot (A v)  &= [\nabla \cdot A] \cdot v + A :\nabla v,
\\
\nabla \cdot (\phi A) &= \phi \nabla \cdot A + A^\top \nabla \phi,
\end{align}
with the Frobenius product $B:C:= \mathrm{tr}(B^{\top} C) =  \sum_{i,j=1}^n B_{ij}C_{ij}$ for $B,C \in \R^{n\times n}$.

Now, let $\psi : (0,T) \times \Omega \rightarrow \widetilde{\Omega} \subset \R^n$ such that  $\psi(t,\cdot)$ is a diffeomorphism from $\Omega$ to $\widetilde{\Omega}$. We define its Jacobian determinant $J:= \det(\nabla \psi)$ and write $A(x):= J(x)  \left(\partial \psi(x)\right)^{-1}$ for $x \in \Omega$. Then, the Piola identity (see \cite[p.117]{MarsdenHughes})
\begin{align}\label{PiolaIdentity}
\nabla \cdot A = 0
\end{align}
holds.
Together with the Jacobi formula for the derivative of the determinant, we get for $V:= A \partial_t \psi$ 
\begin{align}\label{Id:dtDet_V}
    \partial_t J = \nabla \cdot V.
\end{align}

Let $n,d\in \N$, then for $\Omega\subset \R^n$ we denote by $L^p(\Omega)^d, \, W^{1,p}(\Omega)^d $ the standard Lebesgue and Sobolev spaces with $p \in [1,\infty]$. In particular, for $p=2$, we write $H^1(\Omega)^d:= W^{1,2}(\Omega)^d$. For the norms, we omit the upper index $d$, for example we write $\|\cdot\|_{L^p(\Omega)}$ instead of $\|\cdot \|_{L^p(\Omega)^d}$. For a separable Banach space $X$ and $p \in [1,\infty]$, we denote the usual Bochner spaces by $L^p(\Omega,X)$ 
or $L^p((0, T), X)$ when the time is involved. 
For the dual space of $X$, we use the notation $X'$. Further, we consider the following periodic function spaces. Let $Y:= (0,1)^n$ be the unit cell in $\R^n$, then $C^{\infty}_{\per}(Y)$ is the space of smooth functions on $\R^n$ which are $Y$-periodic and $W^{1,p}_{\per}(Y)$ is the closure of $C^{\infty}_{\per}(Y)$ with respect to the norm on $W^{1,p}(Y)$. Again, we use the notation $H^1_{\per}(Y):= W^{1,2}_{\per}(Y)$. Finally, for a subset $Y^{\ast}\subset Y$ with $\partial Y \subset \partial Y^{\ast}$, we denote by $H^1_{\per}(Y^{\ast})$ the space of  functions from  $H_{\per}^1(Y)$ restricted to $Y^{\ast}$.

\section{The microscopic model}\label{sec:MicroModel}
In this section, we introduce the microscopic model, which is at first formulated on a time-dependent evolving microscopic domain $\Oe(t)$. More precisely, the transport and fluid equations are formulated in the current configuration (Eulerian coordinates) $\Oe(t)$. The heterogeneous domain $\Oe(t)$ is given as a rectangle periodically perforated by solid grains of spherical shape, where the radii of the solids differ and change in time, and are described by $\reps(t,x)$. The evolution of the radius depends on the concentration at the surface of the grains. In a first step, we transform the model to a fixed (time-independent) domain with periodically distributed solid grains of identical radii (Lagrangian coordinates). 

\subsection{The microscopic model in the evolving domain}
Let $T>0$ and, for $n \in \{2,3,4\}$, we consider a rectangle  $\Omega = (a,b)\subset \R^n$ with  $a,b \in \Z^n$ and $a_i< b_i$ for all $i \in \{1, \dots, n \}$. Further, let $\e>0$ be a sequence tending to zero such that $\e^{-1} \in \N$. We denote the unit cube by $Y:=(0,1)^n$ and we define 
\begin{align*}
    K_{\e}:= \left\{k \in \Z^n\, : \, \e (k + Y) \subset \Omega \right\}.
\end{align*}
We describe by $\reps$ the radii of the solid grains in the evolving microdomain, where the evolution of $\reps$ is given by the ordinary differential equation $\eqref{MicroscopicModel_ODE}$ below. More precisely, for  $\reps: (0,T)\times \Omega \rightarrow \left[\e R_\mathrm{min} , \e R_\mathrm{max}\right]$, constant on every microcell $\e (Y + k)$ with $k\in K_{\e}$, we define the evolving microdomain $\Oe(t)$ by
\begin{align*}
    \Oe(t) := \Omega \setminus \bigcup\limits_{k \in K_\e} \overline{B_{\Reps,k}(t)},
\end{align*}
where 
\begin{align*}
    B_{\Reps,k}(t) \coloneqq B_{\Reps(t,x)}( \e( \m + k))
\end{align*}
with $x \in \e (k + Y)$ and  $\m$ the center of the cube $Y$. Further, we denote the evolving surface of the solid grains by
\begin{align*}
    \geps(t):= \partial \Oe(t) \setminus \partial \Omega.
\end{align*}
The non-cylindrical domains are now given by
\begin{equation}\label{eq:QH}
Q_{\e}^T := \bigcup_{t\in (0,T)} \{t\} \times \oet, \quad 
\text{ and } \quad 
H_{\e}^T := \bigcup_{t \in (0,T)} \{t\} \times \get.
\end{equation}
Finally, we let $\nu_{\e}$ stand for the unit normal of $\get$ pointing out of $\oet$.

 We consider the following microscopic problem:   \\[0.5em]
 {\bf Problem P$_\e$}. Find $\ueps, \reps, \peps: Q_{\e}^T \rightarrow \R$ and $\veps: Q_{\e}^T \rightarrow \R^3$ such that $\ueps$ solves the transport equation 
\begin{subequations}\label{MicroscopicModel}
\begin{align}
\partial_t \ueps - \div \big(D\nabla \ueps - \veps \ueps  \big) &= f(\ueps) &\mbox{ in }& Q_{\e}^T,
\\\label{eq:RH}
-(D\nabla \ueps - \veps \ueps) \cdot \nu_{\e} &= -\partial_t \reps \ueps + \rho g(\ueps, \e^{-1} \Reps) &\mbox{ on }& H_{\e}^T,
\\\label{eq:DirichletBCueps}
\ueps &= 0 &\mbox{ on }& (0,T)\times \partial \Omega,
\\
\label{MicroscopicModel_ICu}
\ueps(0) &= \uepsin &\mbox{ in }& \Omega_{\e}(0),
\end{align}
the radius of the balls in the microcells solves the ODE
\begin{align}
    \label{MicroscopicModel_ODE}
\partial_t \reps &= \e  \fint_{\geps\left(t,\e \bfxe\right)} g\left(\ueps(t,z),\frac{\reps(t,z)}{\e}\right) d\sigma_z  &\mbox{ in }& Q_{\e}^T,
\\
\label{MicroscopicModel_ICR}
\reps(0) &= \repsin &\mbox{ in }& \Omega_{\e}(0),
\end{align}
and the fluid velocity $\veps$ and the fluid pressure $\peps$ are solutions of the Stokes problem
\begin{align}\label{eq:Stokes}
    -\e^2 \div (e(\veps)) + \nabla \peps &= h_{\e} &\mbox{ in }& Q_{\e}^T,
    \\
    \nabla \cdot \veps &= 0 &\mbox{ in }& Q_{\e}^T,
    \\\label{eq:Stokes:NormalVelocityBC}
    \veps &=-\partial_t \reps \nu_{\e} &\mbox{ on }& H_{\e}^T
    \\\label{eq:Stokes:NormalStressBC}
    \left(- \e^2 e(\veps) + \peps I \right) \nu &= p_\mathrm{b} \nu &\mbox{ on }& (0,T)\times \partial \Omega,
\end{align}
with the symmetric gradient
\begin{align*}
    e(\veps) := \frac12 \left(\nabla \veps + \nabla \veps^T \right).
\end{align*}
\end{subequations}
Observe that the velocity $|\veps|$ in \eqref{eq:Stokes:NormalVelocityBC} equals $|\partial_t \reps|$, which is analogous to assuming that the molar density of the solid mineral layer is negligible when dissolved in the fluid phase. Therefore, the free boundary and the fluid move with the same velocity. We refer to \cite{Peter2007,Noo09} for details. Further, by assuming that the growth is radially symmetric, the mass balance in \eqref{eq:RH} does not necessarily hold pointwisely, but only in an averaged sense. More precisely, this mass balance is obtained when averaging \eqref{eq:RH} over each of the grains, so one can interpret this as a cell-averaged Rankine--Hugoniot condition. 

Furthermore, note that we have chosen the viscosity in the Stokes problem equal to $\tfrac{1}{2}$ for sake of shorter notation. 
Since the model considered here is dimensionless, this is not a restriction but the result of the choice of the scaling.

\subsection{Microscopic problem on the fixed domain}
We reformulate {\bf Problem P$_\e$} on an in-time cylindrical and $\e$-periodic domain  by transforming \eqref{MicroscopicModel} via a family of diffeomorphisms $\psi_\e(t, \cdot ) : \Oe \to \Oe(t)$, for a periodic reference domain $\Oe$, which is given by a reference radius $\bar{R} \in [\rmin,\rmax]$ via
\begin{align}
    \Oe = \Omega \setminus \bigcup\limits_{k \in K_\e}  \overline{B_{\e \bar{R}}( \e( \m + k)}.
\end{align}
We construct $\psi_\e$ explicitly by the corresponding displacement mapping $\widecheck{\psi}_\e(t,x) = \psi_\e(t,x) -x$.
Therefore, we construct the following generic displacement in the reference cell:
let $\chi \in C^\infty([0,\infty);[0,1])$ be monotonically decreasing with $\chi(y) = 1$ for $s \leq \rmax$ and $\chi(s) = 0$ for $s \geq (\rmax +0.5) /2$, then
\begin{align*}
    \check{\psi}(R;y):= (R -R_\mathrm{max})  \tfrac{y-m}{\|y-m\|} \chi(\| y-\m \|).
\end{align*}
Then, the mapping
\begin{align}\label{eq:def:psi}
    y \mapsto \psi(R;y) := y + \check{\psi}(R;y) = y + (R-\rmax) \tfrac{y-m}{\|y-m\|} \chi(\| y-\m \|),
\end{align}
 is a diffeomorphism from 
$Y^*_{\rmax}$ onto $Y^*_R$ for every $R \in [\rmin,\rmax]$, where $Y^*_R \coloneqq (0,1)^n \setminus \overline{B_{\bar{R}}( \m)}$ for $R \in  [\rmin,\rmax]$ and $Y^*\coloneqq  Y^*_{\bar{R}} = Y^*_{\rmax}$.

By $\e$-scaling the generic displacement, we obtain
\begin{align}\label{eq:def:check_psi_eps}
\widecheck{\psieps}(t,x):=  \e \check{\psi}\left(\e^{-1}\reps(t,x);\{\tfrac{x}{\e}\}\right)
\end{align}
for $R_\e(t,\cdot)$ constant on $\e (k + Y^*)$, where $[x]$ denotes the integer part of $x$ and  $\{x\} \coloneqq x - [x]$. We note that $\widecheck{\psi}$ is zero in a neighbourhood of the boundary of $Y$ and, thus, $\widecheck{\psieps}$ is also smooth at the cell interfaces $\e (k +\partial Y)$ for $k \in K_\e$, even though $\Reps(t,x)$ and $\{\tfrac{x}{\e}\}$ contain jumps at the interface of the cells.
From the $\e$-scaled displacements, we obtain the $\e$-scaled diffeomorphisms
\begin{align}\label{eq:def:psi_eps}
    \psieps(t,x)= x +  \e \check{\psi}\left(\e^{-1}\reps(t,x);\{\tfrac{x}{\e}\}\right)
\end{align}

We use the following notation for the Jacobians of $\psieps$, its adjugate and the transformed diffusion coefficient:
\begin{align}
    \label{def:Feps}
    \Feps &= \partial \psieps = \nabla \psieps^\top,
    \\
    \label{def:Jeps}
    \Jeps &= \det(\Feps),
    \\
    \label{def:Aeps}
    A_{\e}&= \jeps \feps^{-1},
    \\
    \label{def:Deps}
    D_{\e} &= \jeps \feps^{-1} D \feps^{-T}.
\end{align}

Having such a well-posed mapping between the radii $R_\e$ and the domain transformation $\psi_\e$ at hand, the microscopic model in the fixed domain reads as follows: 
\\
\begin{subequations}\label{MicroModelFixedDomain}
\noindent Reaction--diffusion--advection equation:
\begin{align}\label{eq:StrongFormDiffusion:Transformed}
\begin{aligned}
    \partial_t (\jeps \ueps) - \div \left( D_{\e} \nabla \ueps  - \ueps \Aeps(\veps-\partial_t\psieps)  \right) &= \jeps f(\ueps) &\mbox{ in }& (0,T)\times \oe,
    \\
    -(D_{\e} \nabla \ueps - \ueps \Aeps(\veps-\partial_t\psieps)) \cdot \nu &= \jeps \rho g(\ueps, \e^{-1}\Reps) &\mbox{ on }& (0,T)\times \geps,
    \\
    \ueps&= 0 &\mbox{ on }& (0,T)\times  \partial \Omega,
    \\
    \ueps(0) &= \ueps^\mathrm{in} &\mbox{ in }& \Oe.
\end{aligned}
\end{align}
\\

\noindent Evolution of the radii:
\begin{align}
\label{eq:StrongFormR:Transformed}
    \partial_t \reps &= \e \fint_{\geps\left(\e \bfxe\right)} g\left(\ueps(t,z),\frac{\reps(t,z)}{\e}\right) \,\mathrm{d}\sigma_z  &\mbox{ in }& (0,T)\times \oe,
    \\
    \reps(0) &= \reps^\mathrm{in} &\mbox{ in }& \oe,
\end{align}
with $ \geps\left(z\right) \coloneqq \e \Gamma + z \textrm{ for } z \in \R^n.$
\\

\noindent Stokes equations: 
\begin{align}\label{eq:StrongFormStokes:v:Transformed}
\begin{aligned}
    -\e^2 \div(A_\e e_\e(v_\e)) +  A_{\e}^\top \nabla \peps  &= J_\e h_{\e} &\mbox{ in }& (0,T)\times \oe,
    \\
    \nabla \cdot (A_{\e} \veps ) &= 0 &\mbox{ in }& (0,T)\times \oe,
    \\
    \veps &= -\partial_t \reps \nu &\mbox{ on }& (0,T)\times \geps,
    \\
    \left(-\e^2e_\e(v_\e)  +  \peps I    \right)A_{\e}^\top\nu &= p_\mathrm{b} A_{\e}^\top\nu &\mbox{ on }& (0,T)\times \partial \Omega,
\end{aligned}
\end{align}
\end{subequations}
where 
\begin{align}
e_\e(v) \coloneqq \frac{1}{2} \left(\feps^{-\top} \nabla v + \nabla v^\top \feps^{-1} \right).
\end{align}

\begin{remark}\label{RemarksTrafoVelocity}\
\begin{enumerate}[label = (\roman*)]
\item The divergence equation for the Stokes system can be written as 
\begin{align*}
A_{\e} : \nabla \veps  = \nabla  \cdot (A_{\e} \veps) = 0.
\end{align*}
\item The boundary condition for the fluid velocity $\veps$ on $\geps$ can be expressed as
\begin{align*}
   -\partial_t \reps \nu = \partial_t \psieps  \qquad\mbox{on } (0,T)\times \geps.
\end{align*}
\item Since $\psieps(t,x) = x$ holds on $\partial \Omega$, we obtain for the stress boundary condition for the fluid
\begin{align*}
    \left(-\e^2 e(\veps) + \peps I \right) \nu  = p_\mathrm{b} \nu.
\end{align*}
However, in what follows, we also use the transformed stress boundary condition including $\psieps$, which also covers more general geometries. For example, in \cite{wiedemann2021homogenisation} an a priori known evolving microscopic domain was considered with perforations intersecting the outer boundary $\partial \Omega$. In this case, one has to consider $p_\mathrm{b} \circ \psieps$. 
\item From the Piola identity $\nabla \cdot A_{\e} = 0$ and the Jacobi formula for the derivative of the determinant, we obtain 
\begin{align*}
    \nabla \cdot (A_{\e} \weps ) = -\nabla \cdot (A_{\e} \partial_t \psieps ) = -\partial_t \jeps.
\end{align*}
\end{enumerate}

\end{remark}

In order to formulate the Stokes equations as a problem with a no-slip condition on $\geps$, we subtract the Dirichlet boundary values $-\partial_t \Reps \nu = \partial_t \psieps$  from the solution $\veps$. More precisely, we consider $\weps:= \veps - \partial_t \psieps$. From the Piola identity $\eqref{PiolaIdentity}$ (see also $\eqref{Id:dtDet_V}$), we get $\nabla \cdot (\Aeps \weps) = -\partial_t \jeps$.  Moreover, to simplify the notation, we subtract the normal stress boundary values from the pressure $\peps$, i.e.~$q_\e \coloneqq \peps - p_\mathrm{b}$.  Thus, we get the following Stokes system:
\begin{align}
\begin{aligned}
    -\e^2 \div(A_\e e_\e(w_\e)) + A_{\e}^\top \nabla \qeps  &= J_\e h_{\e} +\e^2 \div( A_\e e_\e(\partial_t \psieps)) - A_{\e}^\top \nabla p_\mathrm{b} &\mbox{ in }& (0,T)\times \oe,
    \\
    \nabla \cdot (A_{\e} \weps ) &= - \nabla \cdot (A_{\e} \partial_t \psieps ) &\mbox{ in }& (0,T)\times \oe,
    \\
    \weps &= -\partial_t \reps \nu &\mbox{ on }& (0,T)\times \geps,
    \\
    \left(-\e^2e_\e(w_\e)  +  \qeps I    \right)A_{\e}^\top\nu &=  \e^2e_\e(\partial_t \psieps) A_{\e}^\top\nu  &\mbox{ on }& (0,T)\times \partial \Omega.
\end{aligned}
\end{align}

\subsection{The weak formulation}

We give the definition of a weak solution of the microscopic model $\eqref{MicroModelFixedDomain}$ in the fixed domain. 
\begin{definition}\label{def:DefinitionWeakSolutionMicroModel}
We call $(\ueps,\reps,\weps,\qeps)$ a weak solution the micromodel $\eqref{MicroModelFixedDomain}$ if 
\begin{align*}
    \ueps &\in L^2((0,T),H^1_{\partial \Omega}(\oe)) \quad \mbox{ with } \partial_t (\jeps \ueps) \in L^2((0,T),H^1_{\partial \Omega}(\oe)'),
    \\
    \reps &\in W^{1,\infty}((0,T),L^2(\oe))\cap L^{\infty}((0,T)\times \oe) \quad \mbox{ with } \partial_t \reps \in L^{\infty}((0,T)\times \oe),
    \\
    \weps &\in L^2((0,T),H^1_{\Ge}(\oe)^n) ,
    \\
    \qeps &\in L^2((0,T),L^2(\oe)),
\end{align*}
such that, for every $\phieps \in H_{\partial \Omega}^1(\oe)$, there holds almost everywhere in $(0,T)$  
\begin{subequations}\label{eq:WeakForm:Microproblem}
\begin{align}
\begin{aligned}\label{eq:weakForm:u_eps}
    \langle \partial_t (\jeps \ueps) , \phieps \rangle_{H^1_{\partial \Omega}(\oe)',H^1_{\partial \Omega}(\oe)} + \int_{\oe} &\left[ D_{\e} \nabla \ueps - \ueps \Aeps\weps\right] \cdot \nabla \phieps \,\mathrm{d}x
    \\
    &= \int_{\oe} \jeps f(\ueps) \phieps dx - \rho \int_{\geps} \jeps g(\ueps, \e^{-1} \Reps) \phieps \,\mathrm{d}\sigma. 
\end{aligned}
\end{align}
It holds almost everywhere in $(0,T)$ that  
\begin{align}
\begin{aligned}\label{eq:weakForm:r_eps} \partial_t \reps =\e \fint_{\geps\left(\e \bfxe\right)} g\left(\ueps(t,z),\frac{\reps(t,z)}{\e}\right) \,\mathrm{d}\sigma_z  &\mbox{ in }& (0,T)\times \oe .
\end{aligned}
\end{align}
Moreover, for all $\eta_{\e} \in H^1_{\geps}(\oe)^n$, there holds almost everywhere in $(0,T)$  
\begin{align}
\begin{aligned}\label{eq:weak_w_eps_fixed_domain}
 \e^2 &\int_{\oe} \jeps e_\e(\weps ) : e_\e (\eta_{\e})  \,\mathrm{d}x - \int_{\oe } \qeps \nabla \cdot (A_{\e} \eta_{\e})  \,\mathrm{d}x \\
    &= -
    \e^2 \int_{\oe} \jeps e_\e(\partial_t \psi_\e) : e_\e (\eta_{\e}) \,\mathrm{d}x
    +
    \int_{\oe } J_{\e} h_{\e } \cdot \eta_{\e} \,\mathrm{d}x - \int_{\Oe }A_{\e}^{\top} \nabla p_\mathrm{b}   \cdot \eta_{\e} \,\mathrm{d}\sigma,
\end{aligned}
\end{align} 
and, almost everywhere in $(0,T)\times \Oe$, we have
\begin{align}
    \div(A_\e w_\e) &= - \partial_t \jeps.
\end{align}
Further, we have the initial condition $(\ueps,\reps)(0) = (\ueps^\mathrm{in},\reps^\mathrm{in})$.
\end{subequations}
\end{definition}

\begin{remark}\label{Rem:Def_Weak_formulation}\
\begin{enumerate}[label = (\roman*)]
\item We note that we can rewrite equation \eqref{eq:weak_w_eps_fixed_domain} in the following way
\begin{align*}
    \e^2 &\int_{\oe}  e_{\e}(\weps ) : A_\e^\top \nabla \eta_{\e} \,\mathrm{d}x - \int_{\oe } \qeps \div(A_{\e }  \eta_{\e}) \,\mathrm{d}x 
    \\
    &= -\e^2 \int_{\oe}  e_{\e}(\partial_t \psieps ) : \Aeps^\top \nabla \eta_{\e} \,\mathrm{d}x +\int_{\oe } J_{\e} h_{\e } \cdot \eta_{\e} \,\mathrm{d}x - \int_{\Oe} A_{\e }^\top\nabla p_\mathrm{b}  \cdot \eta_{\e} \,\mathrm{d}x.
\end{align*}
\item The initial condition for $\ueps$ makes sense since the regularity of $\ueps$, $\jeps$, and $\partial_t (\jeps \ueps)$ imply $\ueps \in C^0([0,T],L^2(\oe))$. In fact, as a consequence of the product rule, we obtain $\partial_t \ueps \in L^2((0,T),H_{\partial \Omega}^1(\Oe))$, which implies the continuity of $\ueps$ as a mapping from $[0,T]$ to $L^2(\Oe)$. However, in the context of homogenisation, it is not appropriate to work with $\partial_t \ueps$ since this function does not fulfill a uniform \textit{a priori} bound with respect to $\e$.

\item From the transformation of surface integrals, see for example \cite[Theorem 1.7-1]{ciarlet1988mathematical}, we first obtain a boundary term of the form
\begin{align*}
    -\rho \int_{\geps} \jeps |\nabla \psieps^{-T}\nu | g(\ueps, \e^{-1} \Reps) \phi \,\mathrm{d}\sigma
\end{align*}
in $\eqref{eq:weakForm:u_eps}$.
However, in our case, we have $|\nabla \psieps^{-T}\nu | = 1$ owing to the choice of the transformation in $\eqref{eq:def:psi_eps}$. For more details, we refer to the \cite[Appendix A]{GahnPop2023Mineral}.

\item\label{Rem:Def_Weak_formulation_Adv} For the derivation of \textit{a priori} estimates for the microscopic solutions, it might be helpful to consider a time-integrated version of the transport equation $\eqref{eq:weakForm:u_eps}$ and choose time-dependent test functions $\phieps(t,x)$. However, under the regularity assumptions in Definition \ref{def:DefinitionWeakSolutionMicroModel}, $\ueps$ is not an admissible test function since the advective term is not well-defined. In fact, the H\"older inequality is not applicable owing to low regularity with respect to the time variable. In the existence proof, where we work with the Galerkin method and, therefore, with more regular approximations, this causes no problems since the advective term cancels out after an integration by parts (where we have to use the boundary and divergence conditions for $\ueps$ and $\weps$), see equation $\eqref{Eq:Adv_Term_Apriori}$ in the proof of the \textit{a priori} estimates.

\item To keep the notation of the weak solution more simple, we only require enough regularity such that the variational equations are well defined (pointwise in time). We will show, see Lemma \ref{LemmaAprioriEstimates} and \ref{LemmaLinftyBoundUeps}, that the microscopic solution is in fact more regular. More precisely, we additionally have
\begin{align*}
    \ueps &\in L^{\infty}((0,T)\times \Oe),
    \\
    \weps &\in L^{\infty}((0,T),H^1_{\geps}(\Oe)^n),
    \\
    \qeps &\in L^{\infty}((0,T),L^2(\Oe)).
\end{align*}
\end{enumerate}
\end{remark}

In what follows, we summarise the assumptions on the data. For $h_{\e}$, we neglect its dependence on the transformation $\psieps$ and we also assume that $f_{\e}$ does not depend explicitly on $t$ and $x$. In this, way we can keep the notation simpler and the focus on the relevant aspects of the paper. Of course, more general assumptions are possible under suitable regularity conditions. In what follows, we use the notion of two-scale convergence, the definition of which can be found in Section \ref{SectionTwoScaleConvergence} in the appendix.

\noindent\textbf{Assumptions on the data:}
\begin{enumerate}[label = (A\arabic*)]
\item We assume that $g\in C^{0,1}(\R^2)$ is bounded. More precisely, there exists a constant $C_g>0$ such that
\begin{align}\label{eq:def:C_g}
|g| \leq C_g,
\end{align}
and a constant $L_g \in  \R$ such that for every $u_1,u_2, r_1,r_2 \in \R$ 
\begin{align}
|g(u_1,r_1) -g(u_2,r_2)|  \leq L_g (|u_1 -u_2| +|r_1-r_2|)
\end{align}
holds.
Moreover, we assume that
\begin{align}\label{eq:g_Bound_below}
g(u,r) \geq 0 \textrm{ if } r\leq \rmin,
\\\label{eq:g_Bound_above}
g(u,r) \leq 0 \textrm{ if } r\geq \rmax
\end{align}
for $u \in \R$ in order to ensure that the radii do not become too small or too large, i.e.~the obstacles do not vanish or touch each other (or the boundary of $Y$).

\item  For the microscopic initial data $\ueps^\mathrm{in}\in L^{\infty}(\oe)$,  a $C > 0$ exists such that 
\begin{align*}
\Vert \ueps^\mathrm{in} \Vert_{L^{\infty}(\oe)} \le C 
\end{align*}
uniformly with resepct to $\e$. 
Further, there exists $u^\mathrm{in} \in L^2(\Omega)$ such that $\ueps^\mathrm{in} \rightwts{2} u^\mathrm{in}$.
\item  \label{AssumptionInitialValueReps} The initial radii $\reps^\mathrm{in} \in L^{\infty}(\Omega)$ are constant on every cell $\e (Y + {\bf k})$, ${\bf k} \in K_{\e}$. Furthermore, there exist $R_{\min}, R_\mathrm{max} \in \R$, $0 < R_\mathrm{min} < R_\mathrm{max} < \frac12$, such that almost everywhere in $\Omega$
\begin{align*}
\e R_{\min} \le \repsin \le \e R_\mathrm{max}.
\end{align*}
We also assume that 
\begin{align*}
    \e^{-1} \reps^\mathrm{in} \rightarrow R_0^\mathrm{in} \qquad \mbox{in } L^2(\Omega).
\end{align*}

\item\label{Assumptionf} For the reaction rate in the transport equation, we assume $f \in C^{0,1}(\R)$. In particular, there holds the following growth condition: 
\begin{align}
    |f(u)|\le C(1 + |u|)
\end{align}
for a constant $C>0$ and for all $u\in \R$.

\item\label{ass:h} For the force term in the Stokes equations, we assume $h_{\e} \in L^{\infty}((0,T),L^2(\Oe))^n$ and $h_{\e} \rightwts{p,2} h_0$ with $h_0 \in L^{\infty}((0,T),L^2(\Omega))^n$ for all $p \in [1,\infty)$.

\item For the external pressure, we assume $p_\mathrm{b} \in L^{\infty}((0,T),H^{\frac12}(\partial \Omega))$. In particular, there exists an extension (we use the same notation) $p_\mathrm{b} \in L^{\infty}((0,T),H^1(\Omega))$.

\item The diffusion coefficient $D\in \R^{n\times n}$ is symmetric and positive definite.
\end{enumerate}

\begin{remark}
Note that the transformation of Problem P$_\e$ would yield $h_\e(t,x) \coloneqq h(t, \psi_\e(t,x))$. For sake of simplicity, we assume that $h_\e$ is a given sequence. Nevertheless, we want to stress
that the following analysis 
also 
holds for $h_\e(t,x) \coloneqq h(t, \psi_\e(t,x))$, 
where 
$h \in L^\infty((0,T),C^{0,1}(\Omega))$. 
Thereby, the Lipschitz regularity with respect to $\Omega$ is only necessary for the proof of Lemma \ref{lem:Lip_L2}.
The two-scale convergence of $ h(t, \psi_\e(t,x)) \to h(t,x)$ was shown in \cite[Theorem 3.8]{AA23}. 
\end{remark}

\subsection{Properties of the domain transformation}
In what follows, we show several $\e$-scaled a priori estimates for the transformation $\psieps$, which was defined in \eqref{eq:def:psi_eps}, as well as its Lipschitz-continuous dependence on the radii. We will employ these properties for proving the existence and uniqueness of a microscopic solution of \eqref{eq:WeakForm:Microproblem} and for the derivation of $\e$-independent a priori estimates of the solution.
Moreover, in Lemma \ref{lem:LemConvData}, we show that several two-scale convergence results for the transformation $\psieps$ follow if $R_\e$ converges. We will use this convergence in order to pass to the two-scale homogenisation limit.
The direct two-scale limit problem as well as the homogenised problem of \ref{lem:LemConvData} contain coefficients which depend on the limit of $\psieps$ for $\e \to 0$. Nevertheless, we show in the Subsection \ref{subsec:BAcktrafoCellProblems} how the cell problems and effective tensors can be transformed back on reference cells, the domains of which depend on their macroscopic position and time. Thus, the homogenised problem and the cell problems become independent of the transformations $\psieps$ and its limit.
Having this in mind, the transformation $\psieps$ is reduced to a tool for proving existence and uniqueness of the microscopic problem, as well as the homogenisation. 
Such diffeomorphisms have been explicitly constructed in \cite{GahnPop2023Mineral} and \cite{WIEDEMANN2023113168}. For the above choice of the transformation, we verify these properties in Lemma \ref{lem:CellTrafo}, Lemma \ref{lem:EstimatesEpsTrafo} and Lemma \ref{lem:LemConvData}.

In order to enhance the understanding of the $\e$-scaled diffeomorphisms $\psieps$, we note that \eqref{eq:def:psi_eps} and \eqref{eq:def:psi} yield
\begin{align}
\begin{aligned}\label{eq:psieps=x+psi}
    \psi_\e(t,x) &= x +\e \psi\left(\e^{-1}\reps(t,x);\{\tfrac{x}{\e}\}\right) - \{\tfrac{x}{\e}\}
\\
&
=\e [\tfrac{x}{\e}] + \e \psi\left(\e^{-1}\reps(t,x);\{\tfrac{x}{\e}\}\right).
\end{aligned}
\end{align}
Thus, it can be seen that $\psieps$ is nothing but a union of diffeomorphisms $\psi\left(\e^{-1}\reps(t,x); \cdot\right)$ for every $\e$-scaled cell. Hence, the bijectivity of $\psi\left(\e^{-1}\reps(t,x); \cdot\right)$ from $Y^*$ onto $Y^*_{\e^{-1}\reps(t,x)}$ can be transfered into the bijectivity of $\psi_\e(t,\cdot)$ from $\Oe$ onto $\Oe(t)$.

Moreover, the $\e$-scaled estimates for $\psieps$ can be obtained from a corresponding results for $\psi\left(\e^{-1}\reps(t,x); \cdot \right)$ which is independent of $R_\e$.

\begin{lemma}[Cell transformation]\label{lem:CellTrafo}
Let $\chi \in C^\infty([0,\infty);[0,1])$ be monotonically decreasing with $\chi(\lambda) = 1$ for $\lambda \leq \rmax$ and $\chi(\lambda) = 0$ for $\lambda \geq (\rmax +0.5) /2$. Then, $\psi(R,\cdot)$ as defined in \eqref{eq:def:psi} is a diffeomorphism from $Y^*$ onto $Y^*_R$ for every
$R \in [\rmin,\rmax]$ such that 
\begin{align*}
\psi(R;Y^*) &= Y_R^* &&\textrm{ for every } R \in [\rmin,\rmax],
\\
\psi(R; x ) &= x &&\textrm{ for every } R \in [\rmin,\rmax], x \in Y \setminus Y_{(\rmax + 0.5)/2}^* ,
\\
\norm{\psi(R; \cdot)}{C^2(\overline{Y})} &\leq C  &&\textrm{ for every } R \in [\rmin,\rmax],
\\
\det(D_x \psi(r;x) ) &\geq c_J &&\textrm{ for every }  R \in [\rmin,\rmax], x \in Y^*,
\\
R &\mapsto \psi(R; \cdot) &&\in C^\infty([\rmin,\rmax], C^2(Y^*)^n)).
\end{align*}
\end{lemma}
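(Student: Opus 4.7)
The plan is to reduce all five assertions to a one-dimensional radial analysis. Since $\psi(R;\cdot)$ is rotationally symmetric about $\m$, writing $y = \m + r\theta$ with $r = \|y-\m\|$ and $\theta \in S^{n-1}$ the definition yields
\begin{equation*}
\psi(R;\m+r\theta) = \m + \tilde r(r)\,\theta, \qquad \tilde r(r) := r + (R-\rmax)\chi(r),
\end{equation*}
so the angular component is preserved and all $R$-dependence sits in the scalar profile $\tilde r$. The key elementary observation is
\begin{equation*}
\tilde r'(r) = 1 + (R-\rmax)\chi'(r) \geq 1,
\end{equation*}
which holds because $R-\rmax \leq 0$ and $\chi'\leq 0$ by monotonicity of $\chi$.

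From $\tilde r'(r)\geq 1$ together with $\tilde r(\rmax) = R$ (since $\chi(\rmax)=1$) and $\tilde r(r) = r$ for $r \geq (\rmax+0.5)/2$ (since $\chi\equiv 0$ there), I conclude that $\tilde r$ is a strictly increasing smooth bijection $[\rmax,\infty)\to[R,\infty)$. Lifting this through the angular identity gives the first assertion $\psi(R;Y^*) = Y_R^*$ with bijectivity; note for well-definedness that on $[\rmax,(\rmax+0.5)/2]$ one has $\tilde r(r)\leq r < 1/2$ so preimages still lie in $Y$. The second assertion $\psi(R;x) = x$ on the complement of $Y^*_{(\rmax+0.5)/2}$ is then an immediate consequence of $\chi$ vanishing in that region.

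For the $C^2$ bound and the smoothness in $R$, the point is that $\psi(R;y) - y = (R-\rmax)\,\Phi(y)$ where $\Phi(y) := \frac{y-\m}{\|y-\m\|}\chi(\|y-\m\|)$ is a fixed $R$-independent map, and $\Phi$ is smooth on $\overline{Y^*}$ because $\|y-\m\|\geq \rmax > 0$ there. Hence $\|\psi(R;\cdot)\|_{C^2(\overline{Y^*})} \leq 1 + |R-\rmax|\,\|\Phi\|_{C^2} \leq 1 + (\rmax-\rmin)\|\Phi\|_{C^2}$ uniformly in $R \in [\rmin,\rmax]$, and the affine dependence $R\mapsto \psi(R;\cdot)$ is trivially $C^\infty$ into $C^2(Y^*)^n$.

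The only place where any real computation appears is the uniform lower bound on the Jacobian determinant. Radial symmetry of $\psi(R;\cdot)$ gives, in polar coordinates,
\begin{equation*}
\det D_y \psi(R;y) = \tilde r'(r)\left(\frac{\tilde r(r)}{r}\right)^{\!n-1}.
\end{equation*}
Using $\tilde r'(r)\geq 1$, it suffices to bound the angular factor from below. For $r \geq (\rmax+0.5)/2$ one has $\tilde r(r)/r = 1$, while for $r \in [\rmax,(\rmax+0.5)/2)$ monotonicity gives $\tilde r(r) \geq \tilde r(\rmax) = R \geq \rmin$, so $\tilde r(r)/r \geq 2\rmin/(\rmax+0.5) > 0$. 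Setting $c_J := \min\{1,\,2\rmin/(\rmax+0.5)\}^{n-1}$ finishes the proof. There is no real obstacle here: the lemma is a collection of explicit verifications whose only subtlety lies in clean bookkeeping of the polar decomposition and the two-case argument for the angular factor.
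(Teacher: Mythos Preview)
Your proof is correct and follows essentially the same radial reduction as the paper: your profile $\tilde r$ is exactly the paper's $\alpha$, and your monotonicity argument $\tilde r'\geq 1$ matches the paper's observation that $\alpha$ is strictly increasing. In fact you go further than the paper, which only spells out the bijectivity and leaves the $C^2$ bound, the Jacobian determinant lower bound, and the $C^\infty$ dependence on $R$ implicit; your explicit polar-coordinate formula $\det D_y\psi = \tilde r'(r)(\tilde r(r)/r)^{n-1}$ and the affine-in-$R$ observation cleanly fill these in.
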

\begin{proof}
After translation, we can assume without loss of generality that $\m=0$ and $Y^{\ast} = \left(-\frac12,\frac12\right) \setminus \overline{B_{R_\mathrm{max}}(0)}$ (and, similarly, we redefine $Y_R^{\ast}$). First, we consider $\psi $ as a function from $\R^n\setminus \{0\}$ into $\R^n$. With $\alpha(\lambda):= \lambda + (R - \rmax ) \chi(\lambda)$, for $\lambda \geq 0$, we can write $\psi $ in the following way for $y \in \R^n\setminus\{0\}$
\begin{align}\label{RepresentationPsi}
    \psi(y) = \alpha(|y|) \dfrac{y}{|y|}.
\end{align}
Since $\chi$ is decreasing and $R-\rmax\leq 0$, we obtain that $\alpha$ is strictly increasing. Hence, with $\alpha(\rmax)= R$ and $\lim_{\lambda \to \infty} \alpha(\lambda) = \infty$, we get that $\alpha:[\rmax,\infty) \rightarrow [R,\infty)$ is bijective. This implies that $\psi: \R^n \setminus {B_{\rmax}(0)} \rightarrow \R^n\setminus {B_R(0)}$ is bijective. In fact, for $z \in \R^n$ with $|z|\geq R$, there exists $\lambda_z \geq \rmax$ such that $\alpha (\lambda_z) = |z|.$ For $y:= \lambda_z \frac{z}{|z|}$, we get from $\eqref{RepresentationPsi}$ that $\psi(y) = z$ and, therefore, $\psi$ is surjective. For $y_1,y_2 \in \R^n \setminus B_{\rmax}(0)$ with $\psi(y_1) = \psi(y_2)$, we obtain (taking the absolute values), using again $\eqref{RepresentationPsi}$, that $\alpha(|y_1|) = \alpha(|y_2|)$, and the bijectivity of $\alpha$ gives $|y_1|=|y_2|$. In particular, this implies $y_1 = y_2$, so $\psi: \R^n \setminus {B_{\rmax}(0)} \rightarrow \R^n\setminus {B_R(0)}$ is injective. Finally, since $\psi(y) = y $ for $y\geq \frac12 (\rmax + 0.5)$, we obtain that $\psi: Y^{\ast} \rightarrow Y_R^{\ast}$ is bijective.
\end{proof}
We note that the construction of $\psi(R;\cdot)$ by \eqref{eq:def:psi} does in fact not give a diffeomorphism from $Y$ onto $Y$. In \cite{WIEDEMANN2023113168}, a diffeomorphism with the same properties from $Y$ onto $Y$ was constructed. 
Further, the corresponding cell displacement $\widecheck{\psi}(R, y)$ vanishes at $\partial Y$ and, thus, $\widecheck{\psi}(R; \cdot) \in C^\infty_{\per}(\overline{Y^*};\R^n)$ for all $R \in[\rmin, \rmax]$. 
Next, we transfer the radius-independent estimates of Lemma \ref{lem:CellTrafo} into $\e$-uniform estimates for $\psi_\e$ with the following lemma.
\begin{lemma}\label{lem:EstimatesEpsTrafo}
Let $\reps \in W^{1,\infty}((0,T),L^2(\oe))\cap L^{\infty}((0,T)\times \oe)$ be constant on every micro cell with $\|\partial_t \reps\|_{L^{\infty}((0,T)\times \oe)} \leq C_g$ and $\e^{-1} \reps \in [\rmin, \rmax]$. Let $\psi_\e$ be defined via \eqref{eq:def:psi_eps}
with its Jacobian matrix $F_\e = \partial \psieps = \nabla \psieps^\top$ and Jacobian determinant $J = \det(\Feps)$. Moreover, let $\Deps, \Aeps$ be given by \eqref{def:Deps}, \eqref{def:Aeps}. 
Then, there exists constants $C, \alpha >0$ such that  almost everwhere in $(0,T)$
\begin{align}\label{eq:psi-id}
&\e \norm{\psi_\e - \operatorname{id}_{\Oe}}{L^\infty(\oe)} + \norm{\Feps}{L^\infty(\oe)} + \norm{A_\e}{L^\infty(\oe)}   \leq C,
\\
&
\e \norm{\partial_x \Feps}{L^\infty(\oe)} 
+ \e \norm{D_x J_\e}{L^\infty(\oe)} +  \e \norm{\partial_x A_\e}{L^\infty(\oe)}  \leq  C,
\\\label{eq:dt_psi<C}
&\e^{-1} \norm{\partial_t \psi_\e}{L^\infty(\Oe) }+ \norm{\partial_x \partial_t \psi_\e}{L^\infty(\Oe) }+ \norm{\partial_t J_\e}{L^\infty(\Oe) } \leq C,
\\\label{eq:J>c_J}
&J_\e(t,x) \geq c_J,
\\
\label{eq:coercivityFeps}
&\zeta^\top D_\e\zeta  = \zeta^\top J_\e F_\e^{-1}  D F_\e^{-\top} \zeta \geq \alpha \norm{\zeta}{}^2  \textrm{ for all }\zeta \in \R^n.
\end{align}
Moreover, we obtain for every $v \in H^1_{\Ge}(\Oe)$
\begin{align}
\label{eq:Sym-coercivityFeps}
&(\e^2 e_\e(v), \Aeps^\top \nabla v) \geq \alpha \norm{\nabla v}{L^2(\Oe)}^2,
\end{align}
\begin{align}\label{eq:Inf-Sup}
\inf\limits_{q \in L^2(\Oe)} \sup\limits_{v \in H^1_\Ge(\Oe)} \frac{(q, \div(\Aeps v))_{\Oe}}{\e\norm{\nabla v}{L^2(\Oe)}\norm{q}{L^2(\Oe)}} \geq \beta.
\end{align}
\end{lemma}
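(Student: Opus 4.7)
The plan is to reduce every claim to the cell-level facts of Lemma \ref{lem:CellTrafo} via the representation
\begin{align*}
\psi_\e(t,x) = \e\left[\tfrac{x}{\e}\right] + \e\, \psi\!\left(\e^{-1}R_\e(t,x);\{\tfrac{x}{\e}\}\right),
\end{align*}
which exhibits $\psi_\e$ as a cell-wise assembly of the fixed family $\psi(R;\cdot)$. All bounds in \eqref{eq:psi-id}--\eqref{eq:J>c_J} follow by the chain rule and by tracking the factor $\e^{-1}$ that arises whenever we differentiate $\{x/\e\}$. Concretely, $F_\e(t,x) = (\partial_y\psi)(\e^{-1}R_\e;\{x/\e\})$ is bounded since $\psi(\cdot;\cdot) \in C^2$ uniformly in $R\in[\rmin,\rmax]$; second spatial derivatives pick up an $\e^{-1}$ from the rescaling, giving $\e\|\partial_x F_\e\|_{L^\infty}\leq C$. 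Similarly, $J_\e = \det F_\e$ inherits the lower bound $c_J$ from Lemma~\ref{lem:CellTrafo}, and $A_\e = J_\e F_\e^{-1}$ is bounded in $L^\infty$ with the correct $\e$-scaling for its derivatives. For the time derivative, chain rule gives $\partial_t \psi_\e = (\partial_R\psi)(\e^{-1}R_\e;\{x/\e\})\,\partial_t R_\e$; since $\partial_R\psi$ is smooth in $R$ and $\partial_t R_\e$ is controlled in $L^\infty$, the estimate \eqref{eq:dt_psi<C} follows (the factor $\e^{-1}$ in the Jacobian of $R\mapsto \e^{-1}R_\e$ cancels the prefactor $\e$ in $\psi_\e$).

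The coercivity \eqref{eq:coercivityFeps} of $D_\e=J_\e F_\e^{-1} D F_\e^{-\top}$ is straightforward: for $\zeta\in\R^n$, write $\zeta^\top D_\e\zeta = J_\e (F_\e^{-\top}\zeta)^\top D\, (F_\e^{-\top}\zeta)$, and combine the positive definiteness of $D$ with $J_\e\geq c_J$ and $\|F_\e\|_{L^\infty}\leq C$ (which yields $|F_\e^{-\top}\zeta|\geq C^{-1}|\zeta|$).

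For \eqref{eq:Sym-coercivityFeps}, the algebraic identity to exploit is that $e_\e(v)$ is the symmetric part of $F_\e^{-\top}\nabla v$, so by the identity $M : S = \operatorname{sym}(M): S$ whenever $S$ is symmetric,
\begin{align*}
\int_{\Oe} e_\e(v): A_\e^\top \nabla v \,\mathrm{d}x = \int_{\Oe} J_\e\, e_\e(v):(F_\e^{-\top}\nabla v)\,\mathrm{d}x = \int_{\Oe} J_\e |e_\e(v)|^2\,\mathrm{d}x.
\end{align*}
The main step is then a Korn-type inequality for $e_\e$. The natural route is to push forward $v$ by $\psi_\e$ to $\tilde v \in H^1_{\Ge(t)}(\Oe(t))^n$; a direct computation shows $e(\tilde v)\circ\psi_\e = e_\e(v)$, so after the change of variables the right-hand side above becomes $\|e(\tilde v)\|_{L^2(\Oe(t))}^2$. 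Standard Korn on perforated domains with Dirichlet conditions on $\Ge(t)$ (with a constant uniform in $\e$, see e.g.~\cite{wiedemann2021homogenisation}) yields $\|\nabla_y\tilde v\|_{L^2(\Oe(t))}^2\leq C\|e(\tilde v)\|_{L^2(\Oe(t))}^2$, and transforming back, together with $\|F_\e\|_{L^\infty}\leq C$ and $J_\e\geq c_J$, controls $\|\nabla v\|_{L^2(\Oe)}^2$. The delicate point that I expect to be the main obstacle is precisely the $\e$-uniformity of Korn on the evolving perforated geometry; since $\psi_\e$ has $\e$-uniform $C^1$ bounds and $\Oe(t)$ is a bi-Lipschitz image of the reference perforation, one can transfer Korn from $\Oe$ to $\Oe(t)$ without losing $\e$-uniformity.

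The inf-sup \eqref{eq:Inf-Sup} is handled by the same push-forward. The key algebraic fact is
\begin{align*}
\int_{\Oe} q\,\div(A_\e v)\,\mathrm{d}x = \int_{\Oe} q\, J_\e \div_y \tilde v\,\mathrm{d}x = \int_{\Oe(t)} \tilde q\, \div_y \tilde v\,\mathrm{d}y,
\end{align*}
where $\tilde q = q\circ\psi_\e^{-1}$ and $\tilde v = v\circ\psi_\e^{-1}$; this identity follows from the formula $A_\e:\nabla v = J_\e \div_y\tilde v$ together with the change of variables $\mathrm{d}y = J_\e\,\mathrm{d}x$. The classical Bogovskiĭ/Tartar construction on $\e$-periodically perforated domains gives an inf-sup constant $\beta_0>0$ of the form $\inf_{\tilde q}\sup_{\tilde v}(\tilde q,\div_y\tilde v)/(\e\|\nabla_y\tilde v\|\,\|\tilde q\|)\geq \beta_0$ on $\Oe(t)$, uniformly in $\e$; transferring norms back via $c_J\leq J_\e\leq C$ and $\|F_\e^{\pm 1}\|_{L^\infty}\leq C$ yields \eqref{eq:Inf-Sup} with a possibly smaller $\beta>0$, still independent of $\e$ and $t$.
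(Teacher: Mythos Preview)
Your proposal is correct and follows essentially the same route as the paper's proof, which simply defers to Lemma~\ref{lem:CellTrafo} for the scaling estimates and to \cite{AA23,wiedemann2021homogenisation} for the coercivity and inf--sup statements; you have unpacked precisely the push-forward arguments (Korn for $e_\e$, Piola/Bogovski\u{\i} for the divergence) that those references contain.
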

\begin{proof}
The estimates \eqref{eq:psi-id}--\eqref{eq:dt_psi<C} follow by elementary calculations via scaling the estimates from Lemma \ref{lem:CellTrafo}, see also \cite{GahnPop2023Mineral,WIEDEMANN2023113168} for more details. The coercivity \eqref{eq:coercivityFeps} can be concluded from \eqref{eq:psi-id} and \eqref{eq:J>c_J} as in \cite[Proposition~4.1]{AA23}. The coercivity \eqref{eq:Sym-coercivityFeps} as well as the inf--sup condition \eqref{eq:Inf-Sup} can be concluded as in \cite{wiedemann2021homogenisation}.
\end{proof}

\begin{lemma}\label{lem:EstimateShiftsReps}  
Let $\Reps^1, \Reps^2$ fulfill the assumptions of Lemma \ref{lem:EstimatesEpsTrafo} and let $\psieps^i$, $i\in \{1,2\}$, be given by \eqref{eq:def:psi_eps} and $\Feps^i, \Aeps^i, \Deps^i$, accordingly.
Then, the following estimates  hold almost everywhere in $(0,T)$:
\begin{align*}
&\e^{-1}\norm{\psieps^1 -\psieps^1}{L^\infty(\Oe)} 
+
\norm{\Feps^1 -\Feps^2}{L^\infty(\Oe)} \leq \e^{-1} C\norm{R_\e^1 -\Reps^2}{L^\infty(\Oe)},
\\
& \norm{\Aeps^1 -\Aeps^2}{L^\infty(\Oe)}+
\norm{\Deps^1 -\Deps^2}{L^\infty(\Oe)}
+
\norm{\Jeps^1 -\Jeps^2}{L^\infty(\Oe)}\leq  \e^{-1}C
\norm{\Reps^1 -\Reps^2}{L^\infty(\Oe)} ,
\\
&\e\norm{\partial_x (\Aeps^1 -\Aeps^2)}{L^\infty(\Oe)}\leq \e^{-1}C\norm{\Reps^1 -  \Reps^2}{L^\infty(\Oe)},
\\
& \e^{-1}\norm{\partial_t \psieps^1 -\partial_t \psieps^2}{L^2( \Oe)} \leq  \e^{-1}C\norm{\Reps^1 -\Reps^2}{L^2(\Oe)}  +  \e^{-1}C\norm{\partial_t(\Reps^1 -\Reps^2)}{L^2(\Oe)} ,
\\
&\norm{\nabla\partial_t  \psieps^1 -\nabla \partial_t \psieps^2}{L^2(\Oe)}\leq \e^{-1}C\norm{\Reps^1 -\Reps^2}{L^2(\Oe)}  + \e^{-1} C\norm{\partial_t(\Reps^1 -\Reps^2)}{L^2(\Oe)} ,
\\
& \norm{\partial_t  \Jeps^1 -\partial_t \Jeps^2}{L^2(\Oe)}\leq \e^{-1} C\norm{\Reps^1 -\Reps^2}{L^2(\Oe)}  +\e^{-1} C\norm{\partial_t(\Reps^1 -\Reps^2)}{L^2(\Oe)} .
\end{align*}
\end{lemma}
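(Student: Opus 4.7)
The plan is to reduce every estimate to a computation involving the explicit representation \eqref{eq:psieps=x+psi}, which concentrates the entire dependence on $R_\e^i$ into the cell map $\psi(R;\cdot)$ from Lemma \ref{lem:CellTrafo}. The decisive observation is that $R_\e^i$ enters $\psi_\e^i$ only through the combination $\e^{-1} R_\e^i$, so every Lipschitz constant taken in the $R$-slot of $\psi(R;\cdot)$ will produce a factor $\e^{-1}$ in the resulting estimate. The smoothness $R \mapsto \psi(R;\cdot) \in C^\infty([\rmin,\rmax]; C^2(Y^*)^n)$ will provide uniform Lipschitz bounds for $\psi$, $\partial_y\psi$, $\partial_y^2\psi$, $\partial_R\psi$ and $\partial_R\partial_y\psi$, while the lower bound $J_\e \geq c_J$ from \eqref{eq:J>c_J} guarantees that $F_\e^{-1}$, $A_\e$ and $D_\e$ depend smoothly on $F_\e$. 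Because $R_\e^i$ is piecewise constant on $\e$-cells and $\widecheck{\psi}(R;\cdot)$ vanishes near $\partial Y$, the pointwise formulas below will be valid throughout $\Oe$ without interface issues.

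For the spatial quantities, I would first evaluate \eqref{eq:psieps=x+psi} at a point $x \in \Oe$ to obtain
\begin{align*}
\psi_\e^1(t,x) - \psi_\e^2(t,x) = \e\bigl[\psi(\e^{-1} R_\e^1(t,x); y) - \psi(\e^{-1} R_\e^2(t,x); y)\bigr]_{y=\{x/\e\}},
\end{align*}
and apply the mean value theorem to get $\norm{\psi_\e^1 - \psi_\e^2}{L^\infty(\Oe)} \leq C\norm{R_\e^1 - R_\e^2}{L^\infty(\Oe)}$, which rearranges to the first stated bound. For $F_\e^i = \partial\psi_\e^i$, the chain rule inside each cell gives $F_\e^i(t,x) = [\partial_y \psi(\e^{-1} R_\e^i(t,x); y)]_{y=\{x/\e\}}$, and Lipschitz continuity of $R \mapsto \partial_y \psi(R;\cdot)$ contributes the extra $\e^{-1}$. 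The estimates for $J_\e$, $A_\e$ and $D_\e$ then follow since each is a smooth function of $F_\e$ on the set where $J_\e \geq c_J$. The bound on $\partial_x A_\e^1 - \partial_x A_\e^2$ is obtained by differentiating once more in $x$, producing a further $\e^{-1}$ from the chain rule that matches the $\e$ prefactor on the left of the stated inequality.

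For the time-derivative quantities, I would differentiate \eqref{eq:psieps=x+psi} in $t$ to obtain $\partial_t \psi_\e^i = \partial_R \psi(\e^{-1} R_\e^i; \{x/\e\})\,\partial_t R_\e^i$, where the outer $\e$ and the inner $\e^{-1}$ cancel. The telescoping identity
\begin{align*}
\partial_t \psi_\e^1 - \partial_t \psi_\e^2
&= \partial_R \psi(\e^{-1} R_\e^1; \{x/\e\})\,\partial_t(R_\e^1 - R_\e^2) \\
&\quad+ \bigl[\partial_R \psi(\e^{-1} R_\e^1; \{x/\e\}) - \partial_R \psi(\e^{-1} R_\e^2; \{x/\e\})\bigr]\,\partial_t R_\e^2
\end{align*}
will then produce the two summands on the right-hand side after using $\norm{\partial_R \psi}{L^\infty} \leq C$, Lipschitz continuity of $R \mapsto \partial_R \psi(R;\cdot)$ (yielding the $\e^{-1}$ factor), and the uniform bound $|\partial_t R_\e^2| \leq C_g$. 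The estimates for $\nabla \partial_t \psi_\e^1 - \nabla \partial_t \psi_\e^2$ and $\partial_t J_\e^1 - \partial_t J_\e^2$ follow by the same splitting: for the former, differentiating in $x$ costs an additional $\e^{-1}$; for the latter, the Jacobi formula $\partial_t J_\e = J_\e\,\mathrm{tr}(F_\e^{-1}\partial_t F_\e)$ reduces the problem to the already-controlled Lipschitz dependence on $(R_\e, \partial_t R_\e)$.

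The only substantive obstacle is the careful bookkeeping of the various $\e^{-1}$ factors, which arise from two distinct sources: the $R \mapsto \e^{-1} R_\e$ composition (scaling Lipschitz constants in the $R$-slot) and the $x \mapsto \{x/\e\}$ composition in spatial derivatives (producing $\e^{-1}$ via the chain rule). Once the origin of each $\e^{-1}$ is identified, the estimates reduce to routine consequences of Lemma \ref{lem:CellTrafo} and the uniform bounds in Lemma \ref{lem:EstimatesEpsTrafo}.
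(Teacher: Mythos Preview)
Your proposal is correct and follows essentially the same approach as the paper: both arguments reduce to the $C^\infty$-dependence $R\mapsto\psi(R;\cdot)$ from Lemma~\ref{lem:CellTrafo}, transfer the resulting Lipschitz bounds to $\psi_\e$ via the rescaling \eqref{eq:psieps=x+psi}, and invoke $J_\e\geq c_J$ for the quantities involving $F_\e^{-1}$. Your write-up is in fact more detailed than the paper's brief sketch; the only refinement the paper adds is the remark that for $A_\e$ specifically the lower bound on $J_\e$ is not needed, since $A_\e=\operatorname{adj}(F_\e)$ is a polynomial in the entries of $F_\e$.
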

\begin{proof}
We note that Lemma \ref{lem:CellTrafo} provides  $R \mapsto \psi(R; \cdot) \in C^\infty([\rmin,\rmax], C^2(Y^*)^n))$. 
Thus,  $\psi$ as well as all its derivatives depend Lipschitz continuously on $R$. After rescaling, this can be carried over to $\psieps$ via \eqref{eq:psieps=x+psi}, which yields the Lipschitz estimates above. For the Lipschitz estimate of $D_\e$, the Lipschitz estimate of $F^{-1}_\e$ is used in addition. This, estimate can be obtained by the Lipschitz estimate of $F^{-1}_\e$ and the uniform bound $J_\e \geq c_J$ from below. 
We note that, for the Lipschitz estimate of $A_\e = J_\e F_\e^{-1}$, this is not necessary since $A_\e$ is the adjugate of $F_\e$, and, thus, is nothing but a polynomial with respect to the entries of $F_\e$.
\end{proof}

\section{Main results and macroscopic model}
\label{SectionMainResults}

Let us summarise the main results of the paper in two theorems, which will be proved in the subsequent sections. The first is the existence and uniqueness of a weak microscopic solution of the problem $\eqref{MicroModelFixedDomain}$:

\begin{theorem}\label{MainThmExistenceUniqueness}
There exists a unique weak solution of the microscopic problem $\eqref{MicroModelFixedDomain}$.
\end{theorem}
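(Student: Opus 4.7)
Following the outline in the introduction, the plan is to apply Schaefer's fixed point theorem to the composite operator $\mathcal{T} = \mathcal{T}_3 \circ \mathcal{T}_2 \circ \mathcal{T}_1$ acting on a suitable ball in a Banach space of concentrations such as $L^2((0,T);L^2(\Oe))$. Here $\mathcal{T}_1$ sends $\ueps$ to the radius field $\reps$ by solving the cell-wise ODE \eqref{eq:StrongFormR:Transformed}; $\mathcal{T}_2$ sends $\reps$ to the Stokes pair $(\weps,\qeps)$ via the transformed system \eqref{eq:weak_w_eps_fixed_domain}; and $\mathcal{T}_3$ sends the pair $(\reps,\weps)$ (together with $\ueps$, entering through a truncation of the nonlinear reaction terms) to a new concentration solving the transformed transport equation \eqref{eq:weakForm:u_eps}. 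A fixed point of $\mathcal{T}$ is then a weak solution in the sense of Definition \ref{def:DefinitionWeakSolutionMicroModel}. The delicate point, as the introduction emphasises, is the choice of function spaces: each $\mathcal{T}_i$ must map continuously into a space strong enough for the Lipschitz estimates of Lemma \ref{lem:EstimateShiftsReps} to be usable at the next step.

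\paragraph{Well-posedness of the three steps.} For $\mathcal{T}_1$, the ODE decouples on each microcell and, by the Lipschitz boundedness of $g$ and the sign conditions \eqref{eq:g_Bound_below}--\eqref{eq:g_Bound_above}, yields a unique $\reps \in W^{1,\infty}((0,T);L^\infty(\Oe))$ with $\e^{-1}\reps \in [\rmin,\rmax]$, so the hypotheses of Lemma \ref{lem:EstimatesEpsTrafo} are met. For $\mathcal{T}_2$, the pointwise-in-time Stokes problem has coefficients controlled by Lemma \ref{lem:EstimatesEpsTrafo}; the coercivity \eqref{eq:Sym-coercivityFeps} together with the inf--sup condition \eqref{eq:Inf-Sup} provide a unique solution $(\weps,\qeps)$ by standard saddle-point theory. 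For $\mathcal{T}_3$, I would use a Galerkin approximation in $H^1_{\dO}(\Oe)$ of the linear parabolic problem and derive an energy estimate by testing with the approximation itself; the advective term is treated using the divergence identity $\div(\Aeps\weps) = -\partial_t\Jeps$ and the boundary condition $\weps = \partial_t\psieps$ on $\Ge$, which reduce it after integration by parts to a term of the form $\tfrac12\int_{\Oe}\partial_t\Jeps\, \tilde\ueps^2\,\mathrm{d}x$ that is controlled via \eqref{eq:dt_psi<C}. A Grönwall argument then closes the estimate and yields $\tilde\ueps \in L^2((0,T);H^1_{\dO}(\Oe))$ with $\partial_t(\Jeps\tilde\ueps) \in L^2((0,T);H^1_{\dO}(\Oe)')$.

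\paragraph{Fixed-point argument.} To apply Schaefer's theorem I need continuity and compactness of $\mathcal{T}$ together with a uniform bound on the set of fixed points of $\lambda\mathcal{T}$ for $\lambda \in [0,1]$. Continuity is obtained step by step: Lipschitz dependence of $\reps$ on $\ueps$ via Grönwall on the ODE, Lipschitz dependence of $(\weps,\qeps)$ on $\reps$ via the Stokes estimates combined with Lemma \ref{lem:EstimateShiftsReps}, and continuous dependence of $\tilde\ueps$ on $(\reps,\weps)$ via an energy estimate for the transport equation. Compactness follows from the gain of regularity in $\mathcal{T}_3$: the bound on $\partial_t(\Jeps\tilde\ueps)$ combined with $\Jeps \ge c_J > 0$ and an Aubin--Lions-type lemma yields strong compactness in $L^2((0,T)\times\Oe)$. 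The uniform a priori bound on fixed points of $\lambda\mathcal{T}$ again follows from the energy estimate applied with the $\lambda$-scaling. A crucial companion result, referenced as Lemma \ref{LemmaLinftyBoundUeps}, is an $L^\infty$ bound on $\ueps$ obtained by a Stampacchia truncation exploiting the boundedness of $g$ and the linear growth assumption \ref{Assumptionf} on $f$; this legitimates the truncation used inside $\mathcal{T}_3$ and motivates working in $L^\infty$-bounded sets throughout.

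\paragraph{Uniqueness and main obstacle.} For uniqueness, take two solutions and subtract: from the ODE, $\|\reps^1-\reps^2\|$ is controlled by $\|\ueps^1-\ueps^2\|$; from the Stokes estimates together with Lemma \ref{lem:EstimateShiftsReps}, $\|\weps^1-\weps^2\|$ is controlled by $\|\reps^1-\reps^2\|$; and testing the difference of the transport equations with $\ueps^1-\ueps^2$ (legitimate only on a Galerkin or time-mollified level, as noted in Remark~\ref{Rem:Def_Weak_formulation}\ref{Rem:Def_Weak_formulation_Adv}) leads to
\[
\tfrac{d}{dt}\int_{\Oe}\Jeps^1(\ueps^1-\ueps^2)^2\,\mathrm{d}x + c\|\nabla(\ueps^1-\ueps^2)\|_{L^2(\Oe)}^2 \le C\bigl(\|\ueps^1-\ueps^2\|_{L^2}^2+\|\reps^1-\reps^2\|_{L^2}^2+\|\weps^1-\weps^2\|_{L^2}^2\bigr),
\]
so Grönwall closes the loop. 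The main obstacle, both for existence and uniqueness, is the advective term $\int\ueps\Aeps\weps\cdot\nabla\phieps$: under the regularity of Definition \ref{def:DefinitionWeakSolutionMicroModel} it is not well-defined with $\phieps = \ueps$, so one must work on the Galerkin level, exploit the cancellation coming from the divergence and boundary conditions for $\weps$, and only afterwards pass to the limit. This is also the reason for the subtle choice of function spaces for the three operators highlighted at the outset.
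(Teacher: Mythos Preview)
Your overall architecture matches the paper's: Schaefer's fixed-point theorem applied to the composition of the three solution operators (ODE, Stokes, transport), with compactness via Aubin--Lions and uniqueness via energy estimates plus the Lipschitz continuity of the sub-operators. So the strategy is right.

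There is, however, one concrete gap. The fixed-point space you propose, $L^2((0,T);L^2(\Oe))$, does not carry enough regularity for your operator $\mathcal{T}_1$ to be defined: the right-hand side of the ODE \eqref{eq:StrongFormR:Transformed} is a surface average of $g(\ueps,\cdot)$ over $\geps(\e[x/\e])$, so you need a trace of $\ueps$ on $\geps$, and $L^2(\Oe)$ functions have none. The paper's fix is to run Schaefer on $L^2((0,T);H^{\beta}(\Oe))$ for a fixed $\beta\in(\tfrac12,1)$, so that the trace embedding $H^{\beta}(\Oe)\hookrightarrow L^2(\geps)$ makes $\mathcal{F}_1$ well defined. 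This choice is also what makes the compactness step close: $\mathcal{F}_3$ outputs a function in $L^2((0,T);H^1(\Oe))$ with $\partial_t\ueps\in L^2((0,T);H^1_{\partial\Omega}(\Oe)')$ (for fixed $\e$, via the product rule with $\jeps$), and Aubin--Lions gives compactness into $L^2((0,T);H^{\beta}(\Oe))$ precisely because $\beta<1$. You flag that the space choice is ``delicate'', but the specific space you name would fail at step one.

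A smaller point: the paper does not truncate the nonlinearities in the fixed-point construction. In its operator $\mathcal{F}_3$ the terms $f$ and $g$ are simply evaluated at the \emph{frozen} argument $\hat u_\e$ (and $\hat R_\e$), so the auxiliary transport problem is genuinely linear; the $L^\infty$ bound of Lemma~\ref{LemmaLinftyBoundUeps} is proved afterwards for the actual fixed point, not used inside the existence argument. Your truncation idea is not wrong, but it introduces an extra layer you would then have to remove, whereas the linearisation-by-freezing avoids it entirely.
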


In a second step, we show suitable compactness results for the microscopic solutions and pass to the limit $\e \to 0$. We show that the limit functions are a weak solution of a macroscopic model with homogenised coefficients. The triple $(u_0,R_0,p_0)$ with the macroscopic concentration $u_0: (0,T)\times \Omega \rightarrow \R$, the macroscopic function of local radii $R_0:(0,T)\times \Omega \rightarrow [R_\mathrm{min},R_\mathrm{max}]$, and the Darcy pressure $p_0:(0,T)\times \Omega \rightarrow \R$ solves the following problem:
We define the evolving reference element and surface 
\begin{align}
    Y^{\ast}(t,x):= Y^*_{R_0(t,x)}=Y \setminus \overline{B_{R_0(t,x)}(\m)}, \qquad \Gamma(t,x):= \partial B_{R_0(t,x)} (\m),
\end{align} 
which allows to define the porosity $\theta$  via
\begin{align}\label{Def:Porosity}
    \theta(t,x):= | Y^{\ast}(t,x)|.
\end{align}
Between the porosity $\theta$ and the specific surface $\Gamma(t,x)$, we have the relation $\partial_t \theta(t,x) = |\Gamma(t,x)|  \partial_t R_0 $.
The transport equation describing the evolution of $u_0$ is given by 
\begin{subequations}\label{MacroModel}
\begin{align}
\label{MacroTransportPDE}
 \partial_t (\theta u_0) - \nabla \cdot \left( D^{\ast} \nabla u_0 - u_0 v^{\ast}\right) &= \theta f(u_0) + \partial_t \theta \rho &\mbox{ in }& (0,T)\times \Omega,
 \\
 \label{MacroTransportBC}
 u_0 &= 0 &\mbox{ on }& (0,T)\times \partial \Omega,
 \\
 \label{MacroTransportIC}
   (\theta u_0) (0) &= \theta^\mathrm{in} u^\mathrm{in} &\mbox{ in }& \Omega,
\end{align}
with the homogenised diffusion coefficient $D^{\ast}:(0,T)\times \Omega \rightarrow \R^{n\times n}$ defined in \eqref{Def:HomDiffCoeff_fixed} respectively $\eqref{Def:HomDiffCoeff}$, and the Darcy velocity $v^{\ast}:(0,T)\times \Omega \rightarrow \R^n$  given in $\eqref{eq:DarcyLawV}$.
The evolution of the local radii in every macroscopic point, $R_0$, is described by
\begin{align}
    \label{MacroODE:R0}
    \partial_t R_0 &= g(u_0,R_0) &\mbox{ in }& (0,T)\times \Omega,
    \\
    \label{MacroODE_IC}
    R_0(0) &= R^\mathrm{in} &\mbox{ in }& \Omega,
\end{align}
and the Darcy pressure $p_0:(0,T)\times \Omega \rightarrow \R$ is given by 
\begin{align}
\label{DarcyLawPDE}
    - \nabla \cdot \left(K^{\ast}( h_0 - \nabla p_0)\right) &= -\partial_t \theta &\mbox{ in }& (0,T)\times \Omega,
    \\
    \label{DarcyLawBC}
    p_0 &= p_\mathrm{b} &\mbox{ on }& (0,T)\times \partial \Omega,
\end{align}
with the permeability tensor $K^{\ast}:(0,T)\times \Omega \rightarrow \R^{n\times n}$  defined in $\eqref{Def:PermeabilityTensorFixed}$ respectively $\eqref{eq:K:moving_cell}$. We emphasise that following relation between the Darcy velocity $v^{\ast}$ and the Darcy pressure $p_0$ holds:
\begin{align}\label{eq:DarcyLawV}
    v^{\ast} = K^{\ast}(h_0 - \nabla p_0).
\end{align}
\end{subequations}
We note that, in our special situation where the evolving reference elements are perforated with balls, the homogenised diffusion coefficient $D^{\ast}$ and the permeability tensor $K^{\ast}$ are in fact scalar-valued, see Lemma \ref{Lem:Scalar_hom_coeff} in the appendix.

A weak solution of the macroscopic model $\eqref{MacroModel}$ is defined as follows:
\begin{definition}\label{Def:Weak_macro}
We call $(u_0,R_0,p_0)$ a weak solution of the macroscopic model $\eqref{MacroModel}$ if 
\begin{align*}
    u_0 &\in L^2((0,T),H^1_{\partial \Omega} (\Omega)) \quad \mbox{ with } \partial_t (\theta u_0) \in L^2((0,T),H^1_0(\Omega)'),
    \\
    R_0 &\in W^{1,\infty}((0,T),L^2(\Omega)) \cap L^{\infty}((0,T)\times \Omega)  \quad \mbox{ with } \partial_t R_0 \in L^{\infty}((0,T)\times \Omega),
    \\
    p_0  - p_\mathrm{b} &\in L^{\infty}((0,T),H^1_0(\Omega)),
\end{align*}
with $(\theta u_0) (0) = \theta^\mathrm{in} u^\mathrm{in}$ (see Remark \ref{rem:RegularityU0}) such that the following variational equations are fulfilled almost everywhere in $(0,T)$: For all $\phi \in H^1_0(\Omega)$, we have
\begin{align}\label{MacroEqu:VarTransport}
    \langle \partial_t (\theta u_0), \phi \rangle_{H^{-1}(\Omega),H_0^1(\Omega)} + \int_{\Omega} [ D^{\ast}\nabla u_0 - u_0 v^{\ast} ] \cdot \nabla \phi \,\mathrm{d}x = \int_{\Omega} (\theta f + \partial_t \theta \rho)  \phi \,\mathrm{d}x,
\end{align}
while for all $\xi \in H^1_0(\Omega)$, we have
\begin{align*}
   \int_{\Omega} K^{\ast} (h - \nabla p_0) \cdot \nabla \xi \,\mathrm{d}x = \int_{\Omega} -\partial_t \theta \xi \,\mathrm{d}x.
\end{align*}
Furthermore,
\begin{align}\label{eq:limitODE}
    \partial_t R_0 = g(u_0,R_0)
\end{align}
almost everywhere in $(0,T)$.

\end{definition}

\begin{remark}\label{rem:RegularityU0}
We immediately obtain that a weak solution in the sense of Definition \ref{Def:Weak_macro} fulfills $\theta u_0 \in C^0([0,T],L^2(\Omega))$ and, therefore, the initial condition $(\theta u_0) (0) = \theta^\mathrm{in} u^\mathrm{in}$ makes sense.  For $R_0^\mathrm{in} \in H^1(\Omega)$, the spatial regularity of $u_0$ and $R_0^\mathrm{in}$ can be transfered onto the ODE \eqref{eq:StrongFormR:Transformed} and we obtain 
$R_0 \in C([0,T];H^1(\Omega)\cap L^\infty(\Omega))$ and, thus, ${\theta, \theta^{-1} \in C([0,T];H^1(\Omega) \cap L^\infty(\Omega))}$.
Moreover, the uniform $L^\infty((0,T)\times \Omega)$ bound of $u_\e$ yields  $u_0 \in L^\infty((0,T) \times \Omega)$ and, thus, $\nabla (\theta u_0) = \nabla \theta u_0 + \theta \nabla u_0 \in L^2((0,T) \times \Omega)$. Then, we can conclude $\theta u_0 \in C([0,T];L^2(\Omega))$ and, thus, $u_0 = \theta^{-1}\theta u_0  \in C([0,T];L^2(\Omega))$.
Then, the initial condition \eqref{MacroTransportIC} can be replaced by $u_0(0) = u^\mathrm{in}$.
\end{remark}

The second main result of the paper is the convergence of the microscopic solutions to solutions of the macroscopic model just introduced.

\begin{theorem}\label{MainTheoremMacroModel}
Let $(\ueps,\weps, \peps,\reps)$ be the unique weak solution of the microscopic problem $\eqref{MicroModelFixedDomain}$. Then, up to a subsequence, these sequences converge to the limit functions $(u_0,w_0,p_0,R_0)$  in the sense of Proposition \ref{prop:Comapctness}, which is a weak solution of the macroscopic model $\eqref{MacroModel}$ in the sense of Definition \ref{Def:Weak_macro}
\end{theorem}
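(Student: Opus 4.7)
The plan is to carry out the homogenisation limit in three stages: extract compactness from the $\e$-uniform bounds, pass to the limit in each of the three subsystems (transport, ODE, Stokes), and finally rewrite the effective problem on the moving reference cells so that the auxiliary transformation $\psieps$ disappears from the final formulation. First, I would invoke the a priori estimates (established earlier in the paper, namely Lemma \ref{LemmaAprioriEstimates} and Lemma \ref{LemmaLinftyBoundUeps}) which control $\ueps$ in $L^\infty((0,T)\times\Oe)\cap L^2((0,T),H^1(\Oe))$, $\reps$ in $W^{1,\infty}((0,T),L^2(\Oe))\cap L^\infty$, $\weps$ with the appropriate $\e$-scaled gradient bound, and $\qeps$ in $L^\infty((0,T),L^2(\Oe))$. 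Along subsequences, this yields weak two-scale limits $u_0, u_1, w_0, q_0$, where the pressure limit is identified as a macroscopic Darcy pressure $p_0$ by means of the two-pressure decomposition of \cite{allaire1997one}, which separates the macroscopic part of $\qeps$ from $y$-periodic fluctuations.

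The key technical step is strong two-scale compactness. For $\reps$, a standard Kolmogorov argument applies: the uniform bound $|\partial_t\reps|\le C_g$ from \eqref{eq:def:C_g} controls time shifts, and space shifts follow by integrating the ODE \eqref{eq:weakForm:r_eps} using the gradient bound of $\ueps$ and the Lipschitz continuity of $g$. Once $\reps\to R_0$ strongly in $L^2$, Lemma \ref{lem:EstimateShiftsReps} immediately upgrades this to strong two-scale convergence of $\Jeps,\Feps,\Aeps,\Deps,\partial_t\Jeps$ and $\partial_t\psieps$. The subtle point is the strong two-scale convergence of $\ueps$: since only $\partial_t(\Jeps\ueps)$, not $\partial_t\ueps$, is uniformly bounded, I would employ the Kolmogorov–Simon-type argument in the spirit of \cite{GahnPop2023Mineral}. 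Space shifts are controlled by the $L^2((0,T),H^1)$ bound; time shifts are controlled by rewriting $\ueps(t+h)-\ueps(t)$ through $\Jeps\ueps$, exploiting the already-established strong convergence of $\Jeps$ and, crucially, the uniform $L^\infty$ bound of $\ueps$, which is indispensable because the advective term enters the time-derivative estimate.

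Third, I would pass to the limit in each variational identity. In \eqref{eq:weakForm:u_eps}, testing with $\phi(t,x)+\e\phi_1(t,x,x/\e)$, the strong two-scale convergence of $\Deps$ and $\ueps$ together with the weak two-scale convergence of $\nabla\ueps$ and $\weps$ allows to pass to the limit in the nonlinear advective term $\ueps\Aeps\weps$ as a product of a strongly convergent factor with a weakly convergent one. This produces the homogenised equation together with the $y$-cell problem for the corrector $u_1$, whose elimination gives the effective diffusivity $D^{\ast}$ as in \eqref{Def:HomDiffCoeff_fixed}; the surface source is handled by standard two-scale surface convergence using the strong convergence of $\Jeps|_{\Ge}$ inherited from $R_\e$. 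In \eqref{eq:weakForm:r_eps}, the strong convergence of $\ueps$ and $\reps$ combined with the Lipschitz continuity of $g$ immediately yields \eqref{eq:limitODE}, since the $\e$-scaled surface average collapses for integrands that in the limit depend only on $(t,x)$. For the Stokes system, I would follow \cite{wiedemann2021homogenisation,allaire1997one}: after the two-pressure decomposition one obtains a two-pressure Stokes cell problem in the reference cell whose solution matrix defines the permeability $K^{\ast}$, and testing the divergence constraint $\div(\Aeps\weps)=-\partial_t\Jeps$ with a purely macroscopic test function gives the Darcy equation \eqref{DarcyLawPDE} with source $-\partial_t\theta$.

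Finally, I would apply the back-transformation developed in Subsection \ref{subsec:BAcktrafoCellProblems} and the two-scale transformation rules of \cite{AA23,wiedemann2021homogenisation} to rewrite $D^{\ast}$, $v^{\ast}$ and $K^{\ast}$ on the moving cells $Y^\ast(t,x)=Y\setminus\overline{B_{R_0(t,x)}(\m)}$, yielding the transformation-independent formulation \eqref{MacroModel}. Uniqueness of the identification of the limit objects promotes the subsequence convergence to convergence of the whole sequence. I expect the main obstacle to be the strong two-scale convergence of $\ueps$: the absence of a uniform bound on $\partial_t\ueps$ forces one to work with $\partial_t(\Jeps\ueps)$, and converting this into genuine time-shift control for $\ueps$ requires simultaneously the strong convergence of $\Jeps$ (hence of $\reps$) and the $L^\infty$ bound of $\ueps$ — the latter being essential precisely because of the new advective term, which would otherwise destroy the time-derivative estimate that underpins the whole compactness argument.
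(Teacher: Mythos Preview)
Your proposal is correct and follows essentially the same route as the paper: compactness via Proposition \ref{prop:Comapctness} and Proposition \ref{prop:Compactness_dt} (Kolmogorov for $\reps$, Kolmogorov--Simon for $\ueps$ through $\partial_t(\jeps\ueps)$), strong two-scale convergence of the coefficients from Lemma \ref{lem:LemConvData}, passage to the limit with oscillating test functions in each subsystem (two-pressure decomposition for Stokes, corrector test function for the transport equation), and the back-transformation of Subsection \ref{subsec:BAcktrafoCellProblems}. One small correction: the $L^\infty$ bound on $\ueps$ enters earlier than you indicate---it is used in Lemma \ref{LemmaAprioriEstimates} to obtain the uniform bound on $\partial_t(\jeps\ueps)$ itself (the advective term would otherwise only yield an $\e$-dependent estimate), whereas the time-shift argument of Proposition \ref{prop:Compactness_dt} then works with Sobolev embedding rather than the $L^\infty$ bound; and your final claim about whole-sequence convergence is not part of the theorem (which is stated up to a subsequence) and would require uniqueness of the macroscopic model, which the paper does not establish.
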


\section{Existence and uniqueness for the microscopic model} \label{sec:Micromodel:Existence+Uniqueness}

In this section, we show the existence and uniqueness of weak solution $(\ueps,\weps,\peps,\reps)$ of the microscopic model \eqref{eq:weakForm:u_eps}--\eqref{eq:weak_w_eps_fixed_domain}. In order to show the existence of a solution, we employ a fixed-point argument. Thereby, we show not only the continuity of the operator, which is necessary in order to show the existence of a solution, but also its Lipschitz continuity. Thus, we can conclude the uniqueness of the fixed point and, thereby, the uniqueness of the solution of \eqref{eq:weakForm:u_eps}--\eqref{eq:weak_w_eps_fixed_domain}.

\subsection{Construction of the fixed-point operator and its properties}
In order to show the existence of a weak solution of the microscopic model $\eqref{MicroModelFixedDomain}$ in the sense of Definition \ref{def:DefinitionWeakSolutionMicroModel}, we apply a fixed-point argument as follows. 
For $\beta \in \left(\frac12 ,1\right) $, we define the operator  $\mathcal{F}_1: L^2((0,T) , H^{\beta}(\oe) )\rightarrow L^{\infty}((0,T)\times \oe) \cap H^1((0,T),L^{\infty}(\oe))$  by  $\mathcal{F}_1(\hueps) := \reps$, where $\reps $ solves the problem
\begin{subequations}\label{ODELinearization}
\begin{align}\label{eq:def:F1} 
    \partial_t \reps &=\e \fint_{\geps\left(\e \bfxe\right)} g\left(\hueps(t,z),\frac{\reps(t,z)}{\e}\right) \,\mathrm{d}\sigma_z    &\mbox{ in }& (0,T)\times \oe,
    \\\label{eq:def:F1_initial} 
    \reps(0) &= \reps^\mathrm{in} &\mbox{ in }& \oe.
\end{align}
\end{subequations}
In Lemma \ref{LemmaF1welldefined}, we will see that the operator is well-defined. In particular, $\reps$ is constant on every microcell $\e (Y^{\ast} + k)$ for $k \in K_{\e}$ with values in $[R_\mathrm{min},R_\mathrm{max}]$, and the time derivative is essentially bounded. 

For the linearization of the Stokes problem, we first introduce the set 
\begin{align*}
    Y_{\e}:= \big\{ \reps& \in L^{\infty}((0,T)\times \oe) \cap H^1((0,T),L^{\infty}(\oe)) \, : 
    \\
    &\|\partial_t \reps\|_{L^{\infty}((0,T)\times \Oe)} \le \e C_g, \reps|_{\e(Y^{\ast} + k)} \mbox{ constant for every } k \in K_{\e}, 
    \\
    &\e^{-1} \reps(t,x) \in [R_\mathrm{min},R_\mathrm{max}] \mbox{ for almost every } (t,x) \in (0,T)\times \oe\big\}.
\end{align*}
Note that solutions of $\eqref{ODELinearization}$ are actually elements of $Y_{\e}$ as shown in Lemma \ref{LemmaF1welldefined} below. Now,  we define $\mathcal{F}_2: Y_\e \rightarrow L^2((0,T);H^1_\Ge(\Oe)^n)$  as the solution of the linearised Stokes problem, i.e.\ $\mathcal{F}_2(\hreps):= \weps$, where  $\weps$ is the unique weak solution of 
\begin{align}\label{eq:def:F2}
\begin{aligned}
    -\e^2 \div(\hAeps \heeps(w_\e)) + \hAeps^\top \nabla \qeps  &= \hJeps\hheps +\e^2 \div( \heeps(\partial_t \psieps)) - \hAeps^\top \nabla p_\mathrm{b} &\mbox{ in }& (0,T)\times \oe,
    \\
    \nabla \cdot (\hAeps \veps ) &= - \nabla \cdot (\hAeps \partial_t \hpsieps )  &\mbox{ in }& (0,T)\times \oe,
    \\
    \veps &= -\partial_t \hReps \nu &\mbox{ on }& (0,T)\times \geps,
    \\
    \left(-\e^2 \heeps(w_\e)  +  \qeps I    \right) \hAeps^\top\nu &=  \e^2 \heeps(\partial_t \hpsieps) \hAeps^\top\nu  &\mbox{ on }& (0,T)\times \partial \Omega,
\end{aligned}
\end{align}
where $\heeps, \hAeps, \hFeps, \hJeps$ and $\widehat{h}_\e$ are defined in the same way as $e_\e, \Aeps, \Feps, \Jeps$ and $h_\e$ but with $\hReps$ instead of $\Reps$.
We emphasise that the operator is well defined since $\hreps$ is constant on every micro-cell and, therefore, the trace of $\partial_t \hreps$ on $\geps$ exists and is an element of $L^{\infty}((0,T)\times \geps)$.
Moreover, we note that the velocity fields which solve  \eqref{eq:def:F2} are elements of $Z_\e$, where
\begin{align}
\begin{aligned}\label{Def:Z_eps}
Z_\e \coloneqq \{(\reps , \weps ) &\in Y_{\e} \times L^2((0,T),H^1_{\geps}(\oe)^n)) \, : 
\\
&\, \nabla \cdot (A_{\e} (\weps + \partial_t \psieps)) = 0 , \|\weps\|_{L^\infty((0,T);H^1_{\geps}(\oe)^n))}  \leq C_h\}
\end{aligned}
\end{align}
where $C_h$ depends on $C_g$ and $h$. The existence of such an upper bound $C_h$ is given by Lemma \ref{lem:Stokes} below.

Finally, we define the operator 
\begin{align*}
  \mathcal{F}_3:  Z_\e  \times L^2((0,T),H^{\beta}(\Oe)) \rightarrow L^2((0,T),H^{\beta}(\Oe))
\end{align*}
 via $\mathcal{F}_3(\hreps,\hweps,\hueps):= \ueps$, where $\ueps$ is a weak solution of
\begin{align}\label{eq:def:F3}
\begin{aligned}
    \partial_t (\hjeps \ueps) - \nabla \cdot \left( \hDeps \nabla \ueps  - \ueps \hAeps \hweps  \right) &= \hjeps f(\hueps) &\mbox{ in }& (0,T)\times \oe,
    \\
    -(\hDeps \nabla \ueps - \ueps\hAeps \hweps) \cdot \nu &= \rho\hjeps g(\hueps,\e^{-1}\hreps)  &\mbox{ on }& (0,T)\times \geps,
    \\
    \ueps &= 0 &\mbox{ on }& (0,T)\times  \partial \Omega,
    \\
   \ueps(0)&= \uepsin &\mbox{ in }& \oe,
\end{aligned}
\end{align}
which is analogously defined as in \eqref{eq:weakForm:u_eps}.
Here, the data $\hjeps$, $\widehat{D}_{\e}$ and $\hweps$ are again defined in the same way as $\jeps$, $D_{\e}$ and   $\weps$ with $\hReps$ instead of $\Reps$.  Now, we are able to define the fixed-point operator for the whole system:
\begin{align*}
    \mathcal{F}: L^2((0,T),H^{\beta}(\oe)) \rightarrow L^2((0,T),H^{\beta}(\oe)),
\qquad \mathcal{F}(\hueps) := \mathcal{F}_3\left(\mathcal{F}_1(\hueps),\mathcal{F}_2\left(\mathcal{F}_1(\hueps)\right),\hueps\right).
\end{align*}

In what follows, we show that the operators $\mathcal{F}_i$, $i=1,2,3$, are well defined, and, therefore, so is the fixed-point operator $\mathcal{F}$.

\begin{lemma}\label{LemmaF1welldefined}
The operator $\mathcal{F}_1$ is well defined and the range of $\mathcal{F}_1$ is in $Y_{\e}$.
\end{lemma}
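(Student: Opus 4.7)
The strategy is to observe that problem \eqref{ODELinearization} decouples into a family of scalar ODEs indexed by the microcells, apply Carathéodory's theorem cell by cell, and use the sign assumptions on $g$ to enforce the radius confinement.

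First, since the surface $\geps(\e[x/\e])$ appearing in \eqref{eq:def:F1} depends on $x$ only through the integer part $[x/\e]$, and the initial datum $\reps^{\mathrm{in}}$ is constant on every cell $\e(k+Y^*)$ by \ref{AssumptionInitialValueReps}, I look for $\reps$ of the form $\reps(t,x)=\reps^k(t)$ on $\e(k+Y^*)$, $k\in K_\e$. Any solution will necessarily have this structure because the right-hand side of \eqref{eq:def:F1} is then itself constant on every cell. Problem \eqref{ODELinearization} therefore reduces, for every $k\in K_\e$, to the scalar Cauchy problem
\[
\dot{\reps^k}(t) \;=\; \e \fint_{\geps(\e k)} g\bigl(\hueps(t,z),\, \reps^k(t)/\e\bigr)\,\mathrm{d}\sigma_z \;=:\; F_k\bigl(t,\reps^k(t)\bigr),\qquad \reps^k(0)=\reps^{\mathrm{in}}\big|_{\e(k+Y^*)}.
\]

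For each $k$, the map $t \mapsto F_k(t,r)$ is measurable: the trace $\hueps(t,\cdot)\big|_{\geps(\e k)}$ is a well-defined element of $L^2(\geps(\e k))$ for a.e.~$t$ since $\beta>1/2$, and composition with the continuous function $g$ followed by integration over a fixed surface preserves measurability. The boundedness $|g|\le C_g$ from \eqref{eq:def:C_g} yields $|F_k(t,r)|\le \e C_g$, while the Lipschitz continuity of $g$ makes $r\mapsto F_k(t,r)$ globally $L_g$-Lipschitz uniformly in $t$. Carathéodory's theorem then provides a unique absolutely continuous $\reps^k\colon[0,T]\to\R$ solving the scalar problem, with $\|\dot{\reps^k}\|_{L^\infty(0,T)}\le \e C_g$.

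The main delicate step is the invariance $\e\rmin\le \reps^k(t)\le \e\rmax$, which I would obtain from the sign conditions \eqref{eq:g_Bound_below}--\eqref{eq:g_Bound_above} via a first-hitting-time/barrier argument: at any time $t_0$ where $\reps^k(t_0)/\e=\rmin$ the integrand $g(\cdot,\rmin)\ge 0$ forces $\dot{\reps^k}(t_0)\ge 0$, while at $\reps^k(t_0)/\e=\rmax$ one gets $\dot{\reps^k}(t_0)\le 0$; together with $\reps^k(0)\in[\e\rmin,\e\rmax]$ this precludes the trajectory from leaving the interval. This is the only part of the proof where the structural assumptions on $g$ are genuinely used, and care is needed because the sign conditions are weak inequalities (so I would argue via a vanishing-$\delta$ barrier rather than a strict comparison).

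Finally, setting $\reps(t,x)\coloneqq \reps^k(t)$ for $x\in \e(k+Y^*)$ produces a function that is constant on every microcell, satisfies $\e\rmin\le \reps\le \e\rmax$ pointwise, and whose time derivative fulfils $\|\partial_t\reps\|_{L^\infty((0,T)\times\Oe)}\le \e C_g$. Since $T<\infty$, this gives $\reps\in L^\infty((0,T)\times\Oe)\cap H^1((0,T),L^\infty(\Oe))$, hence $\reps\in Y_\e$, so $\mathcal{F}_1$ is well defined with range in $Y_\e$.
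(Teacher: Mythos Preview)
Your proposal is correct and follows essentially the same route as the paper: decouple \eqref{ODELinearization} into one scalar ODE per microcell, invoke Carath\'eodory using the Lipschitz dependence on $r$ and measurability in $t$, bound $|\partial_t\reps|\le \e C_g$ from $|g|\le C_g$, and use the sign conditions \eqref{eq:g_Bound_below}--\eqref{eq:g_Bound_above} for the confinement $\e^{-1}\reps\in[\rmin,\rmax]$. The only point the paper adds that you leave implicit is the measurability of $t\mapsto \reps(t,\cdot)$ as a map into $L^\infty(\Oe)$, which it handles by observing that the range lies in a fixed finite-dimensional subspace (piecewise constants on the cells); this is a minor technicality.
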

\begin{proof}
Let $\hueps \in L^2((0,T);H^\beta(\Oe))$. Due to the trace embedding $H^{\beta}(\oe)\hookrightarrow L^2(\geps)$,  the right-hand side of \eqref{eq:def:F1} is well defined. Then, for every $x \in \e(k +Y)$ (with $k \in K_{\e}$), the mapping 
\begin{align*}
  (t, r) \mapsto   \e \fint_{\geps\left(\e \bfxe\right)} g\left(\hueps(t,z),\e^{-1} r \right) \,\mathrm{d}\sigma_z
\end{align*}
is constant with respect to $x$, globally Lipschitz continuous with respect to $r$ and measurable with respect to $t$. Hence,  Carath\'eodory’s existence theorem yields the existence and uniqueness of a solution $\reps(\cdot_t, x) \in W^{1,1}(0,T)$ of \eqref{eq:def:F1}--\eqref{eq:def:F1_initial} for every $x \in \Oe$, and $\reps(\cdot_t,x)$ is constant with respect to $x$ on every microcell $\e(k + Y)$ (here, we also used that $\reps^\mathrm{in}$ is constant on every microcell). Then, we immediately obtain together with the essential boundedness of $g$ that $\reps \in L^{\infty}((0,T)\times \oe) \cap H^1((0,T),L^2(\oe))$ and also $\partial_t  \reps \in L^{\infty}((0,T) \times \oe)$. More precisely, we have for almost every $(t,x) \in (0,T)\times \oe$
\begin{align}\label{eq:EssentBound:dtR}
    |\partial_t \reps(t,x) | \le \e \|g\|_{L^{\infty}(\R^2)} \le \e C_g.
\end{align}
Moreover, \eqref{eq:g_Bound_below}--\eqref{eq:g_Bound_above} ensure $\e^{-1} \reps(t,x) \in [\rmin, \rmax]$ for every $t \in (0,T)$ and almost every $x \in \Oe$. Note also that, for every $t \in (0,T)$, the range $\reps(t,\cdot)$ lies in a finite-dimensional subspace of $L^{\infty}(\oe)$ and, therefore, $\reps$ is measurable as a function from $(0,T)$ to $L^{\infty}(\oe)$.
\end{proof}

The following Lemma provides a kind of Lipschitz estimate for the operator $\mathcal{F}_1$.

\begin{lemma}\label{lem:LipschitzContnuityF1}
Let $\ueps^j \in L^2((0,T) \times \geps)$, $j=1,2$, and $\reps^j $ be given by
\eqref{eq:StrongFormR:Transformed} for $\ueps = \ueps^j$ (not necessaryly with identical initial conditions).
Then, if $\reps^1(0) = \reps^2(0)$ it holds for almost every $t \in (0,T)$ that 
\begin{align*}
    \|\reps^1 - \reps^2\|_{L^{\infty}((0,t) \times \Omega)} &\le C_\e \|\ueps^1 - \ueps^2 \|_{L^1((0,t)\times \geps)},
    \\
    \|\partial_t \reps^1 -  \partial_t\reps^2\|_{L^2((0,t),L^{\infty}(\Omega))} &\le C_{\e}\|\ueps^1 - \ueps^2\|_{L^2((0,t),L^1(\geps))}
\end{align*}
for a constant $C_{\e}$ (depending on $\e$). In particular, for $\ueps^j \in L^2((0,T),H^1(\oe))$, we obtain for every $\theta >0$ the existence of a constant $C(\theta)>0$ such that the right-hand sides in the inequalities above can be further estimated by
\begin{align}\label{eq:TraceInequality}
    \e \|\ueps^1 - \ueps^2\|_{L^2((0,t),L^1(\geps))} \leq C(\theta) \|\ueps^1 - \ueps^2\|_{L^2((0,t) \times \oe)} + \theta \|\nabla \ueps^1 - \nabla \ueps^2 \|_{L^2((0,t)\times \oe)}.
\end{align}
We also have the $\e$-uniform Lipschitz estimate 
\begin{align}
\begin{aligned}\label{Lip_Estimate_reps}
   \e^{-1} \|\reps^1 - \reps^2& \|_{L^{\infty}((0,T),L^2(\Omega))} 
   \\
   &\le C \left(\sqrt{\e} \|\ueps^1 - \ueps^2\|_{L^2((0,T)\times \geps)} + \e^{-1} \| \reps^1(0) - \reps^2(0)\|_{L^2(\Omega)}\right)
\end{aligned}
\end{align}
for a constant $C>0$ independent of $\e$, which implies that $\mathcal{F}_1$ is Lipschitz continuous.
\end{lemma}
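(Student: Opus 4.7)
The starting point is to subtract the two ODEs satisfied by $\reps^1$ and $\reps^2$. Since both radii are constant on each microcell $\e(k+Y)$ and the Lipschitz constant $L_g$ of $g$ is available, for $x \in \e(k+Y)$ one obtains the pointwise bound
\begin{align*}
|\partial_t(\reps^1-\reps^2)(t,x)| \le \e L_g \fint_{\geps(\e [x/\e])} |\ueps^1-\ueps^2|(t,z)\,\mathrm{d}\sigma_z + L_g |\reps^1-\reps^2|(t,x),
\end{align*}
where the $\e$-prefactor has absorbed the $\e^{-1}$ coming from the Lipschitz estimate in the second argument of $g$, using that $\reps^j$ is constant on the cell. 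This reduces the first two estimates to a cellwise Gronwall argument.

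For the first two inequalities (non-uniform in $\e$), I would use that $|\geps(\e[x/\e])|\sim \e^{n-1}$ to bound the surface average by $C\e^{1-n}\|\ueps^1-\ueps^2\|_{L^1(\geps_k)}$ and hence by $C\e^{1-n}\|\ueps^1-\ueps^2\|_{L^1(\geps)}$ on the whole interface. Taking the $L^\infty(\Omega)$ norm in $x$ and applying Gronwall in $t$ with vanishing initial difference yields the $L^{\infty}((0,t)\times \Omega)$ bound with $C_\e = C\e^{2-n}$. Plugging this bound back into the above pointwise inequality and taking $L^2_t L^\infty_x$ norms delivers the second estimate. The trace inequality \eqref{eq:TraceInequality} follows from a standard $\e$-scaled trace inequality $\|v\|_{L^2(\geps)}^2 \le C(\e^{-1}\|v\|_{L^2(\oe)}^2 + \e\|\nabla v\|_{L^2(\oe)}^2)$ combined with $\|v\|_{L^1(\geps)}\le |\geps|^{1/2}\|v\|_{L^2(\geps)}$ and $|\geps|\sim \e^{-1}$; splitting the resulting $\|v\|_{L^2(\oe)}+\e\|\nabla v\|_{L^2(\oe)}$ via Young's inequality in the parameter~$\theta$ gives the stated form.

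The decisive step is the $\e$-uniform estimate \eqref{Lip_Estimate_reps}, where the $L^\infty(\Omega)$ norm must be replaced by the $L^2(\Omega)$ norm to avoid the blow-up factor $\e^{2-n}$. Taking $L^2(\Omega)$ of the pointwise bound and using that the surface average is constant on each cell,
\begin{align*}
\Bigl\|\fint_{\geps(\e[\cdot/\e])} |\ueps^1-\ueps^2|\,\mathrm{d}\sigma\Bigr\|_{L^2(\Omega)}^2 = \sum_{k\in K_\e} \frac{\e^n}{|\geps_k|^2}\Bigl(\int_{\geps_k}|\ueps^1-\ueps^2|\,\mathrm{d}\sigma\Bigr)^2,
\end{align*}
Cauchy--Schwarz on each cell yields $(\int_{\geps_k}|\ueps^1-\ueps^2|\,\mathrm{d}\sigma)^2 \le |\geps_k|\,\|\ueps^1-\ueps^2\|_{L^2(\geps_k)}^2$. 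Together with $|\geps_k|\sim \e^{n-1}$ the sum evaluates to $C\e\,\|\ueps^1-\ueps^2\|_{L^2(\geps)}^2$, producing the key factor $\e^{1/2}$. Thus
\begin{align*}
\|\partial_t(\reps^1-\reps^2)(t)\|_{L^2(\Omega)} \le C\e^{3/2}\|\ueps^1-\ueps^2(t)\|_{L^2(\geps)} + L_g\|\reps^1-\reps^2(t)\|_{L^2(\Omega)}.
\end{align*}
Integrating in time, applying Gronwall, bounding the resulting $L^1_tL^2_x$ norm by $T^{1/2}\|\ueps^1-\ueps^2\|_{L^2((0,t)\times \geps)}$, and finally dividing by $\e$ gives \eqref{Lip_Estimate_reps}. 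The main obstacle is tracking the correct powers of $\e$ through the cellwise Cauchy--Schwarz step; the $\e^{3/2}$-gain is precisely what distinguishes the uniform $L^2_x$-estimate from the $\e$-singular $L^\infty_x$-estimates and, as a consequence, makes $\mathcal{F}_1$ Lipschitz with an $\e$-independent constant.
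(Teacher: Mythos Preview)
Your argument is correct and follows precisely the natural route: subtract the ODEs, exploit the Lipschitz continuity of $g$ together with the cellwise constancy of $\reps^j$, and close with Gronwall; the cellwise Cauchy--Schwarz step producing the $\e^{1/2}$-gain in the $L^2(\Omega)$ estimate is exactly the point that makes \eqref{Lip_Estimate_reps} uniform in~$\e$. The paper itself does not reprove the lemma but merely cites \cite[Lemma~14 and Proposition~4]{GahnPop2023Mineral}, where the same computations are carried out, so your proposal in fact supplies the details that the present paper omits.
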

\begin{proof}
A proof can be found in \cite[Lemma 14]{GahnPop2023Mineral} and in the proof of \cite[Proposition 4]{GahnPop2023Mineral}. 
\end{proof}
Next, we analyze the operator $\mathcal{F}_2$ and show that it is Lipschitz continuous in particular. 

\begin{lemma}\label{lem:Stokes}
The operator $\mathcal{F}_2$ is well defined and $(\Reps,\mathcal{F}_2(\Reps)) \in Z_\e$ for all $\Reps \in Y_\e$. 
\end{lemma}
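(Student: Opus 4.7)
The linearised Stokes system \eqref{eq:def:F2} is, for each fixed $t\in(0,T)$, a stationary generalised Stokes problem whose coefficients $A_\e, J_\e, F_\e$ are built from the given $R_\e\in Y_\e$ and hence enjoy all the structural estimates of Lemma \ref{lem:EstimatesEpsTrafo}. My plan is to pass to a saddle-point formulation slice by slice in $t$ and invoke the Babu\v{s}ka--Brezzi framework, with the coercivity \eqref{eq:Sym-coercivityFeps} and the inf--sup estimate \eqref{eq:Inf-Sup} as the two ingredients of the abstract theory. I would first read off the correct boundary condition on $\Ge$: since $w_\e=v_\e-\partial_t\psi_\e$ and $\partial_t\psi_\e|_{\Ge}=-\partial_t R_\e\nu$, the effective no-slip condition is $w_\e|_{\Ge}=0$, while the information in the original Dirichlet datum is encoded in the inhomogeneous divergence constraint $\nabla\cdot(A_\e w_\e)=-\partial_t J_\e$ (cf.\ Remark \ref{RemarksTrafoVelocity}(iv)).

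To remove the inhomogeneity of the constraint, I would use the inf--sup condition \eqref{eq:Inf-Sup} to produce, pointwise in $t$, a lift $w^{\mathrm{lift}}_\e(t)\in H^1_{\Ge}(\Oe)^n$ satisfying $\nabla\cdot(A_\e w^{\mathrm{lift}}_\e)=-\partial_t J_\e$ together with the bound $\e\|\nabla w^{\mathrm{lift}}_\e\|_{L^2(\Oe)}\le\beta^{-1}\|\partial_t J_\e\|_{L^2(\Oe)}$, whose right-hand side is controlled via \eqref{eq:dt_psi<C}. Writing $\tw_\e:=w_\e-w^{\mathrm{lift}}_\e$, the problem restricts to the divergence-free (weighted) subspace and is governed by the symmetric bilinear form
\begin{equation*}
a_\e(u,v):=\e^2\int_{\Oe}J_\e\, e_\e(u):e_\e(v)\,\mathrm{d}x=\e^2\int_{\Oe}e_\e(u):A_\e^\top\nabla v\,\mathrm{d}x,
\end{equation*}
which is continuous by \eqref{eq:psi-id} and coercive with respect to the norm $v\mapsto\|\nabla v\|_{L^2(\Oe)}$ by \eqref{eq:Sym-coercivityFeps}. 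Lax--Milgram then delivers a unique $\tw_\e(t)$, and the inf--sup condition \eqref{eq:Inf-Sup} recovers the pressure $q_\e(t)\in L^2(\Oe)$ as a Lagrange multiplier. Testing with $\tw_\e$ and bounding the right-hand side using \ref{ass:h}, the regularity of $p_\mathrm{b}$ and the estimates of Lemma \ref{lem:EstimatesEpsTrafo} yields $\|\nabla w_\e(t)\|_{L^2(\Oe)}\le C_h$ with $C_h$ depending only on $C_g$ and $h$.

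Finally, to verify the defining properties of $Z_\e$: the relation $\nabla\cdot(A_\e(w_\e+\partial_t\psi_\e))=0$ is built into the construction, and measurability of $t\mapsto(w_\e(t),q_\e(t))$ into $H^1_{\Ge}(\Oe)^n\times L^2(\Oe)$ follows from the continuous dependence of the saddle-point solution on the data together with the measurability of $t\mapsto R_\e(t,\cdot)$ into $L^\infty(\Oe)$. The main technical obstacle is the natural boundary contribution on $\partial\Omega$: the term $A_\e^\top\nabla p_\mathrm{b}$ appearing in \eqref{eq:def:F2} must be handled via integration by parts using the extension $p_\mathrm{b}\in L^\infty((0,T),H^1(\Omega))$ so that its contribution to the energy estimate is controlled by a volume $L^2$-norm rather than an $H^{-1/2}$-surface norm; a related but smaller point is checking that the Korn-type inequality behind \eqref{eq:Sym-coercivityFeps} genuinely covers $H^1_{\Ge}(\Oe)^n$ on the perforated domain.
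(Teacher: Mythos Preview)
Your proposal is correct and follows essentially the same route as the paper, which simply cites \cite[Theorem~3.1]{wiedemann2021homogenisation} for the pointwise-in-time existence, uniqueness and uniform bound $C_h$; that reference carries out precisely the Babu\v{s}ka--Brezzi saddle-point argument you sketch, using the coercivity \eqref{eq:Sym-coercivityFeps} and the inf--sup condition \eqref{eq:Inf-Sup}. Your two flagged ``obstacles'' are non-issues: the term $A_\e^\top\nabla p_\mathrm{b}$ already appears as a volume integral in the weak form \eqref{eq:weak_w_eps_fixed_domain} and is directly controlled by $\|\nabla p_\mathrm{b}\|_{L^2(\Omega)}$ together with the Poincar\'e inequality on $H^1_{\Ge}(\Oe)$, and the Korn-type estimate \eqref{eq:Sym-coercivityFeps} is taken as an established input from Lemma~\ref{lem:EstimatesEpsTrafo}.
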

\begin{proof}
In \cite[Theorem3.1]{wiedemann2021homogenisation}, the existence of a unique solution $(w_\e(t), q_\e(t)) \in H^1_{\geps}(\Oe)^n \times L^2(\Oe)$ of \eqref{eq:def:F2} for a.e.~$t\in (0,T)$ was shown under the assumptions \eqref{eq:psi-id}--\eqref{eq:J>c_J} on $\hpsieps(t)$ and the assumptions  $\hheps(t) \in L^2(\Omega)^n$ and $\partial_t \psieps(t) \in H^1(\Oe)^n$. Moreover, this pointwise argumentation was transferred to the existence and uniqueness of a solution $(w_\e, q_\e) \in L^\infty((0,t);H^1_{\geps}(\Oe)^n) \times L^\infty((0,T);L^2(\Oe))$ of \eqref{eq:def:F2}, where the $L^\infty(0,T)$-boundedness of the solution arises from the  $L^\infty(0,T)$-boundedness of $\hheps$ (see \ref{ass:h}) and $\partial_t \psieps$, which is given by Lemma \ref{lem:EstimatesEpsTrafo}.

In order to conclude $(\Reps,\mathcal{F}_2(\Reps)) \in Z_\e$, we note that the divergence condition is obviously fulfilled. The existence of an upper bound $C_h$ such that $\|\mathcal{F}_2(\Reps) \|_{L^\infty((0,T);H^1_{\geps}(\Oe)^n)} \leq  C_h$ for all $\Reps \in Y_\e$ is shown in \cite[Theorem3.1]{wiedemann2021homogenisation} as well.
\end{proof}

In what follows, for two given functions $\phi^1$ and $\phi^2$, we denote its difference by
\begin{align*}
\delta \phi := \phi^1  - \phi^2.
\end{align*}

\begin{lemma}\label{lem:Lip_L2}
The operator $\mathcal{F}_2$ is locally Lipschitz continuous. More precisely, for every  ball $B \subset Y_\e$, there exists a constant $L_{B,\e}>0$ such that 
\begin{align}
    \begin{aligned}\label{eq:Lip:F2_L_infinity}
    \big\|\mathcal{F}_2(\hreps^1) - \mathcal{F}_2(\hreps^1)\big\|_{L^2(\tOe)} + \norm{\nabla (\mathcal{F}_2(\hreps^1) - \mathcal{F}_2(\hreps^1))}{L^2(\tOe)} \\
\leq L_{B,\e} \left(\norm{\hreps^1 - \hreps^2}{L^{\infty}((0,t),L^2(\Oe))} +    \norm{\partial_t \hreps^1 - \partial_t \hreps^2}{L^2( \tOe)} \right)
    \end{aligned}
\end{align}
for all $\hReps^1, \hReps^2 \in B$ and every $t \in (0,T)$.
\end{lemma}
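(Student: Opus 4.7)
The plan is to set up an equation for the difference $\delta w_\e := \mathcal{F}_2(\hReps^1)-\mathcal{F}_2(\hReps^2)$ (and analogously $\delta q_\e$ for the pressures), apply the coercivity \eqref{eq:Sym-coercivityFeps} and the inf--sup condition \eqref{eq:Inf-Sup} of Lemma~\ref{lem:EstimatesEpsTrafo} with the coefficients corresponding to $\hReps^1$, and bound the resulting right-hand side term by term using the coefficient-difference estimates of Lemma~\ref{lem:EstimateShiftsReps}. Subtracting the weak formulation \eqref{eq:def:F2} for $\hReps^2$ from the one for $\hReps^1$ yields a linearised Stokes problem for $(\delta w_\e,\delta q_\e)$ whose principal part is the bilinear form associated with $\hAeps^1,\hFeps^1$; all terms in which the coefficients for $i=1,2$ are not identical are moved to the right-hand side, giving source terms of the form $(\hAeps^1-\hAeps^2)\widehat e^2_\e(w_\e^2)$, $\hAeps^1(\widehat e^1_\e-\widehat e^2_\e)(w_\e^2)$, $(\hAeps^1-\hAeps^2)^\top \nabla q_\e^2$, $(\hJeps^1-\hJeps^2)\hheps^1 + \hJeps^2(\hheps^1-\hheps^2)$, $(\hAeps^1-\hAeps^2)^\top \nabla p_{\mathrm b}$ and analogous terms containing $\partial_t\hpsieps^i$ both in $\delta F_\e$ and in the divergence source $\delta G_\e$.

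To get rid of the inhomogeneous Dirichlet trace $\delta w_\e = -\partial_t\delta\hReps\,\nu$ on $\Ge$, I lift it by $\partial_t\delta\hpsieps$, which has exactly this trace on $\Ge$ by Remark~\ref{RemarksTrafoVelocity}(ii) and vanishes near $\partial\Omega$ by the cut-off construction \eqref{eq:def:psi}. The lifted unknown lives in $H^1_{\Ge}(\Oe)^n$, and the additional source terms produced by the lift are controlled by $\norm{\nabla \partial_t\delta\hpsieps}{L^2(\Oe)}$, which Lemma~\ref{lem:EstimateShiftsReps} bounds by $\e^{-1}C(\norm{\delta\hReps}{L^2(\Oe)}+\norm{\partial_t\delta\hReps}{L^2(\Oe)})$. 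At this point, the coercivity and inf--sup estimates yield, pointwise in $t$,
\begin{align*}
\e\,\norm{\nabla\delta w_\e(t)}{L^2(\Oe)} + \norm{\delta q_\e(t)}{L^2(\Oe)} \le C\bigl(\norm{\delta F_\e(t)}{H^{-1}(\Oe)}+\norm{\delta G_\e(t)}{L^2(\Oe)}+\text{lift contributions}\bigr).
\end{align*}

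It then remains to bound each source term, and this is where the ball $B\subset Y_\e$ and the dependence on $\e$ enter. The multiplicative factors $\norm{w_\e^2(t)}{H^1(\Oe)}$, $\norm{q_\e^2(t)}{L^2(\Oe)}$, $\norm{\hheps^i(t)}{L^2(\Oe)}$, $\norm{\partial_t\hpsieps^i(t)}{H^1(\Oe)}$ and $\norm{\nabla p_{\mathrm b}(t)}{L^2(\Oe)}$ are uniformly bounded by Lemma~\ref{lem:Stokes}, assumption~\ref{ass:h} and Lemma~\ref{lem:EstimatesEpsTrafo}; these uniform bounds supply the $B$-dependence of the constant. The coefficient differences themselves are estimated by Lemma~\ref{lem:EstimateShiftsReps}: the $L^\infty(\Oe)$-differences of $\hAeps^i,\hJeps^i,\hFeps^i,\widehat D_\e^i$ cost a factor $\e^{-1}\norm{\delta\hReps}{L^\infty(\Oe)}$, which since $\hReps^i$ are cellwise constant is further controlled by $\e^{-1-n/2}\norm{\delta\hReps}{L^2(\Oe)}$; the time-derivative differences cost $\e^{-1}(\norm{\delta\hReps}{L^2(\Oe)}+\norm{\partial_t\delta\hReps}{L^2(\Oe)})$ directly. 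Squaring, integrating over $(0,t)$ and taking the essential supremum of $\norm{\delta\hReps(\tau)}{L^2(\Oe)}$ over $\tau\in(0,t)$ wherever it appears without a time derivative produces exactly the two norms on the right-hand side of \eqref{eq:Lip:F2_L_infinity}.

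The main obstacle is the bookkeeping in the first step: isolating a clean principal bilinear form in the coefficients for $\hReps^1$ so that the coercivity and inf--sup estimates from Lemma~\ref{lem:EstimatesEpsTrafo} are directly applicable, and then checking that every single difference term (in particular the one containing $\nabla\delta q_\e$-free contributions with the pressure $q_\e^2$, which is only in $L^2$, and the symmetric-gradient differences $\widehat e_\e^1-\widehat e_\e^2$ arising from $\hFeps^{-\top}$) can indeed be absorbed into one of the two norms on the right of \eqref{eq:Lip:F2_L_infinity} with a constant that depends only on $B$ and $\e$ (not on the individual functions $\hReps^i$). No $\e$-uniformity is claimed or needed.
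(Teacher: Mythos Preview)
Your direct energy-estimate route is correct and is essentially what underlies the paper's argument, but the paper organises things differently: it packages the stability step into an abstract saddle-point Lipschitz result (Lemma~\ref{lem:SaddlePointLpschitzContinuity}), which says that if a family of saddle-point problems has uniformly bounded and coercive bilinear forms $a_{\hReps}$, uniformly inf--sup stable $b_{\hReps}$, and Lipschitz-in-$\hReps$ data $f_{\hReps},g_{\hReps}$, then the solution map is locally Lipschitz. The proof of Lemma~\ref{lem:Lip_L2} then reduces to checking the Lipschitz dependence of the four objects $a_{\hReps},b_{\hReps},f_{\hReps},g_{\hReps}$ on $\hReps$ via Lemma~\ref{lem:EstimateShiftsReps}, which is exactly your term-by-term bookkeeping. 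Your approach amounts to inlining the proof of the abstract lemma into the specific situation; you trade modularity for concreteness, and in particular you have to redo the coupled coercivity/inf--sup manipulation by hand rather than invoke it once.

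One correction: the lift by $\partial_t\delta\hpsieps$ is unnecessary. By definition $\mathcal{F}_2$ takes values in $H^1_{\Ge}(\Oe)^n$, so both $w_\e^1$ and $w_\e^2$ already vanish on $\Ge$ and hence $\delta w_\e|_{\Ge}=0$. (The appearance of $\veps$ in the boundary line of \eqref{eq:def:F2} is a typo for $\weps$; the inhomogeneous Dirichlet data were subtracted in passing from $v_\e$ to $w_\e$.) Your lift does no harm---the extra terms are bounded by the very estimates you cite---but it can be dropped, which also removes the need to track the cut-off behaviour near $\partial\Omega$.
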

\begin{proof}
We proof this Lemma by applying the generic Lemma \ref{lem:SaddlePointLpschitzContinuity}.
Within this proof, we use the shorthand notation $V = H^1_{\Ge}(\Oe)$ with the norm $\norm{v}{V} \coloneqq \norm{\nabla v}{L^2(\Oe)}$ and $Q = L^2(\Oe)$. Then, we define almost everywhere in $(0,T)$ 
\begin{align*}
a_{\hReps}(v,w) &:= \tfrac{1}{2}\e^2 (\widehat{e}_{\e}(v), \hAeps^\top \nabla w)_{Q} = \tfrac{1}{2}\e^2 (\hFeps^\top \nabla v + (\hFeps^\top \nabla v)^\top, \hAeps^\top \nabla w)_{Q},
\\
b_{\hReps}(v,q) &:= - (q, \div(\hAeps  v))_{Q} = - (q,\hAeps : \nabla v)_Q ,
\\
f_{\hReps}(v) &:= (\hjeps h_\e, v)_{Q}-a_{\hReps}(\partial_t \hpsieps,v) - (\hAeps  \nabla p_\mathrm{b},  v)_{Q} ,
\\
g_{\hReps }(q) &:= b(\partial_t\hpsieps , q ) =
(q , \div(\hAeps  \partial_t \hpsieps ))_{Q} = (q , \partial_t \hJeps)_{Q}
\end{align*}
for $v,w \in V$ and $q \in Q$.
We note that $\hReps^1, \hReps^2 \in Y_\e$ and, thus, we obtain with the H\"older inequality, the bounds for $\hFeps$ and $\Aeps$ from Lemma \ref{lem:EstimatesEpsTrafo} and the Lipschitz estimates of Lemma  \ref{lem:EstimateShiftsReps}
\begin{align}\label{eq:Lip:BilA}
\begin{aligned}
 |a_{\hReps^1 }(v,w) -a_{\hReps^2 }(v,w)|
=&
\tfrac{1}{2}\e^2 (\delta \hFeps  \nabla v + (\delta \hFeps   \nabla v)^\top, \hAeps^1  \nabla w)_{Q}
\\
&+
\tfrac{1}{2}\e^2 (\hFeps^{2\top} \nabla v + ( \hFeps^{2 \top}  \nabla v)^\top, \delta\hAeps^\top\nabla w)_{Q}
\\
\leq& C_\e
\left(\norm{\delta \hFeps}{L^\infty(\Oe)}
+
\norm{\delta \hAeps}{L^\infty(\Oe)}
\right) \norm{v}{V} \norm{w}{V}.
\\
\leq& C_\e \|\delta\hReps\|_{L^2(\Oe)}.
\end{aligned}
\end{align}
By employing the boundedness of $\hAeps$ and $\partial \Aeps$ from Lemma \ref{lem:EstimatesEpsTrafo},
\begin{align}
\begin{aligned}\label{eq:Lip:BilB}
    |b_{\hReps^1}(v,q) -b_{\hReps^2}(v,q)| &= | (q , \delta \hAeps : \nabla v)_Q| \le \|\delta \hAeps\|_{L^{\infty}(\Oe)} \|q\|_Q  \|v\|_{V}
    \\
    &\le C_{\e} \|\delta \reps \|_{L^{\infty}(\Oe)} \|q\|_Q \|v\|_V
\end{aligned}
\end{align}
for all $v \in V$ and $q \in Q$.

Taking into account the H\"older inequality, the Lipschitz continuity of $h$, the boundedness of $\hAeps$,  $\hFeps$ and $\hheps^1$,
\eqref{eq:Lip:BilA}, the Poincar\'e inequality for $H^1_{\Ge}(\Oe)$ and Lemma \ref{lem:EstimatesEpsTrafo}, we obtain
\begin{align*}
|f_{\hReps^1}(v) -  f_{\hReps^2}(v)| \leq& |(\delta\hjeps \widehat{h}_\e, v)_{L^2(\tOe)}|
+ |a_{\hReps^1}(\delta \partial_t \hpsieps,v)| 
\\
&+ |a_{\hReps^1}(\partial_t \hpsieps^2,v)- a_{\hReps^2}(\partial_t \hpsieps^2,v)| | 
+
| (\delta \hAeps  \nabla p_\mathrm{b},  v)_{Q}|
\\
\leq& 
C \norm{\delta \widehat{J}_{\e}}{L^\infty(\Oe)}\norm{v}{Q} 
+
C \norm{\delta \partial_t \hpsieps}{V} \norm{v}{V}
\\
&+
C_\e \|\delta\hReps\|_{L^2(\Oe)} \norm{\partial_t \hpsieps^2}{V}
\norm{v}{V}
+
C_\e \norm{\delta\hAeps}{L^\infty(\Oe)} \norm{v}{Q}
\\
\le& C_{\e} \left[ \left(1 + \|\partial_t \hReps^2 \|_{L^2(\Oe)} \right)\|\delta \hReps\|_{L^{\infty}(\Oe)} + \|\delta \partial_t \hReps\|_Q \right] \|v\|_V.
\end{align*}
Employing Lemma  \ref{lem:EstimatesEpsTrafo}, we obtain
\begin{align*}
    |g_{\hReps^1}(q) -g_{\hReps^2}(q) | &= |(q, \delta \partial_t \hjeps )_Q|\le \|q\|_Q \|\delta \partial_t \jeps \|_Q
    \le C_{\e} \| \delta \partial_t \hReps\|_Q \|q\|_Q.
\end{align*}
Thus, we obtain overall
\begin{align}\label{eq:LipDataF2}
\begin{aligned}
&\norm{a_{\hReps^1} -a_{\hReps^2}}{\Bil}
+
\norm{b_{\hReps^1} -b_{\hReps^2}}{\Bil}
\leq
C_\e \norm{\hReps^1 -\hReps^2}{L^2(\Oe)}
\\
&\norm{f_{\hReps^1} -f_{\hReps^2}}{V'}
+
\norm{g_{\hReps^1} -g_{\hReps^2}}{Q'}
\leq
C_\e M
\norm{\hReps^1 -\hReps^2}{L^2(\Oe)}
+ C_\e \norm{\partial_t (\hReps^1 -\hReps^2)}{L^2(\Oe)}
\end{aligned}
\end{align}
for $M = 1 + \|\partial_t \hReps^1\|_{L^2(\Oe)}+\|\partial_t \hReps^2\|_{L^2(\Oe)}$.
By similar estimates, it can be easily observed that 
\begin{align*}
&\norm{a_{\hReps}}{\Bil} + \norm{b_{\hReps}}{\Bil} \leq C_\e,
\\
&\norm{f_{\hReps}}{V'}+ \norm{g_{\hReps}}{Q'} \leq C_\e M
\end{align*}
for $\hreps^1, \hreps^2 \in B$. 
Moreover, \eqref{eq:Sym-coercivityFeps} and \eqref{eq:Inf-Sup} yield constants $\alpha_0, \beta_0 >0$ such that 
\begin{align*}
a_{\hReps}(v,v) \geq \alpha_0 \norm{v}{V}^2
\end{align*}
for all $v \in V$ and 
\begin{align*}
\inf\limits_{q \in Q} \sup\limits_{v \in V} \frac{b_{\hReps}(v,q)}{\norm{v}{V}\norm{q}{Q}} \geq \beta_0.
\end{align*}
Thus, $a_{\hReps}, b_{\hReps}, f_{\hReps}$ and $g_{\hReps}$ fulfill the assumptions of Lemma \ref{lem:SaddlePointLpschitzContinuity} and we can transfer the Lipschitz continuity of the data (see \eqref{eq:LipDataF2}) onto the solution which yields almost everywhere in $(0,T)$
\begin{align*}
\big\|\mathcal{F}_2(\hreps^1) - \mathcal{F}_2(\hreps^1)&\big\|_{L^2(\Oe)} + \norm{\nabla (\mathcal{F}_2(\hreps^1) - \mathcal{F}_2(\hreps^1))}{L^2(\Oe)} 
\\
&\leq
L  C_\e M \norm{\hReps^1 -\hReps^2}{L^2(\Oe)} + C_\e \norm{\partial_t (\hReps^1 -\hReps^2)}{L^2(\Oe)} .
\end{align*}
Integrating with respect to time from $0$ to $t \in (0,T)$ and employing the H\"older inequality yields
\begin{align*}
\|\mathcal{F}_2(\hreps^1)& - \mathcal{F}_2(\hreps^1)\|_{L^2((0,t) \times \Oe)} + \norm{\nabla (\mathcal{F}_2(\hreps^1) - \mathcal{F}_2(\hreps^1))}{L^2((0,T) \times \Oe)}  
\\
&\leq
C_\e \norm{M}{L^2(0,T)}
\norm{\hReps^1 -\hReps^2}{L^\infty((0,T);L^2(\Oe))}
+
C_\e \norm{\partial_t (\hReps^1 -\hReps^2)}{L^2((0,T) \times \Oe)} 
\end{align*}
and, thus, \eqref{eq:Lip:F2_L_infinity}.
We note that the proof also holds for more general $\Reps^1,\Reps^2$, even if they do not fulfil the bounds $\partial_t\Reps^1, \partial_t\Reps^1  \in L^\infty((0,T)\times \Oe)$.
\end{proof}

\begin{lemma}\label{LemmaF3}
The operator $\mathcal{F}_3$ is well defined and continuous.
\end{lemma}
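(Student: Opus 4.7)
The plan is to treat \eqref{eq:def:F3} as a linear parabolic equation in $u_\e$ with time-dependent coefficients determined by the frozen inputs $(\hreps, \hweps, \hueps)$. Existence will follow from a Galerkin approximation in a basis of $H^1_{\partial\Omega}(\Oe)$; working at the discrete level circumvents the regularity issue flagged in Remark \ref{Rem:Def_Weak_formulation}(v) and allows testing with $u_\e$ itself. The non-degeneracy $\hjeps \geq c_J$ and the coercivity of $\hDeps$ from Lemma \ref{lem:EstimatesEpsTrafo} ensure that, after passage to the Galerkin limit using the a priori estimate below, one obtains a weak solution $u_\e \in L^2((0,T); H^1_{\partial\Omega}(\Oe)) \cap L^\infty((0,T); L^2(\Oe))$ with $\partial_t(\hjeps u_\e) \in L^2((0,T); H^1_{\partial\Omega}(\Oe)')$. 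Uniqueness follows from the same estimate applied to the difference of two solutions, and the embedding $H^1 \hookrightarrow H^\beta$ puts $u_\e$ in the target space $L^2((0,T); H^\beta(\Oe))$.

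The core a priori estimate is obtained by testing with $u_\e$. The product rule gives
\[
\langle \partial_t(\hjeps u_\e), u_\e\rangle = \tfrac{1}{2}\tfrac{d}{dt}\int_{\Oe} \hjeps u_\e^2 \, dx + \tfrac{1}{2}\int_{\Oe} (\partial_t \hjeps)\, u_\e^2 \, dx,
\]
while integration by parts in the advective term, using $u_\e|_{\partial\Omega} = 0$, $\hweps|_{\geps} = 0$, and the constraint $\nabla\cdot(\hAeps \hweps) = -\partial_t \hjeps$ encoded in $Z_\e$, yields
\[
-\int_{\Oe} u_\e \hAeps\hweps \cdot \nabla u_\e \, dx = \tfrac{1}{2}\int_{\Oe} u_\e^2\, \nabla\cdot(\hAeps\hweps)\, dx = -\tfrac{1}{2}\int_{\Oe}(\partial_t\hjeps) u_\e^2 \, dx.
\]
These $\partial_t \hjeps$ contributions cancel exactly, leaving
\[
\tfrac{1}{2}\tfrac{d}{dt}\int_{\Oe} \hjeps u_\e^2 \, dx + \alpha\|\nabla u_\e\|_{L^2(\Oe)}^2 \leq \int_{\Oe}\hjeps f(\hueps) u_\e \, dx - \rho\int_{\geps}\hjeps g(\hueps,\e^{-1}\hreps) u_\e \, d\sigma.
\]
The right-hand side is controlled by the linear growth of $f$, the uniform bound on $g$, Young's inequality, and a trace inequality; combined with $\hjeps \geq c_J$, Gronwall closes the estimate.

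For continuity, consider $(\hreps^n, \hweps^n, \hueps^n) \to (\hreps, \hweps, \hueps)$ in $Z_\e \times L^2((0,T); H^\beta(\Oe))$ and set $\delta u := \mathcal{F}_3(\hreps^n,\hweps^n,\hueps^n) - \mathcal{F}_3(\hreps, \hweps, \hueps)$. Subtracting the two equations yields a linear problem for $\delta u$ whose forcing comes from the differences $f(\hueps^n) - f(\hueps)$ and $g(\hueps^n,\e^{-1}\hreps^n) - g(\hueps,\e^{-1}\hreps)$ (handled by the Lipschitz continuity of $f, g$ and the trace embedding $H^\beta(\Oe) \hookrightarrow L^2(\geps)$ valid for $\beta > 1/2$), from the coefficient discrepancies $\hjeps^n - \hjeps$, $\hDeps^n - \hDeps$, $\hAeps^n - \hAeps$ acting on the already-constructed limit solution (controlled via Lemma \ref{lem:EstimateShiftsReps}), and from the advective discrepancy $\hAeps^n \hweps^n - \hAeps \hweps$. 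Repeating the energy argument on $\delta u$ and invoking Gronwall yields $\delta u \to 0$ in $L^2((0,T); H^1(\Oe))$, hence in $L^2((0,T); H^\beta(\Oe))$.

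The main obstacle will be preserving the $\partial_t \hjeps$ cancellation in the difference equation: the perturbed divergence constraint reads $\nabla\cdot(\hAeps^n \hweps^n - \hAeps \hweps) = -\partial_t(\hjeps^n - \hjeps)$, which introduces additional source terms whose norms must be bounded in terms of the data of the convergent sequences. This demands careful bookkeeping using the Lipschitz estimates of Lemma \ref{lem:EstimateShiftsReps} together with the $L^\infty$-in-time structure of the radii provided by $Y_\e$; since $\e$ remains fixed throughout this lemma, the $\e$-dependent constants appearing in these manipulations are harmless.
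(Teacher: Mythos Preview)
Your existence argument via Galerkin approximation coincides with the paper's. For continuity, however, you take a genuinely different route: you aim for a direct energy estimate on the difference $\delta u$, which would in fact establish Lipschitz continuity of $\mathcal{F}_3$. The paper instead uses a compactness argument: it derives the uniform bound $\|\partial_t u_\e^k\|_{L^2((0,T),H^1_{\partial\Omega}(\Oe)')} + \|u_\e^k\|_{L^2((0,T),H^1(\Oe))} \le C_\e$, applies the Aubin--Lions lemma to extract a subsequence converging strongly in $L^2((0,T),H^\beta(\Oe))$, and then passes to the limit in the weak formulation to identify the limit with $\mathcal{F}_3(\hreps,\hweps,\hueps)$; uniqueness of the linear problem upgrades this to convergence of the full sequence. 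The paper explicitly acknowledges your approach in Remark~\ref{RemarkGalerkinMethod}, noting that $\mathcal{F}_3$ is even Lipschitz continuous, but opts for the compactness route because it is shorter: it avoids the bookkeeping you flag at the end (the cross term $\langle \partial_t((\hjeps^n-\hjeps)u),\delta u\rangle$ and the advective discrepancy $u(\hAeps^n\hweps^n-\hAeps\hweps)\cdot\nabla\delta u$, which require the $W^{1,\infty}$ Lipschitz control on $\hjeps$, the $L^\infty((0,T),H^1)$ bound on $\hweps$ from $Z_\e$, and a dominated-convergence-type argument in time). Your approach yields a stronger conclusion at the price of a more involved proof; the paper's buys brevity at the cost of proving only bare continuity.
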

\begin{proof}
Existence follows by the Galerkin method and similar a priori estimates as in Lemma \ref{LemmaAprioriEstimates} in Section \ref{SectionAprioriEstimates}.

For the continuity of the operator, we consider sequences $(\hreps^k,\hweps^k) \in Z_{\e}$, see $\eqref{Def:Z_eps}$, and $\hueps^k \in L^2((0,T),H^{\beta}(\Oe))$, such that for $k\to \infty$
\begin{align*}
    \hreps^k &\rightarrow \hreps &\mbox{ in }& L^{\infty}((0,T)\times \oe) \cap H^1((0,T),L^{\infty}(\oe)),
    \\
    \hweps^k &\rightarrow \hweps &\mbox{ in }& L^2((0,T),H^1_{\geps}(\oe)^n),
    \\
    \hueps^k &\rightarrow \hueps &\mbox{ in }& L^2((0,T),H^{\beta}(\Oe)).
\end{align*}
From Lemma \ref{lem:EstimateShiftsReps}, we also get the strong convergence of $\hjeps^k$, $\hDeps^k$, and $\hAeps^k$ in $L^{\infty}((0,T)\times \oe)$ to $\hjeps$, $\hDeps$, and $\hAeps$.
From the a priori estimates in Lemmas \ref{LemmaAprioriEstimates} and \ref{LemmaLinftyBoundUeps} below, we obtain for $\ueps^k := \mathcal{F}_3(\hreps^k,\hweps^k)$ and a constant $C_{\e}>0$ (depending on $\e$) the estimate
\begin{align*}
  \|\partial_t \ueps^k\|_{L^2((0,T),H_{\partial \Omega}^1(\Oe)')} +  \|\ueps^k\|_{L^2((0,T),H^1(\oe))}  \le C_{\e}.
\end{align*}
Hence, there exists $\ueps \in L^2((0,T),H^1_{\partial\Omega}(\oe))\cap H^1((0,T),H^1_{\partial \Omega}( \oe)')$ such that, up to a subsequence, $\ueps^k$ converges weakly to $\ueps$ in $L^2((0,T),H^1(\oe))$ and strongly in $L^2((0,T),H^{\beta}( \Oe))$ by the Aubin--Lions lemma and, by continuity of the trace operator, also in the space $L^2((0,T)\times \geps)$. Further, $\partial_t \ueps^k$ converges weakly to $\partial_t \ueps$ in $L^2((0,T),H^1_{\partial \Omega}(\Oe)')$.
Testing the weak formulation of the linearised problem $\eqref{eq:def:F3}$ with $\phi(x) \psi(t)$ with $\phi \in H^1_{\partial \Omega}(\oe)$ and $\psi \in C_0^{\infty}([0,T))$, we get
\begin{align*}
    - \int_0^T&\langle \hjeps^k \ueps^k ,\phi \rangle_{H^1_{\partial \Omega}(\Oe)',H^1_{\partial \Omega}(\Oe)} \psi \,\mathrm{d}x \,\mathrm{d}t + \int_0^T \int_{\oe } [\hDeps^k \nabla \ueps^k - \ueps^k \hAeps^k \hweps^k ] \cdot \nabla \phi \psi \,\mathrm{d}x \,\mathrm{d}t 
    \\
    &= \int_0^T \int_{\oe } \hjeps^k f(\hueps^k) \phi \psi \,\mathrm{d}x \,\mathrm{d}t - \rho \int_0^T \int_{\geps } \hjeps^k g\left(\hueps^k,\e^{-1} \hreps^k\right) \phi \psi \,\mathrm{d}\sigma \,\mathrm{d}t\\ & \quad + \int_{\oe }\hjeps^k(0) \uepsin \phi \psi(0) \,\mathrm{d}x.
\end{align*}
Passing to the limit $k \to \infty $, we obtain
\begin{align*}
     - \int_0^T& \langle \hjeps\ueps ,\phi \rangle_{H^1_{\partial \Omega}(\Oe)',H^1_{\partial \Omega}(\Oe)} \psi' \,\mathrm{d}x \,\mathrm{d}t + \int_0^T \int_{\oe } [\hDeps \nabla \ueps - \ueps \hAeps \hweps ] \cdot \nabla \phi \psi \,\mathrm{d}x \,\mathrm{d}t 
    \\
    &= \int_0^T   \int_{\Oe} \hjeps f(\hueps)\phi\psi \,\mathrm{d}x \,\mathrm{d}t - \rho \int_0^T \int_{\geps } \hjeps g\left(\hueps,\e^{-1} \hreps\right) \phi \psi \,\mathrm{d}\sigma \,\mathrm{d}t\\ & \quad + \int_{\oe }\hjeps(0) \uepsin \phi \psi(0) \,\mathrm{d}x.
\end{align*}
By density, we obtain $\mathcal{F}_3(\hreps,\hweps,\hueps) = \ueps$. The initial condition $\ueps(0) = \ueps^\mathrm{in}$ can be obtained by a similar argument as above and using an integration by parts in time. This implies the continuity of $\mathcal{F}_3$.
\end{proof}

\begin{remark}\label{RemarkGalerkinMethod}\
   The operator $\mathcal{F}_3$ is even Lipschitz continuous. This can be shown by similar arguments as in the proof of \cite[Theorem 3]{GahnPop2023Mineral} and the proof of Proposition \ref{prop:uniqueness} below. In both proofs, an estimate for a fixed point of $\mathcal{F}$ are given. However, it is straightforward to generalise these arguments to show the Lipschitz continuity. Here, we restrict to the continuity of $\mathcal{F}_3$, which is enough to guarantee the existence of a solution. For uniqueness of the weak solution, we can use the results from the proof of \cite[Theorem 3]{GahnPop2023Mineral} and, therefore, avoid repeating the same arguments, which shortens the uniqueness proof.
\end{remark}

\subsection{Existence of a solution for the micoscopic model}
\begin{proposition}\label{TheoremExistenceMicroModel}
There exists a weak solution $(\ueps,\reps,\weps,\peps)$ of the microscopic problem $\eqref{MicroModelFixedDomain}$.
\end{proposition}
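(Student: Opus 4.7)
The plan is to apply Sch\"afer's fixed point theorem to the operator $\mathcal{F} : X \to X$ with $X := L^2((0,T), H^\beta(\Oe))$ and $\beta \in (\tfrac12, 1)$. This reduces the existence of a weak solution to three verifications: (i) $\mathcal{F}$ is continuous; (ii) $\mathcal{F}$ maps bounded sets to relatively compact sets; (iii) the set $S := \{\hat u \in X : \hat u = \lambda \mathcal{F}(\hat u) \text{ for some } \lambda \in [0,1]\}$ is bounded in $X$. Once these hold, Sch\"afer's theorem yields a fixed point $\hat u_\e \in X$, and setting $\reps := \mathcal{F}_1(\hat u_\e)$, $\weps := \mathcal{F}_2(\reps)$, $\ueps := \mathcal{F}_3(\reps, \weps, \hat u_\e) = \hat u_\e$, together with the associated pressure $\qeps$ from the Stokes subproblem, provides a weak solution of \eqref{MicroModelFixedDomain} in the sense of Definition \ref{def:DefinitionWeakSolutionMicroModel}.

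Continuity of $\mathcal{F}$ is immediate from the continuity of the three building blocks: $\mathcal{F}_1$ is Lipschitz continuous by Lemmas \ref{LemmaF1welldefined} and \ref{lem:LipschitzContnuityF1}, $\mathcal{F}_2$ is (locally) Lipschitz by Lemmas \ref{lem:Stokes} and \ref{lem:Lip_L2}, and $\mathcal{F}_3$ is continuous by Lemma \ref{LemmaF3}. The codomains fit together so that $\mathcal{F}_1(\hat u) \in Y_\e$ and $(\mathcal{F}_1(\hat u), \mathcal{F}_2 \circ \mathcal{F}_1(\hat u)) \in Z_\e$, which is exactly what is required for $\mathcal{F}_3$ to be applicable. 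For the compactness in (ii), I would rely on the same a priori estimates used in the proof of Lemma \ref{LemmaF3}: for $\hat u$ ranging over a bounded set in $X$, one obtains uniform bounds on $u_\e := \mathcal{F}(\hat u)$ in $L^2((0,T), H^1_{\partial\Omega}(\Oe))$ together with a bound on $\partial_t(\hat J_\e u_\e)$ in $L^2((0,T), H^1_{\partial\Omega}(\Oe)')$. Using the uniform lower bound \eqref{eq:J>c_J} on $\hat J_\e$ (and the control of $\partial_t \hat J_\e$ from Lemma \ref{lem:EstimatesEpsTrafo}) one transfers this into a bound on $\partial_t u_\e$ in the same dual space, and an Aubin--Lions argument combined with the compact embedding $H^1(\Oe) \hookrightarrow\hookrightarrow H^\beta(\Oe)$ for $\beta < 1$ yields relative compactness of $\mathcal{F}(B)$ in $X$.

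For (iii), any $\hat u \in S$ together with $\hat r := \mathcal{F}_1(\hat u)$ and $\hat w := \mathcal{F}_2(\hat r)$ solves a $\lambda$-rescaled version of \eqref{eq:def:F3}, whose right-hand sides (both the volumetric source and the boundary reaction) carry a factor $\lambda \in [0,1]$ and thus are controlled uniformly in $\lambda$. A standard energy estimate, obtained by testing with $\hat u$, the coercivity \eqref{eq:coercivityFeps}, the uniform lower bound \eqref{eq:J>c_J} on $\hat J_\e$, the $L^\infty$ bound on $g$ from \eqref{eq:def:C_g}, the growth condition on $f$ in \ref{Assumptionf} and Gr\"onwall's inequality, yields a $\lambda$-uniform bound of $\hat u$ in $L^2((0,T), H^1(\Oe))$, hence in $X$. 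The main obstacle lies precisely in this a priori step: under the weak regularity of Definition \ref{def:DefinitionWeakSolutionMicroModel}, the advective term $\ueps \hat A_\e \hat w$ is not well defined when $\hat u$ is used as a test function, cf.~Remark \ref{Rem:Def_Weak_formulation}\ref{Rem:Def_Weak_formulation_Adv}. I would circumvent this by performing all energy estimates at the Galerkin level where $\hat u$ is an admissible test function; an integration by parts combined with the divergence condition $\nabla \cdot (\hat A_\e(\hat w + \partial_t \hat\psi_\e)) = 0$ and the boundary condition $\hat w = -\partial_t \hat r \, \nu$ on $\Ge$ makes the advective contribution cancel against a boundary term arising from $\partial_t(\hat J_\e \hat u^2)$, and the bounds then pass to the Galerkin limit.
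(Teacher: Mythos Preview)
Your proposal is correct and follows essentially the same route as the paper: Sch\"afer's fixed-point theorem applied to $\mathcal{F}$, with continuity from Lemmas \ref{LemmaF1welldefined}--\ref{LemmaF3}, compactness via Aubin--Lions from the $H^1$ and dual time-derivative bounds, and boundedness of the fixed-point set from the energy estimate carried out at the Galerkin level. One small slip: the boundary condition for $\hat w$ on $\Ge$ is $\hat w = 0$ (since $\hat w = \hat v - \partial_t \hat\psi_\e$ and $\partial_t \hat\psi_\e = -\partial_t \hat r\,\nu$ there), and the cancellation in \eqref{Eq:Adv_Term_Apriori} is between the advective term and the \emph{volume} term $-\tfrac12\int_{\Oe}\partial_t \hat J_\e\,|\hat u|^2$ (via $\nabla\cdot(\hat A_\e \hat w)=-\partial_t \hat J_\e$), the resulting boundary integral vanishing because $\hat w|_{\Ge}=0$ and $\hat u|_{\partial\Omega}=0$.
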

\begin{proof}
We apply Sch\"afer's fixed-point theorem (see e.g.\ \cite{EvansPartialDifferentialEquations}) to the operator $\mathcal{F}$. For this, we have to show that $\mathcal{F}$ is compact and continuous and that the set
\begin{align*}
    \mathcal{X}:= \left\{ \ueps \in L^2((0,T),H^{\beta}(\oe))\, : \, \ueps = \lambda \mathcal{F}(\ueps) \, \mbox{ for some } 0 \le \lambda \le 1 \right\}
\end{align*}
is bounded. Let $(\hueps^k)_{k\in \N} \subset L^2((0,T),H^{\beta}(\oe))$ be bounded and $\ueps^k := \mathcal{F}(\hueps^k)$. By similar arguments as in the proof of Lemma \ref{LemmaAprioriEstimates} in Section \ref{SectionAprioriEstimates} below, we can show that
\begin{align*}
    \|\partial_t \ueps^k\|_{L^2((0,T),H_{\partial\Omega}^1(\oe)')} + \|\ueps^k\|_{L^2((0,T),H^1(\oe))} \le C_{\e}
\end{align*}
for a constant $C_{\e}>0$ independent of $k$ (but depending on $\e$). Hence, the Aubin--Lions lemma, see \cite{Lions}, implies the existence of a convergent subsequence in $L^2((0,T),H^{\beta}(\oe))$ and, therefore, $\mathcal{F}$ is compact.

The continuity of the operator $\mathcal{F}$ follows from the continuity of the operators $\mathcal{F}_i$, $i=1,2,3$, see Lemma \ref{LemmaF1welldefined}, \ref{lem:Lip_L2}, and \ref{LemmaF3}.

It remains to show the boundedness of $\mathcal{X}$. We only have to consider $\ueps = \lambda \mathcal{F}(\ueps)$ for $\ueps \in L^2((0,T),H^{\beta}(\oe))$ and $\lambda \in (0,1]$. Hence, we have to replace $\ueps$ in the microscopic model $\eqref{MicroModelFixedDomain}$ with $\frac{\ueps}{\lambda}$. Then, the claim follows again by using similar arguments as in the proof of Lemma \ref{LemmaAprioriEstimates} by multiplying the right-hand side of the weak equation $\eqref{eq:weakForm:u_eps}$ by $\lambda$. Now, Sch\"afer's fixed-point theorem implies the existence of a fixed point and, therefore, the existence of a microscopic solution.
\end{proof}
\subsection{Uniqueness of a solution for the micoscopic model}

Now, we show uniqueness of the weak microscopic solution. In \cite[Theorem 3]{GahnPop2023Mineral}, uniqueness was established for the problem without convection. Hence, here we generalise this result, where the crucial new ingredient is the Lipschitz estimate from Lemma \ref{lem:Lip_L2}.

\begin{proposition}\label{prop:uniqueness}
The weak solution $(\ueps,\reps,\weps,\peps)$ of the microscopic problem $\eqref{MicroModelFixedDomain}$ obtained by Proposition \ref{TheoremExistenceMicroModel} is unique.
\end{proposition}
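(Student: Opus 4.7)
The plan is to assume that $(\ueps^j, \reps^j, \weps^j, \peps^j)$ for $j \in \{1,2\}$ are two weak solutions of \eqref{eq:WeakForm:Microproblem} sharing the same initial data, write $\delta \phi := \phi^1 - \phi^2$ for every quantity, and close an energy inequality of the form
\[
\tfrac12 \int_{\oe} \jeps^1(t) |\delta \ueps(t)|^2 \,\mathrm{d}x + c \int_0^t \|\nabla \delta \ueps\|_{L^2(\oe)}^2 \,\mathrm{d}s \le C_\e \int_0^t \Big(\|\delta \ueps\|_{L^2(\oe)}^2 + \|\delta \reps\|_{L^2(\oe)}^2 + \|\partial_t \delta \reps\|_{L^2(\oe)}^2\Big) \,\mathrm{d}s,
\]
which, combined with the Lipschitz estimate for $\reps$ from Lemma~\ref{lem:LipschitzContnuityF1}, yields $\delta \ueps = 0$ and $\delta \reps = 0$ via Gronwall's lemma. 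Uniqueness of $\weps$ and $\qeps$ then follows automatically from the unique solvability of the linear Stokes problem for given $\reps$ shown in Lemma~\ref{lem:Stokes}.

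First, I would subtract the two weak transport equations, rearranging each nonlinear product by the telescoping identity $a^1 b^1 - a^2 b^2 = \delta a \cdot b^1 + a^2 \cdot \delta b$, and test with $\phi_\e = \delta \ueps$ (which becomes admissible after the Steklov/time-regularisation argument outlined in Remark~\ref{Rem:Def_Weak_formulation}\ref{Rem:Def_Weak_formulation_Adv}). The leading diffusive term $\int_{\oe} D_\e^1 \nabla \delta \ueps \cdot \nabla \delta \ueps \,\mathrm{d}x$ is coercive by \eqref{eq:coercivityFeps}; the bilinear advective contribution $\int_{\oe} \delta \ueps \, \Aeps^1 \weps^1 \cdot \nabla \delta \ueps \,\mathrm{d}x$ is handled by integrating by parts and exploiting the identity $\nabla \cdot (\Aeps^1(\weps^1 + \partial_t \psieps^1)) = 0$ together with the boundary conditions on $\partial \Omega$ and $\geps$; the reaction term is controlled by the Lipschitz continuity of $f$ and the boundary contribution by that of $g$.

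The remaining residuals all involve differences of the transformation data ($\delta \Jeps$, $\delta D_\e$, $\delta \Aeps$, $\delta \partial_t \Jeps$), plus the crucial advective cross-term $\int_{\oe} \ueps^2 \, \delta(\Aeps \weps) \cdot \nabla \delta \ueps \,\mathrm{d}x$. The first group is estimated via the pointwise bounds $|\delta \Jeps|, |\delta D_\e|, |\delta \Aeps| \le C_\e \|\delta \reps\|_{L^\infty(\oe)}$ from Lemma~\ref{lem:EstimateShiftsReps}; together with the essential $L^\infty$ bound on $\ueps^2$ from Lemma~\ref{LemmaLinftyBoundUeps} and the uniform $H^1$-bound on $\weps^1$ from Lemma~\ref{LemmaAprioriEstimates}, these can be absorbed on the left by Young's inequality at the price of an $\int_0^t \|\delta \reps\|_{L^2}^2$ term. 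For the cross-term I would split $\delta(\Aeps \weps) = \delta \Aeps \cdot \weps^1 + \Aeps^2 \cdot \delta \weps$, treating the first piece as above and the second via the new Lipschitz estimate \eqref{eq:Lip:F2_L_infinity} of Lemma~\ref{lem:Lip_L2}, which bounds $\|\delta \weps\|_{L^2} + \|\nabla \delta \weps\|_{L^2}$ by $\|\delta \reps\|_{L^\infty((0,\cdot);L^2(\oe))} + \|\partial_t \delta \reps\|_{L^2}$.

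Finally, the $\e$-uniform Lipschitz estimate \eqref{Lip_Estimate_reps} (together with the trace bound \eqref{eq:TraceInequality} to absorb the surface norm of $\delta \ueps$ into $\|\nabla \delta \ueps\|_{L^2}$ with arbitrarily small prefactor, and with the pointwise bound \eqref{eq:EssentBound:dtR} applied to $\partial_t \delta \reps$ via the Lipschitz continuity of $g$) closes the loop by controlling both $\|\delta \reps\|$ and $\|\partial_t \delta \reps\|$ in terms of $\|\delta \ueps\|_{L^2((0,\cdot);H^1)}$. Inserting these into the energy inequality and choosing the Young parameters small enough to absorb all $\|\nabla \delta \ueps\|^2$ contributions yields a closed Gronwall inequality for $t \mapsto \|\delta \ueps(t)\|_{L^2(\oe)}^2$, forcing $\delta \ueps \equiv 0$ and $\delta \reps \equiv 0$. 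The main obstacle will be precisely the cross-term $\int \ueps^2 \Aeps^2 \, \delta \weps \cdot \nabla \delta \ueps \,\mathrm{d}x$: it couples the transport equation back to the Stokes system through $\delta \weps$, which in turn depends on $\partial_t \delta \reps$, and it is here that the essential boundedness of $\ueps^2$ and the freshly established Lipschitz property of $\mathcal{F}_2$ from Lemma~\ref{lem:Lip_L2} are indispensable to keep the $\e$-dependent constants under control.
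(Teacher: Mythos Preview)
Your proposal is correct and follows essentially the same strategy as the paper: subtract the two transport equations, test with $\delta\ueps$, use coercivity of $D_\e^1$, handle the quadratic advective piece $(\delta\ueps\,A_\e\weps,\nabla\delta\ueps)$ by integration by parts via $\nabla\cdot(A_\e\weps)=-\partial_t\jeps$, and control the remaining cross-terms by combining the $L^\infty$-bound on $\ueps$ (Lemma~\ref{LemmaLinftyBoundUeps}), the Lipschitz estimate for $\mathcal{F}_2$ (Lemma~\ref{lem:Lip_L2}), and the Lipschitz estimate for $\mathcal{F}_1$ (Lemma~\ref{lem:LipschitzContnuityF1}), before closing with Gronwall. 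The only cosmetic difference is the telescoping: you split $\delta(\ueps A_\e\weps)=\delta\ueps\,A_\e^1\weps^1+\ueps^2\,\delta(A_\e\weps)$, whereas the paper uses $\ueps^1 A_\e^1\delta\weps+\ueps^1\delta A_\e\,\weps^2+\delta\ueps\,A_\e^2\weps^2$; both lead to the same estimates.
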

\begin{proof}

Consider two microscopic solutions $(\ueps^j,\reps^j,\weps^j,\peps^j)$ for $j=1,2$. Since every problem associated to the operators $\mathcal{F}_i$, $i=1,2,3$, admits a unique solution, it follows that $\ueps^j$ are fixed points of $\mathcal{F}$. Hence, we have $\ueps^j = \mathcal{F}(\ueps^j)$, $\reps^j = \mathcal{F}_1(\ueps^j)$ and $\weps^j= \mathcal{F}_2(\reps^j)$. The triple $(\ueps,\weps,\reps)$ fulfills the a priori estimates of Section \ref{SectionAprioriEstimates}. This can be shown by the same arguments as in the proofs of Lemmas \ref{LemmaAprioriEstimates} and \ref{LemmaLinftyBoundUeps}. In particular, the estimates from Lemmas \ref{lem:EstimatesEpsTrafo}, \ref{lem:EstimateShiftsReps} and \ref{lem:LipschitzContnuityF1} are valid, as well as the inequality $\eqref{eq:Lip:F2_L_infinity} $.

Now, as in the proof of \cite[Theorem 3]{GahnPop2023Mineral} (see the last inequality in the proof in particular) with only slight modifications for the nonlinear right-hand sides with $f$ and $g$ in the transport equation, we obtain, by subtracting the weak equations for $\ueps^1$ and $\ueps^2$ and testing with $\delta \ueps$, for almost every $t \in (0,T)$ and a constant $c>0$ 
\begin{align}
\begin{aligned}\label{eq:lipschitz_estimate}
    \|\delta \ueps(t)\|_{L^2(\oe)} + c\|\delta & \nabla \ueps\|_{L^2((0,t)\times \oe)}  
    \\
    &\le C_{\e}  \|\delta \ueps\|_{L^2((0,t)\times \oe)} + \left( \delta(\ueps A_{\e} \weps), \nabla \delta \ueps \right)_{(0,t)\times \oe},
\end{aligned}
\end{align}
where $C_{\e}>0$ is a constant depending on $\e$ ($C_{\e}$ is a generic constant in what follows) and $A_{\e}^j$ is defined as in $\eqref{def:Aeps}$.  It remains to estimate the convective term on the right-hand side:
\begin{align*}
    \big( \delta&(\ueps A_{\e}  \weps), \nabla \delta \ueps \big)_{(0,t)\times \oe} 
    \\
    &= \left(\ueps^1 A_{\e}^1 \delta \weps,\nabla \delta \ueps\right)_{(0,t)\times \oe} + \left(\ueps^1 \delta A_{\e} \weps^2,\nabla \delta \ueps\right)_{(0,t)\times \oe} + \left(\delta \ueps   A_{\e}^2 \weps^2,\nabla\delta \ueps\right)_{(0,t)\times \oe}
    \\
    &=: I_1 + I_2 + I_3.
\end{align*}
First of all, using the essential bound for $A_{\e}^1$, see Lemma \ref{lem:EstimatesEpsTrafo}, and the essential bound for $\ueps^1$ from Lemma \ref{LemmaLinftyBoundUeps} in Section \ref{SectionAprioriEstimates} below, we obtain
\begin{align*}
I_1 &\le C_{\e} \|\ueps^1\|_{L^{\infty}((0,t) \times \oe))}\|\delta \weps \|_{L^{2}((0,t) \times \oe)} \|\nabla \delta \ueps\|_{L^2((0,t)\times \oe)}
\\
&\le C_{\e} \|\delta \weps\|_{L^{2}((0,t)\times\oe)} \|\nabla \delta \ueps\|_{L^2((0,t)\times \oe)}.
\end{align*}
Using estimate \eqref{eq:Lip:F2_L_infinity} from Lemma \ref{lem:Lip_L2}, as well as Lemma \ref{lem:LipschitzContnuityF1}, we obtain for arbitrary $\theta>0$  the existence of $C_{\e}(\theta)$ such that 
\begin{align*} 
I_1 &\le C_{\e} \left(\norm{\delta R_\e}{L^\infty((0,t) \times \Oe)} + \norm{\delta \partial_t R_\e}{L^2((0,t) \times \Oe)} \right)  \|\nabla \delta \ueps\|_{L^2((0,t)\times \oe)}
\\
&\leq C_\e \left\| \delta \ueps \right\|_{L^2((0,t),L^1(\geps))}  \|\nabla \delta \ueps\|_{L^2((0,t)\times \oe)}
\\
&\leq C_{\e}(\theta) \|\delta \ueps \|_{L^2((0,t) \times \oe)}^2 + \theta \|\delta \nabla \ueps \|_{L^2((0,t)\times \oe)}^2.
\end{align*}
Next, using again Lemma \ref{lem:EstimateShiftsReps} and \ref{lem:LipschitzContnuityF1} and the a priori bounds for $\ueps^1$ and $\weps^2$, we get with the H\"older inequality
\begin{align*}
I_2 &\le C_{\e}\|\ueps^1\|_{L^{\infty}((0,t) \times \oe))} \|\delta A_{\e}\|_{L^{\infty}((0,t)\times \oe)} \|\weps^2\|_{L^{\infty}((0,t), L^2(\oe))} \|\nabla \delta \ueps \|_{L^2((0,t)\times \oe)}
\\
&\le C_{\e} \|\delta \ueps\|_{L^1((0,t)\times \Ge)} \|\nabla \delta \ueps \|_{L^2((0,t)\times \oe)}
\\
&\le C_{\e}(\theta) \|\delta \ueps \|_{L^2((0,t)\times \oe )}^2 + \theta \|\delta \nabla \ueps \|_{L^2((0,t)\times \oe)}^2.
\end{align*}
Finally, for $I_3$, we use integration by parts, the zero boundary condition of $\ueps$ on $\partial \Omega$ resp.\ $\weps $ on $\geps$ as well as $\partial_t \jeps = \nabla \cdot (A_{\e}\weps)$ and Lemma \ref{lem:EstimatesEpsTrafo} to obtain
\begin{align*}
    I_3 = \frac12  (\delta \ueps \partial_t \jeps^2 , \delta \ueps )_{(0,t)\times \oe} \le C_{\e} \|\delta \ueps\|_{L^2((0,t)\times \oe)}^2.
\end{align*}
 Altogether, we obtain (perhaps after some redefinition of $\theta$)
\begin{align*}
\big(\delta & (\ueps A_{\e} \weps), \nabla \delta \ueps\big)_{(0,t)\times \oe)} 
\\
&\le C_{\e}(\theta)\left(\|\delta \ueps \|^2_{L^2((0,t)\times \oe)} + \|\delta \hueps \|^2_{L^2((0,t),H^{\beta}(\oe))} \right) + \theta \|\delta \nabla \ueps \|^2_{L^2((0,t)\times \oe)}.
\end{align*}
Plugging this estimate into $\eqref{eq:lipschitz_estimate}$ and choosing $\theta$ small enough, such that we can absorb the gradient term on the right-hand side by the left-hand side, we obtain 
\begin{align*}
    \|\delta \ueps(t)\|_{L^2(\oe)} + c\|\delta \nabla \ueps\|_{L^2((0,t)\times \oe)}  \le C_{\e} \|\delta \ueps \|_{L^2((0,t) \times \oe)} .
\end{align*}
Application of the Gronwall inequality leads to the desired result.
\end{proof}

\section{A priori estimates for the microscopic solution}
\label{SectionAprioriEstimates}

In this section, we show uniform a priori estimates for  the weak solution $(\ueps,\weps,\reps)$ of the microscopic problem $\eqref{MicroModelFixedDomain}$. These estimates are the basis for the two-scale compactness results stated in Proposition \ref{prop:Compactness_dt} and Proposition \ref{prop:Comapctness} of Section \ref{SubsectionCompactness} below, which allow us to pass to the limit $\e \to 0$ in the weak formulation \eqref{eq:WeakForm:Microproblem}.  The proof follows the following scheme: The uniform bound for $\reps$ is a direct consequence of the essential boundedness of $g$. Therefore, $\reps$ and $\partial_t \reps$ can be estimated independently of $\ueps$. The bounds for $\reps$ allow to control the velocity $\weps$ and the pressure $\peps$. Finally, we are able to estimate the concentration $\ueps$.  

\begin{lemma}\label{LemmaAprioriEstimates}
The weak solution $(\ueps,\weps,\peps\reps)$   of the microscopic problem $\eqref{MicroModelFixedDomain}$ fulfills the following a priori estimates
\begin{align}
    \label{AprioriEstimatesReps}
     \|\reps\|_{L^{\infty}((0,T)\times \oe) } + \|\partial_t \reps \|_{L^{\infty}((0,T)\times \oe)} &\le C \e,
     \\
     \label{AprioriEstimatesWeps}
     \|\weps \|_{L^\infty((0,T),L^2(\Oe))}+ \e \|\nabla \weps \|_{L^\infty((0,T),L^2(\Oe))} + \|\peps\|_{L^{\infty}((0,T),L^2(\oe))} &\leq C,
     \\
     \label{AprioriEstimatesUeps}
     	\e \|\partial_t \ueps\|_{L^2((0,T), H^1(\Oe)')} + 
	\|\ueps\|_{L^\infty((0,T),L^2(\Oe))}+ \|\nabla \ueps\|_{L^2((0,T)\times \oe)} &\leq C
	\\
	\label{AprioriEstimatesJepsUeps}
	\|\partial_t (J_\e \ueps)\|_{L^2((0,T),H^1_{\partial\Omega}(\Oe)')} + \|\jeps \ueps\|_{L^{\infty}((0,T),L^2(\oe))} +  \e \|\nabla (\jeps \ueps) \|_{L^2((0,T)\times \oe)} &\leq C
\end{align}

\end{lemma}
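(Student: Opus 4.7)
The plan is to derive the four estimates in the order of their coupling, working at the Galerkin level (cf.\ Remark \ref{Rem:Def_Weak_formulation}) so that $\ueps$ is an admissible test function in the transport equation. The bound \eqref{AprioriEstimatesReps} decouples from the rest: from the ODE \eqref{eq:weakForm:r_eps} and $|g|\le C_g$ one reads off $|\partial_t \reps|\le \e C_g$ pointwise, while the sign conditions \eqref{eq:g_Bound_below}--\eqref{eq:g_Bound_above} together with the initial bound confine $\reps$ to $[\e R_{\min},\e R_{\max}]$. The full set of transformation bounds in Lemma \ref{lem:EstimatesEpsTrafo} is then in force and will be used throughout.

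For the Stokes bound \eqref{AprioriEstimatesWeps}, I would test \eqref{eq:weak_w_eps_fixed_domain} with $\eta_\e = \weps$. Using that $\weps$ has vanishing trace on $\Ge$ (since $\veps = -\partial_t \reps\,\nu = \partial_t \psieps$ on $\Ge$ by Remark \ref{RemarksTrafoVelocity}(ii), so the subtraction $\weps = \veps-\partial_t \psieps$ eliminates the boundary data), the pressure term reduces to $\int \qeps\,\partial_t \jeps$ via the divergence identity $\nabla\cdot(\Aeps \weps)=-\partial_t \jeps$. Coercivity \eqref{eq:Sym-coercivityFeps} then controls $\|\nabla \weps\|_{L^2}^2$, while the right-hand side is bounded via \ref{ass:h}, Lemma \ref{lem:EstimatesEpsTrafo}, and the $H^1(\Omega)$-extension of $p_{\mathrm{b}}$. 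The pressure is extracted pointwise in time from the inf--sup condition \eqref{eq:Inf-Sup} by estimating the supremum through the momentum equation, and Young's inequality separates the two bounds. The $L^\infty(0,T)$-regularity of the data then lifts these pointwise bounds to the stated norms.

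The most delicate step is the energy estimate for $\ueps$. Testing \eqref{eq:weakForm:u_eps} with $\phi_\e = \ueps$ on the Galerkin level, the time pairing contributes $\tfrac12 \tfrac{d}{dt}\int_{\oe} \jeps \ueps^2 + \tfrac12\int_{\oe}\partial_t \jeps\,\ueps^2$, and integration by parts in the advective term yields
\begin{equation*}
-\int_{\oe} \ueps \Aeps \weps\cdot\nabla \ueps = \tfrac12\int_{\oe} \ueps^2\,\nabla\cdot(\Aeps \weps) - \tfrac12\int_{\partial\oe} (\Aeps \weps\cdot\nu)\,\ueps^2\,d\sigma.
\end{equation*}
The boundary contribution vanishes since $\weps|_{\Ge}=0$ and $\ueps|_{\partial\Omega}=0$, and the volume piece equals $-\tfrac12\int\partial_t \jeps\,\ueps^2$ by the divergence identity, so it exactly cancels the problematic term from the time pairing. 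The remaining identity is closed by coercivity \eqref{eq:coercivityFeps} for the diffusion, the growth bound \ref{Assumptionf} on $f$ for the volume reaction, and the $\e$-scaled trace inequality together with $|g|\le C_g$ for the surface source; the gradient term generated by the trace inequality is absorbed by the diffusion. Gronwall's inequality then yields the $L^\infty L^2$ and $L^2 H^1$ bounds on $\ueps$, and hence also on $\jeps \ueps$.

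Finally, for $\partial_t(\jeps \ueps)\in L^2((0,T);H^1_{\partial\Omega}(\oe)')$, I would estimate each term in the weak form directly against $\phi\in H^1_{\partial\Omega}(\oe)$; the only non-standard piece is the advective term, handled via $|\int \ueps \Aeps \weps\cdot\nabla\phi|\le \|\ueps\|_{L^\infty}\|\Aeps\|_{L^\infty}\|\weps\|_{L^2}\|\nabla\phi\|_{L^2}$ using the essential bound of Lemma \ref{LemmaLinftyBoundUeps} and the $L^\infty L^2$ bound on $\weps$. The scaled bound $\e\|\partial_t \ueps\|_{L^2 H^{-1}}\le C$ then follows from the decomposition $\partial_t \ueps = \jeps^{-1}\bigl(\partial_t(\jeps \ueps)-\partial_t \jeps\,\ueps\bigr)$, since multiplication by $\jeps^{-1}$ acts on $H^{-1}(\oe)$ with operator norm $O(\e^{-1})$ (because $\|\nabla \jeps^{-1}\|_{L^\infty}=O(\e^{-1})$ by Lemma \ref{lem:EstimatesEpsTrafo}); similarly $\e\|\nabla(\jeps \ueps)\|_{L^2 L^2}\le C$ follows from the product rule and $\|\nabla \jeps\|_{L^\infty}=O(\e^{-1})$. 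The principal obstacle is the nonlinear advective term: in the energy step its control requires \emph{simultaneously} the vanishing trace of $\weps$, the divergence identity $\nabla\cdot(\Aeps \weps) = -\partial_t \jeps$, and the Dirichlet condition on $\partial\Omega$ for the exact cancellation, while in the time-derivative estimate it relies on the separately-established essential bound for $\ueps$.
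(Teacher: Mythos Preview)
Your proposal is correct and follows essentially the same route as the paper's proof: the same decoupled bound on $\reps$ from $|g|\le C_g$; the Stokes bound via coercivity \eqref{eq:Sym-coercivityFeps} and the inf--sup condition \eqref{eq:Inf-Sup} (the paper simply cites \cite{wiedemann2021homogenisation} here, but your sketch is the standard argument behind that citation); the energy estimate for $\ueps$ via the exact cancellation \eqref{Eq:Adv_Term_Apriori} between the advective term and the $\partial_t\jeps$ contribution; and the time-derivative bound via Lemma \ref{LemmaLinftyBoundUeps} for the advective term. One small point worth tightening in your Stokes argument: testing with $\weps$ leaves the coupled term $\int_{\oe}\qeps\,\partial_t\jeps$ on the right-hand side, so the velocity and pressure bounds are not obtained sequentially but rather by combining the coercivity inequality with the inf--sup estimate and absorbing via Young --- which is what you indicate, but it deserves an explicit line since $\weps$ does not lie in the kernel of the divergence constraint.
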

\begin{proof}
\noindent\textit{Bound for $\reps$:} We start with the bounds for the radius $\reps$.  This follows immediately from the essential boundedness of $g$: For for almost every $(t,x) \in (0,T)\times \oe$, we have
\begin{align*}
    |\partial_t \reps(t,x) | \le \e \|g\|_{L^{\infty}(\R^2)} \le \e C_g.
\end{align*}
This also implies the $L^\infty$-bound for $\reps$ (which was already shown in Lemma \ref{LemmaF1welldefined}).
\\
\noindent\textit{Bound for $\weps$ and $\peps$:} 
Due to the bound $\eqref{AprioriEstimatesReps}$ for $\reps$ showed above, we can apply Lemma \ref{lem:EstimatesEpsTrafo}. Now, the results follow directly from \cite[Theorem 3.1]{wiedemann2021homogenisation}.
\\
\noindent\textit{Bound for $\ueps$:} Here, we use similar arguments as in the proof of \cite[Lemma 6]{WIEDEMANN2023113168}, so we skip some details. The main new part is the convective term. 
We divide the proof into two steps, showing the estimate for 
$\|\ueps\|_{L^\infty((0,T),L^2(\Oe))}$ and $\|\nabla \ueps\|_{L^2((0,T)\times \oe)}$ first (which then directly also implies the estimate for 
 $\e \|\nabla (\jeps \ueps) \|_{L^2((0,T)\times \oe)}$):\\
\noindent{\textit{Step A:}} We begin by testing the weak equation for $\ueps$  $\eqref{eq:weakForm:u_eps}$ with $\ueps$ to obtain almost everywhere in $(0,T)$
\begin{align*}
 \langle \partial_t &(\jeps \ueps), \ueps \rangle_{H^1(\oe)',H^1(\oe)} + \int_{\oe} D_{\e} \nabla \ueps \cdot \nabla \ueps \,\mathrm{d}x = 
 \\
 &\int_{\oe} \ueps A_{\e} \weps \cdot \nabla \ueps \,\mathrm{d}x + \int_{\oe } \jeps f(\ueps) \ueps \,\mathrm{d}x - \rho \int_{\Ge} g\left(\ueps,\e^{-1} \reps\right) \jeps \ueps \,\mathrm{d}\sigma.
\end{align*}
Using the uniform coercivity of $D_{\e}$, see $\eqref{eq:coercivityFeps}$, we get for almost every $t \in (0,T)$
\begin{align*}
\frac12 \frac{\mathrm{d}}{\mathrm{d}t} &\left\| \sqrt{\jeps} \ueps (t) \right\|_{L^2(\oe)}^2 + \alpha \|\nabla \ueps \|^2_{L^2(\oe)}
\\
&\le -  \frac12 \int_{\oe} \partial_t \jeps |\ueps|^2 \,\mathrm{d}x  + \int_{\oe} \ueps A_{\e} \weps \cdot \nabla \ueps \,\mathrm{d}x\\ &\quad + \int_{\oe } \jeps f(\ueps) \ueps \,\mathrm{d}x - \rho \int_{\Ge} g\left(\ueps,\e^{-1} \reps\right) \jeps \ueps \,\mathrm{d}\sigma. 
\end{align*}
Using the uniform bounds for $\reps$, $\jeps$ (see Lemma \ref{lem:EstimatesEpsTrafo}), the growth condition on $f$ from assumption \ref{Assumptionf} and the boundedness of $g$ from assumption \ref{eq:def:C_g} together with the trace inequality, we obtain for arbitrary $\theta>0$ a constant $C_{\theta}>0$ such that 
\begin{align*}
   \int_{\oe } \jeps & f_{\e} \ueps \,\mathrm{d}x - \rho \int_{\Ge} g\left(\ueps,\e^{-1} \reps\right) \jeps \ueps \,\mathrm{d}\sigma 
   \\
   &\le C_{\theta} \left(1 + \|\ueps\|^2_{L^2(\oe)} \right) + \theta \e^2 \|\nabla \ueps\|_{L^2(\oe)}^2.
\end{align*}
For the convective term, we use integration by parts and $\nabla \cdot (A_{\e}\weps) = -\partial_t \jeps$ (see Remark \ref{RemarksTrafoVelocity})  to obtain
\begin{align}\label{Eq:Adv_Term_Apriori}
-  \frac12 \int_{\oe} \partial_t \jeps |\ueps|^2 \,\mathrm{d}x  + \int_{\oe} \ueps A_{\e} \weps \cdot \nabla \ueps \,\mathrm{d}x = \frac12 \int_{\partial \oe } |\ueps|^2 A_{\e}\weps \cdot \nu \,\mathrm{d}\sigma = 0.
\end{align}
For the last equality, we used $\weps = 0$ on $\Ge$ and $\ueps= 0$ on $\partial \Omega$.
Choosing $\theta >0$ small enough, integration by parts and Gronwall's inequality imply 
\begin{align*}
	\|\ueps\|_{L^\infty((0,T),L^2(\Oe))}+ \|\nabla \ueps\|_{L^2((0,T)\times \oe)} \leq C.
\end{align*}
Now, from the properties of $\jeps $ and $\nabla \jeps$ in Lemma \ref{lem:EstimatesEpsTrafo}, we get
\begin{align*}
 \e \|\nabla (\jeps \ueps) \|_{L^2((0,T)\times \oe)} \leq C.
\end{align*}
\noindent{\textit{Step B:}}
Now, we show the estimate for $\partial_t (\jeps \ueps)$. 
For this, we use the uniform bound for $\ueps$ in $L^{\infty}((0,T)\times \oe)$, which is shown in Lemma \ref{LemmaLinftyBoundUeps} below, for the proof of which we only need the regularity of the weak solution but no $\e$-uniform a priori bounds. However, we emphasise that for the proof of the existence of a weak microscopic solution we used the Galerkin method, see Lemma \ref{LemmaF3}, and for the Galerkin approximation we have no (uniform) $L^\infty$-bound (the proof of Lemma \ref{LemmaLinftyBoundUeps} is not valid for the Galerkin approximation). We also obtain the regularity for $\partial_t (\jeps \ueps)$ without the $L^\infty$-bound for $\ueps$, see Remark \ref{Rem:RegTimeUeps} below. 

We test $\eqref{eq:weakForm:u_eps}$ with $\phi \in H^1(\oe)$ such that $\|\phi\|_{H^1(\oe)} \le 1$. We only estimate the convective term. The estimate for the other terms follows straightforwardly. 
The $L^{\infty}$-bound for $\ueps$ is obtained from Lemma \ref{LemmaLinftyBoundUeps} using the essential bound for $A_{\e}$ (see Lemma \ref{lem:EstimatesEpsTrafo}) and the H\"older inequality for almost every $t \in (0,T)$. More concretely, we obtain $\ueps(t) \in L^{\infty}(\oe)$ and
\begin{align*}
   \|\ueps(t)\|_{L^{\infty}(\oe)} \le \|\ueps\|_{L^{\infty}((0,T)\times \oe)} \le C 
\end{align*}
for almost every $t \in (0,T)$.
(We emphasise that, in general, the mapping $t \mapsto \|\ueps(t)\|_{L^{\infty}(\oe)}$, in other words $\ueps:(0,T)\rightarrow L^{\infty}(\oe)$, is not measurable.) Furthermore, we obtain for almost every $t \in (0,T)$
\begin{align*}
    \int_{\oe} \left( \ueps A_{\e} \weps \cdot \nabla \phi \right)(t,x) \,\mathrm{d}x &\le  C\|\ueps(t)\|_{L^{\infty}(\oe)} \|\weps(t)\|_{L^2(\oe)} \|\nabla \phi\|_{L^2(\oe)}
    \\
    &\le C.
\end{align*}
Here, we also used the $L^\infty$-bound with respect to time of $\weps$.  Hence, we obtain for almost every $t \in (0,T)$ that
\begin{align*}
    \left|\left\langle \partial_t (\jeps \ueps)(t),\phi \right\rangle\right| \le C \left( 1 + \|\nabla \ueps(t)\|_{L^2(\oe)} \right)
\end{align*}
and, therefore, 
\begin{align*}
    \left\|\partial_t (\jeps\ueps)(t)\right\|_{H^1_{\partial\Omega} (\oe)'}\le C \left( 1 + \|\nabla \ueps(t)\|_{L^2(\oe)} \right).
    \end{align*}
    Using the a priori bound for $\nabla \ueps$, we obtain further
    \begin{align*}
        \left\|\partial_t (\jeps\ueps) \right\|_{L^2((0,T),H^1_{\partial \Omega}(\oe)')} \le C.
    \end{align*}
Now, the estimate for $\partial_t \ueps $ follows from the product rule and the bound for $\e \nabla \jeps$ in Lemma \ref{lem:CellTrafo}. A similar argument is valid for $\jeps\ueps$ and $\nabla(\jeps\ueps)$. Thus, the proof is complete except for the verification of Lemma \ref{LemmaLinftyBoundUeps} which is undertaken below.
\end{proof}
\begin{remark}\label{Rem:RegTimeUeps}
To obtain the $\e$-uniform bound for $\partial_t (\jeps \ueps)$ in Lemma \ref{LemmaAprioriEstimates}, we need the uniform $L^{\infty}$-bound for $\ueps$ from Lemma \ref{LemmaLinftyBoundUeps} below. However, in the Galerkin method in the proof of Lemma \ref{LemmaF3} (where we are not allowed to use Lemma \ref{LemmaLinftyBoundUeps}), we can argue in the following way to obtain a bound for $\partial_t (\jeps \ueps)$ (not uniformly with respect to $\e$): From the \textit{a priori} bounds already obtained for $\weps$ and $\ueps$ in Step A of the proof above, we get
\begin{align*}
    \left|\int_{\Oe}\ueps \Aeps \weps \cdot \nabla \phi \,\mathrm{d}x \right| &\le C_{\e} \|\ueps\|_{L^4(\Oe)}\|\weps\|_{L^4(\Oe)}\|\nabla \phi\|_{L^2(\Oe)}
    \\
    &\le C_{\e} \|\ueps\|_{H^1(\Oe)} \|\weps \|_{H^1(\Oe)}
\end{align*}
for a constant $C_{\e}$ depending on $\e$.
Now, arguing as in the proof above, we obtain with the $L^\infty((0,T),H^1(\Oe)^n)$-bound for $\weps$ and the $L^2((0,T),H^1(\Oe))$-bound for $\ueps$ the estimate
\begin{align*}
    \|\partial_t (\jeps \ueps)\|_{L^2((0,T),H^1_{\partial\Omega} (\Oe)')} \le C_{\e}.
\end{align*}
\end{remark}

We now show the essential bound for $\ueps$, which holds uniformly with respect to the homogenisation parameter $\e$. Further, under some additional assumptions on the data, the non-negativity of the concentration $\ueps$ also follows.

\begin{lemma}\label{LemmaLinftyBoundUeps}
The solution $\ueps$ is  uniformly essentially bounded. More precisely, there exists a constant $C>0$ (independent of $\e$) such that
\begin{align}
    \|\ueps \|_{L^{\infty}((0,T)\times \oe)} \le C.
\end{align}
Under the additional assumptions on the data
\begin{align}
    f(u)(u)^- \le |(u)^-|^2, \qquad g(u,R)(u)^- \geq 0, \qquad \ueps^\mathrm{in} \geq 0
\end{align}
for every $R,u\in \R$, where $(\cdot)^- = \min\{\cdot,0\}$, the solution $\ueps$ is non-negative.
\end{lemma}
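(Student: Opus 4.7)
The plan is to establish the two parts by different methods: the uniform essential bound via a Moser (Alikakos) iteration starting from the $L^\infty((0,T);L^2(\Oe))$-estimate of Lemma \ref{LemmaAprioriEstimates}, and the non-negativity by a direct energy estimate with the negative-part test function.

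For the non-negativity I would test \eqref{eq:weakForm:u_eps} with $\phi = (\ueps)^-$. The time derivative yields, by the standard chain rule, $\tfrac{1}{2}\tfrac{d}{dt}\int_\Oe \jeps ((\ueps)^-)^2\,dx + \tfrac{1}{2}\int_\Oe \partial_t \jeps\,((\ueps)^-)^2\,dx$, while the convective contribution $-\int_\Oe \ueps\, \Aeps \weps \cdot \nabla (\ueps)^- \,dx$ reduces, after integration by parts together with $\nabla\cdot(\Aeps \weps) = -\partial_t \jeps$, $\weps|_{\Ge} = 0$ and $\ueps|_{\partial\Omega}=0$, to $-\tfrac{1}{2}\int_\Oe \partial_t \jeps\,((\ueps)^-)^2\,dx$, so the two $\partial_t\jeps$-contributions cancel exactly. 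Coercivity of $\Deps$ then provides the gradient term, and the sign hypotheses $f(u)(u)^- \le |(u)^-|^2$ and $g(u,R)(u)^- \ge 0$ (the latter making the boundary term favourably signed and hence discardable) yield
\[
\tfrac{1}{2}\tfrac{d}{dt}\int_\Oe \jeps ((\ueps)^-)^2\,dx + \alpha\|\nabla(\ueps)^-\|_{L^2(\Oe)}^2 \le C\|(\ueps)^-\|_{L^2(\Oe)}^2 .
\]
Grönwall combined with $(\ueps^{\mathrm{in}})^- = 0$ forces $(\ueps)^- \equiv 0$.

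For the essential bound I would fix $M_0 \coloneqq \|\ueps^{\mathrm{in}}\|_{L^\infty(\Oe)}$, uniformly bounded in $\e$ by (A2), and for $M \ge M_0$ and $p \ge 2$ test \eqref{eq:weakForm:u_eps} with $\phi = v^{p-1}$, where $v \coloneqq (\ueps - M)^+$. Decomposing $\ueps = v + M$ on $\{v>0\}$, the same cancellation as above, now applied to the $M\partial_t\jeps$-term arising from the time derivative and the convective counterpart obtained by integration by parts, produces the clean identity
\[
\tfrac{1}{p}\tfrac{d}{dt}\int_\Oe \jeps v^p\,dx + \tfrac{4(p-1)\alpha}{p^2}\int_\Oe |\nabla v^{p/2}|^2\,dx \le \int_\Oe \jeps f(\ueps) v^{p-1}\,dx - \rho\int_{\Ge} \jeps g\, v^{p-1}\,d\sigma,
\]
where the diffusion was rewritten through the chain rule $v^{p-2}|\nabla v|^2 = \tfrac{4}{p^2}|\nabla v^{p/2}|^2$ together with coercivity of $\Deps$. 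The reaction contribution is controlled by $C(1+M)\int_\Oe v^p\,dx + C(1+M)|\Oe|$ through the linear growth of $f$, and the boundary contribution by the $\e$-scaled trace inequality applied to $w = v^{p/2}$ combined with Young's inequality $v^{p-1} \le \tfrac{p-1}{p}v^p + \tfrac{1}{p}$, absorbing the resulting gradient portion into the coercive term on the left.

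Together with the $\e$-uniform Sobolev embedding on the perforated domain $\Oe$, this produces an iterative inequality of the form $\|v\|_{L^\infty((0,T);L^{p_{k+1}}(\Oe))} \le C_k \|v\|_{L^\infty((0,T);L^{p_k}(\Oe))}^{\beta_k}$ with $p_k \nearrow \infty$ and $\beta_k > 1$. Starting from the $L^\infty((0,T);L^2(\Oe))$-bound of Lemma \ref{LemmaAprioriEstimates} and choosing $M$ large enough, independently of $\e$, so that the base quantity lies below the threshold of a De Giorgi iteration lemma, one concludes $v \equiv 0$, hence $\ueps \le M$ almost everywhere with $M$ uniform in $\e$. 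The main obstacle is ensuring $\e$-uniformity of all constants throughout the iteration: the trace inequality carries $\e^{\pm 1}$ weights which must be carefully balanced against the coercivity factor $(p-1)/p^2$ coming from the diffusion, and the Sobolev constant on $\Oe$ requires the extension operators of periodic-homogenisation theory to be $\e$-independent.
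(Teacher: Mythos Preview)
Your non-negativity argument is correct and coincides with the paper's: test with $(\ueps)^-$, use $\nabla\cdot(\Aeps\weps)=-\partial_t\jeps$ together with the boundary conditions to make the convective and Jacobian-derivative contributions cancel, then apply Gronwall.

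For the $L^\infty$-bound you take a genuinely different route. The paper does not iterate in~$p$; it runs a De Giorgi level-set argument with an exponential time weight. Setting $W=e^{-\omega t}\ueps$ and testing \eqref{eq:weakForm:u_eps} with $e^{-\omega t}(W-k)^+$, the paper exploits the very same cancellation you identify between the $\partial_t\jeps$-terms and the convection, chooses $\omega$ large to kill the $L^2$ remainder, and arrives at the level-set inequality
\[
\|(W-k)^+(t)\|_{L^2(\Oe)}^2+\|\nabla(W-k)^+\|_{L^2((0,t)\times\Oe)}^2\le Ck^2\int_0^t|\{W(s)>k\}|\,\mathrm{d}s,
\]
which is then fed into \cite[II, Theorem~6.1]{Ladyzenskaja}. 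Your Moser/Alikakos scheme with test function $((\ueps-M)^+)^{p-1}$ is a legitimate alternative, and your derivation of the clean energy identity (including the cancellation of the $M\partial_t\jeps$ term against its convective counterpart) is correct. The paper's method has the advantage of outsourcing the $\e$-uniform iteration to a black-box reference; yours is more self-contained but forces you to track the $p$-dependence of all constants, including those coming from the $\e$-scaled trace inequality.

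There is, however, a conflation at the end of your sketch. A Moser iteration in~$p$ yields a bound of the type $\|v\|_{L^\infty}\le C\,\|v\|_{L^2}$ (the product of the step constants converges), not $v\equiv 0$; the ``threshold of a De Giorgi iteration lemma'' you invoke pertains instead to a recursion over \emph{truncation levels} $k_j\nearrow M$, where one shows that the super-level sets shrink super-geometrically. Both schemes are available here, but you should commit to one: either iterate in~$p$ with fixed $M$ and conclude a finite $L^\infty$ bound (then no smallness of the base is required, and the claim $v\equiv 0$ is wrong), or iterate over levels $k_j$ as the paper does. As written, your final paragraph mixes the two.
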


\begin{proof}
We define for $\omega \geq 0$ the function $W:= \mathrm{e}^{-\omega t} \ueps$ and for $k \geq 1$  the functions $W_k:= W - k$ and $W_k^+ := (W - k)^+$, where $(\cdot)^+:= \max(\cdot,0)$. Choosing $\mathrm{e}^{-\omega t} W_k^+$ as a test function in $\eqref{eq:weakForm:u_eps}$, we obtain
\begin{align}
\begin{aligned}\label{AuxiliaryEquationLinfty}
    \int_0^t \langle & \partial_t (\jeps \ueps) , \mathrm{e}^{-\omega s} W_k^+\rangle \,\mathrm{d}s + (D_{\e} \nabla \ueps , \mathrm{e}^{-\omega s} \nabla W_k^+ )_{(0,t)\times \Oe}
    \\
    =& \left( A_{\e}\weps \ueps , \mathrm{e}^{-\omega s } \nabla W_k^+ \right)_{(0,t)\times \Oe} + (\jeps f(\ueps), \mathrm{e}^{-\omega s} W_k^+ )_{(0,t)\times \Oe} 
    \\&+ \rho\left( \Jeps g(\ueps, \e^{-1} \Reps), \mathrm{e}^{-\omega s} W_k^+\right)_{(0,t)\times \geps}.
\end{aligned}
\end{align}
Using the identity 
\begin{align*}
    \mathrm{e}^{-\omega t} \partial_t (\jeps \ueps) = \partial_t \left[\jeps \left(\mathrm{e}^{-\omega t} \ueps - k \right)\right] + \omega \mathrm{e}^{-\omega t} \jeps \ueps + k \partial_t \jeps 
\end{align*}
in $L^2((0,T);H^1_{\partial \Omega} (\Oe)')$ and similar arguments as in \cite[Lemma 3.2]{Wachsmuth2016}, we get (for $k $ large enough)
\begin{align*}
    \int_0^t \langle \partial_t (\jeps \ueps) , \mathrm{e}^{-\omega s }W_k^+ \rangle \,\mathrm{d}s &= \int_0^t \langle \mathrm{e}^{-\omega s }\partial_t (\jeps \ueps) , W_k^+ \rangle \,\mathrm{d}s 
    \\
    &= \int_0^t \langle \partial_t \left[\jeps W_k \right], W_k^+ \rangle \,\mathrm{d}s + \left( \omega \mathrm{e}^{-\omega s} \jeps \ueps , W_k^+\right)_{(0,t)\times \Oe} + \left(k \partial_t \jeps ,W_k^+ \right)_{(0,t)\times \Oe} 
    \\
    &= \frac12 \left\Vert \sqrt{\jeps} W_k^+(t) \right\Vert_{L^2(\Oe)}^2 - \frac12 \underbrace{ \left\Vert \sqrt{\jeps} W_k^+(0) \right\Vert_{L^2(\Oe)}^2}_{= 0} + \frac12 \int_0^t \int_{\Oe}  \partial_t \jeps W_k W_k^+ \,\mathrm{d}x \,\mathrm{d}s
    \\
    &\quad +  \left( \omega \mathrm{e}^{-\omega s} \jeps \ueps , W_k^+\right)_{(0,t)\times \Oe} + \left(k \partial_t \jeps ,W_k^+ \right)_{(0,t)\times \Oe} .
\end{align*}
For the second term in $\eqref{AuxiliaryEquationLinfty}$, we obtain with the $\e$-uniform coercivity of $D_{\e}$ from $\eqref{eq:coercivityFeps}$
\begin{align*}
    (D_{\e} \nabla \ueps , \mathrm{e}^{-\omega s} \nabla W_k^+ )_{(0,t)\times \Oe } &= (D_{\e} \nabla W_k , \nabla W_k^+)_{(0,t)\times \Oe } = (D_{\e} \nabla W_k^+ , \nabla W_k^+ )_{(0,t)\times \Oe }
    \\
    &\geq c_0 \Vert \nabla W_k^+ \Vert^2_{L^2((0,t)\times \Oe)}.
\end{align*}
Hence, we obtain 
\begin{align}
\begin{aligned}
    \frac12 &\left\Vert \sqrt{\jeps} W_k^+(t) \right\Vert_{L^2(\Oe)}^2 + c_0 \Vert \nabla W_k^+ \Vert^2_{L^2((0,t)\times \Oe)}
    \\
    \le& \left( A_{\e}\weps \ueps , \mathrm{e}^{-\omega s } \nabla W_k^+ \right)_{(0,t)\times \Oe} + (\jeps f(\ueps), \mathrm{e}^{-\omega s} W_k^+ )_{(0,t)\times \Oe} 
    \\
    &+ \rho \left( \Jeps g(\ueps, \e^{-1} \Reps), \mathrm{e}^{-\omega s} W_k^+\right)_{(0,t)\times \geps}
    - \frac12 \int_0^t \int_{\Oe}  \partial_t \jeps W_k W_k^+ \,\mathrm{d}x \,\mathrm{d}s 
    \\
    &-  \left( \omega  \jeps W , W_k^+\right)_{(0,t)\times \Oe} - \left(k \partial_t \jeps ,W_k^+ \right)_{(0,t)\times \Oe} 
    \eqqcolon \sum_{l=1}^6 I_{\e}^6.
\end{aligned}
\end{align}
For the convective term $I_{\e}^1$, we have
\begin{align*}
    I_{\e}^1 &= \left( A_{\e}\weps W , \nabla W_k^+ \right)_{(0,t)\times \Oe } 
    \\
    &= \left(A_{\e}\weps  W_k^+ , \nabla W_k^+ \right)_{(0,t)\times \Oe } + k \left( A_{\e}\weps ,\nabla W_k^+ \right)_{(0,t)\times \Oe }
    \\
    &=: A_{\e}^1 + A_{\e}^2.
\end{align*}
For the first term, we use $\nabla \cdot (A_{\e} \weps) = \partial_t \jeps $ almost everywhere in $\oe$ and $A_{\e} \weps\cdot \nu =0$ almost everywhere on $\Ge$ as well as the zero boundary condition of $\ueps $ on $\partial \Omega $, which is then also valid for $W_k^+$, to obtain with integration by parts 
\begin{align*}
    A_{\e}^1 &= \frac12 \left(A_{\e}\weps  ,\nabla (W_k^+)^2 \right)_{(0,t)\times \Oe }
    = -\frac12 \left(\nabla \cdot \left( A_{\e}\weps \right) ,(W_k^+)^2 \right)_{(0,t)\times \Oe }
    \\
    &= \frac12 \left(\partial_t \jeps , (W_k^+)^2 \right)_{(0,t)\times \Oe } 
    = - I_{\e}^4.
\end{align*}
For the term $A_{\e}^2$, we obtain with similar arguments
\begin{align*}
    A_{\e }^2 = k(\partial_t \jeps , W_k^+)_{(0,t)\times \Oe } = - I_{\e}^6.
\end{align*}
Now, we note that $f(u) \leq C(1+ u) \leq C(1 + W_k^+ + k)$ for $u \geq 0$ and 
$W_k^+ = 0$ for $u \leq 0$ almost everywhere.
Thus, we obtain 
\begin{align}
  |f(u)W_k^+|  \leq  C(1 + W_k^+ + k) W_k^+.
\end{align}
Using additionally the essential boundedness of $\Jeps$ as well as the Hölder and Young inequalities, we obtain for $I_\e^2$ 
\begin{align*}
 \vert I_{\e}^2 \vert &\le \int\limits_{(0,t) \times \Oe} C(1 + W_k^+ + k) W_k^+ \,\mathrm{d}x \,\mathrm{d}s
\\
&\le
C(1+k)   \Vert W_k^+ \Vert_{L^1((0,t)\times \Oe)} 
+
 C \Vert W_k^+ \Vert^2_{L^2((0,t)\times \Oe)}
 \\
&\le C(1 + k^2) \int_0^t \int_{\{W(t) > k \} } \,\mathrm{d}x \,\mathrm{d}s
 +
 C \Vert W_k^+ \Vert^2_{L^2((0,t)\times \Oe)}.
\end{align*}
For the boundary term $I_{\e}^3$, we obtain with the essential boundedness of $\Jeps$ and $g$ as well as the scaled trace inequality for $\theta > 0 $  (with $\e \leq 1$)
\begin{align*}
    \vert I_{\e}^3\vert &\le \e C C_g \Vert \mathrm{e}^{-\omega s} W_k^+ \Vert_{L^1((0,t)\times \geps)} 
    \le C(\theta) \Vert W_k^+ \Vert_{L^1((0,t)\times \Oe)} + \e \theta \Vert \nabla W_k^+ \Vert_{L^1((0,t)\times \Oe)}
    \\
    &\le C(\theta) \int_0^t \int_{\{W(t)> k \} } \,\mathrm{d}x \,\mathrm{d}s + C(\theta) \Vert W_k^+ \Vert_{L^2((0,t)\times \Oe)}^2 + \theta \Vert \nabla W_k^+ \Vert_{L^2((0,t)\times \Oe)}^2.
\end{align*}
For $I_{\e}^5$, we get
\begin{align*}
    I_{\e}^5 &= - \omega (\jeps W_k^+,W_k^+)_{(0,t)\times \Oe } + k(\jeps , W_k^+)_{(0,t)\times \Oe }
    \\
    &\le -\omega J_\mathrm{min} \Vert W_k^+ \Vert_{L^2((0,t)\times \Oe)}^2 + C \left(k^2 \int_0^t \int_{\{W(t) > k\}} \,\mathrm{d}x \,\mathrm{d}s + \Vert W_k^+ \Vert_{L^2((0,t)\times \Oe)}^2 \right).
\end{align*}
Combining all the results, we obtain
\begin{align*}
    \frac12 &\left\Vert\sqrt{\jeps} W_k^+(t)\right\Vert_{L^2(\oe)}^2 + c_0 \Vert \nabla W_k^+\Vert_{L^2((0,t)\times \Oe)}^2
    \\
    &\le C(\theta) k^2 \int_0^t \int_{\{W(t) > k\} } \,\mathrm{d}x \,\mathrm{d}s + \left[C(\theta) - \omega J_\mathrm{min} \right] \Vert W_k^+ \Vert^2_{L^2((0,t)\times \Oe)} + \theta \Vert \nabla W_k^+ \Vert_{L^2((0,t)\times \Oe)}^2.
\end{align*}
For $\theta $ small enough, the term including the gradient on the right-hand side can be absorbed by the left-hand side. Hence, choosing $\omega > \frac{C(\theta)}{J_\mathrm{min}}$ in the inequality above (for fixed $\delta$), we obtain for almost every $t \in (0,T)$
\begin{align}
    \Vert W_k^+ \Vert_{L^2(\Oe)}^2 + \|\nabla W_k^+\|_{L^2((0,t)\times \Oe)}^2 \le C k^2 \int_0^t \int_{\{W(t) > k\} } \,\mathrm{d}x \,\mathrm{d}s.
\end{align}
Now, the result follows from \cite[II Theorem 6.1]{Ladyzenskaja} implying 
\begin{align*}
    \ueps \le C
\end{align*}
for a constant $C>0$ independent of $\e$. Repeating these arguments with $-\ueps$ instead of $\ueps$, we obtain 
\begin{align*}
    -\ueps \le C,
\end{align*}
which gives the essential boundedness of $\ueps$ uniformly with respect to $\e$.

It remains to show the non-negativity. For this, we choose $(\ueps)^- := \min(\ueps,0)$ as a test function in $\eqref{eq:weakForm:u_eps}$ and obtain (by a similar calculation as above for $W_k^+$)
\begin{align*}
    \frac12 \frac{\mathrm{d}}{\mathrm{d}t}\|(\ueps)^- \|^2_{L^2(\Oe)} + c_0 \|&\nabla (\ueps)_-\|^2_{L^2(\Oe)} 
    \\
    &\le \int_{\Oe} \jeps f(\ueps) (\ueps)^- \,\mathrm{d}x - \rho \int_{\geps } \jeps g\left(\ueps,\e^{-1}\reps\right) (\ueps)^- \,\mathrm{d} \sigma
    \\
    &\le C \|(\ueps)^-\|^2_{L^2(\Oe)}.
\end{align*}
Now, the Gronwall inequality implies the non-negativity of the microscopic concentration $\ueps$.
\end{proof}

\section{Derivation of the macroscopic model}\label{sec:Homogenisation}

In this section, we show that the microscopic solution $(\ueps,\weps,\peps,\reps)$ converges in a suitable sense to the solution of the macroscopic model $\eqref{MacroModel}$ formulated in Section \ref{SectionMainResults} for $\e \to 0$. In a first step, we establish compactness results obtained from the {a priori} estimates in Section \ref{SectionAprioriEstimates} and, in a second step, we pass to the limit $\e \to 0$ in the weak formulation of the microscopic problem in Definition \ref{def:DefinitionWeakSolutionMicroModel}.

\subsection{Compactness results for the microscopic solutions}
\label{SubsectionCompactness}
Based on the {a priori} estimates from Section \ref{SectionAprioriEstimates}, we derive compactness results for the microscopic solutions. As a suitable convergence concept for the treatment of problems including oscillating coefficients and domains, we use the two-scale convergence, see Appendix \ref{SubsectionTwoScaleConvergence}. Since we are dealing with functions on perforated domains, we have to extend these functions to the whole domain. While for the fluid velocity $\weps$ it is natural to use the zero extension, the situation is more complicated for the pressure $\peps$ and the concentration $\ueps$. For the pressure, we use an extension from Tartar given in \cite{SanchezPalencia1980}, see also \cite{lipton1990darcy}. For the concentration, we would loose spatial regularity for the zero extension and, therefore, we use an extension operator from \cite{CioranescuSJPaulin}, see also \cite{Acerbi1992} for more general domains. Namely, there exists an extension operator $E_{\e}: H^1(\oe) \rightarrow H^1(\Omega)$ such that
\begin{align*}
    \|E_{\e} \phieps\|_{L^2(\Omega)} &\le C \|\phieps\|_{L^2(\oe)},
    \\
    \|E_{\e}\phieps \|_{H^1(\Omega)} &\le C \|\phieps\|_{H^1(\oe)}
\end{align*}
for all $\phieps \in H^1(\oe)$ 
with a constant $C>0$ independent of $\e$. For time dependent functions, we apply this operator pointwise in $t$. We shortly write
\begin{align}\label{DefExtensionUeps}
    \tueps := E_{\e} \ueps \in L^2((0,T),H^1(\Omega))
\end{align}
and we obtain
\begin{align}\label{EstimateExtensionUeps}
    \|\tueps \|_{L^2((0,T),H^1(\Omega)) } + \|\tueps \|_{L^{\infty} ((0,T),L^2(\Omega))} \le C.
\end{align}
However, we emphasise that this extension operator in general does not preserve the regularity for the time derivative and, therefore, we loose control for the time variable of the microscopic sequence $\tueps$, which is important to obtain strong two-scale compactness results.
The strong two-scale convergence of $\ueps$ was shown for the same a priori estimates in \cite[Proposition 5]{GahnPop2023Mineral} and with additional estimates for the differences of shifts with respect to time in \cite{WIEDEMANN2023113168}. Here, we simplify the proof of \cite{GahnPop2023Mineral} by showing the following general strong two-scale compactness result. The Proposition will then be applied to $\jeps$ and $\ueps$, so we keep the same notation, but we slightly generalise the assumptions for the sake of the proposition.
\begin{proposition}\label{prop:Compactness_dt}
Let $n\geq 2$ and $ V_{\e} \subset H^1(\oe)$ dense in $L^2(\oe)$. 
Let $\ueps \in L^2((0,T),V_{\e})$ and $\jeps \in L^{\infty}((0,T)\times \oe)$ with $\partial_t (\jeps \ueps )\in L^2((0,T),V_{\e}')$ with respect to the usual Gelfand triple $V_{\e}\subset L^2(\oe) \subset V_{\e}'$. Further, let $\jeps \geq c_0 >0 $ for a constant $c_0>0$ independent of $\e$ and 
\begin{align*}
\Vert \partial_t (\jeps \ueps) \Vert_{L^2((0,T),V_{\e}')} + \Vert \ueps \Vert_{L^2((0,T),H^1(\oe))} &\le C.
\end{align*}
Moreover, assume there exists $\kappa(h)$ with $\kappa(h) \to 0$ for $h\to 0$ and $h>0$ such that 
\begin{align}\label{ControlShiftsJeps}
\Vert \jeps(\cdot + h) - \jeps \Vert_{L^{\infty}((0,T-h),L^s(\oe))} \le \kappa(h)
\end{align}
with $s>1$ arbitrary for $n=2$ and $s = \frac{n}{2}$ for $n\geq 3$.
Then, there exists $\kappa_0(h)$ with $\kappa_0(h) \rightarrow 0$ for $h\to 0$ such that
\begin{align*}
\Vert \ueps (\cdot + h ) - \ueps \Vert_{L^2((0,T-h),L^2(\oe))} \le \kappa_0(h).
\end{align*}
In particular, there exists $u_0 \in L^2((0,T)\times \Omega)$ (even in $L^2((0,T),H^1(\Omega))$) such that, up to a subsequence, $\chi_{\oe}\ueps $ converges strongly in the two-scale sense to $\chi_{Y^{\ast}}u_0$.
\end{proposition}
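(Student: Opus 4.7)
The plan is to derive an $L^2$-time-shift estimate for $\ueps$ from the $V_\e'$-bound on $\partial_t(\jeps\ueps)$ combined with the shift control \eqref{ControlShiftsJeps} on $\jeps$, and then feed this into the Kolmogorov--Simon compactness theorem via the spatial extension $E_\e$. The starting point is the algebraic decomposition
\begin{align*}
\jeps(t+h)\ueps(t+h)-\jeps(t)\ueps(t) = \jeps(t+h)\bigl[\ueps(t+h)-\ueps(t)\bigr]+\bigl[\jeps(t+h)-\jeps(t)\bigr]\ueps(t),
\end{align*}
which, after testing with $\ueps(t+h)-\ueps(t)\in V_\e$, yields
\begin{align*}
\int_{\oe}\jeps(t+h)[\ueps(t+h)-\ueps(t)]^2\,dx
&= \bigl\langle (\jeps\ueps)(t+h)-(\jeps\ueps)(t),\,\ueps(t+h)-\ueps(t)\bigr\rangle_{V_\e',V_\e}\\
&\quad -\int_{\oe}\bigl[\jeps(t+h)-\jeps(t)\bigr]\,\ueps(t)\,\bigl[\ueps(t+h)-\ueps(t)\bigr]\,dx.
\end{align*}

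\textbf{Estimation of the two terms.} For the duality term, writing $(\jeps\ueps)(t+h)-(\jeps\ueps)(t)=\int_t^{t+h}\partial_s(\jeps\ueps)\,ds$ and combining Cauchy--Schwarz with Fubini gives the bound $\|(\jeps\ueps)(\cdot+h)-(\jeps\ueps)(\cdot)\|_{L^2((0,T-h),V_\e')}\leq h\,\|\partial_t(\jeps\ueps)\|_{L^2((0,T),V_\e')}$. Pairing with the $L^2(V_\e)$-bound on $\ueps$ and integrating in $t$ therefore yields a bound of the form $Ch$ for the duality contribution. For the correction integral, I apply a triple H\"older inequality with exponents $1/s+1/p_1+1/p_2=1$, choosing $p_1=p_2=2^{\ast}=2n/(n-2)$ when $n\geq 3$ (or $p_1,p_2$ arbitrarily large when $n=2$), which is exactly matched by the hypothesis $s=n/2$ (resp.\ $s>1$). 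The uniform Sobolev embedding $H^1(\Oe)\hookrightarrow L^{p_i}(\Oe)$, available through the extension $E_\e$, controls $\|\ueps(t)\|_{L^{p_1}(\oe)}$ and $\|\ueps(t+h)-\ueps(t)\|_{L^{p_2}(\oe)}$ by $H^1$-norms, so by Cauchy--Schwarz in time the correction term is bounded by $\kappa(h)\cdot C\|\ueps\|^2_{L^2((0,T),H^1(\oe))}\leq C\kappa(h)$. Using $\jeps\geq c_0$ on the left, this gives the desired bound
\begin{align*}
\|\ueps(\cdot+h)-\ueps(\cdot)\|_{L^2((0,T-h),L^2(\oe))}^2\leq \frac{C}{c_0}\bigl(h+\kappa(h)\bigr)=:\kappa_0(h)\to 0.
\end{align*}

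\textbf{From shifts to strong two-scale convergence.} The extension $\tueps=E_\e \ueps$ is uniformly bounded in $L^2((0,T),H^1(\Omega))$ and its time translates are controlled in $L^2((0,T-h),L^2(\Omega))$ by the same rate $C\sqrt{\kappa_0(h)}$, because $E_\e$ is uniformly bounded from $L^2(\oe)$ to $L^2(\Omega)$. Since $H^1(\Omega)\hookrightarrow\hookrightarrow L^2(\Omega)$, Simon's compactness theorem delivers a subsequence $\tueps\to u_0$ strongly in $L^2((0,T)\times\Omega)$, with $u_0\in L^2((0,T),H^1(\Omega))$ by weak lower semicontinuity. Strong $L^2$-convergence of $\tueps$ implies strong two-scale convergence of $\tueps$ to $u_0(t,x)$ (independent of $y$), while $\chi_{\oe}$ strongly two-scale converges to $\chi_{Y^{\ast}}(y)$. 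Identifying $\chi_{\oe}\ueps=\chi_{\oe}\tueps$ and verifying the norm convergence $\|\chi_{\oe}\ueps\|_{L^2}^2=\int_\Omega\chi_{\oe}\tueps^2\,dx\to |Y^{\ast}|\int_\Omega u_0^2\,dx=\|\chi_{Y^{\ast}}u_0\|_{L^2(\Omega\times Y)}^2$ (via the weak-$\ast$ convergence of $\chi_{\oe}$ against the $L^1$-convergent sequence $\tueps^2$) upgrades the weak two-scale limit $\chi_{Y^{\ast}}u_0$ to a strong one.

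\textbf{Main obstacle.} The delicate step is the matching of H\"older exponents in the correction term: the very weak spatial integrability of the $\jeps$-shifts (only $L^{n/2}$ for $n\geq 3$) forces the use of the critical Sobolev embedding into $L^{2^{\ast}}$ for both factors built from $\ueps$, and it is exactly at this balance that the stated threshold $s=n/2$ (resp.\ $s>1$ if $n=2$) becomes sharp. Everything else -- the duality bound and the deduction of strong two-scale convergence -- is comparatively routine once this quantitative estimate is in place.
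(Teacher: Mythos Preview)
Your proposal is correct and follows essentially the same route as the paper: the same algebraic decomposition of $\jeps(t+h)(\delta_h\ueps)^2$ into a duality term and a correction term, the same estimation of the duality term via $\|\delta_h(\jeps\ueps)\|_{L^2(V_\e')}\le h\|\partial_t(\jeps\ueps)\|_{L^2(V_\e')}$, the same triple H\"older argument with the critical Sobolev exponent $2^\ast$ for the correction term, and the same passage to strong two-scale convergence via the extension $E_\e$ and the Kolmogorov--Simon theorem. Your final paragraph on upgrading weak to strong two-scale convergence is a bit more explicit than the paper's, which simply records the strong $L^2$-convergence of $\tueps$ and concludes.
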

\begin{proof}
For a function $\phi: (0,T) \times U \rightarrow \R$ with $U\subset \R^n$, we define the shift with respect to time  for $0 < h \ll 1$ and $t \in (0,T-h)$ by
\begin{align*}
\delta_h \phi (t):= \delta_h \phi(t,\cdot_x) := \phi(t+h) - \phi(t).
\end{align*}
Since $\jeps \geq c_0 > 0 $ almost everywhere in $(0,T)\times \oe$, it holds for almost every $t \in (0,T-h)$ that
\begin{align*}
c_0 \Vert \delta_h \ueps(t)\Vert_{L^2(\oe)}^2 \le \int_{\oe } \jeps(t+h) (\delta_h \ueps (t))^2 \,\mathrm{d}x.
\end{align*}
Using the identity 
\begin{align*}
\jeps(t+h) (\delta_h \ueps(t))^2 = \delta_h(\jeps \ueps)(t) \delta_h \ueps(t) - \delta_h \jeps(t) \ueps(t) \delta_h \ueps(t),
\end{align*}
we obtain
\begin{align*}
c_0\Vert \delta_h \ueps \Vert_{L^2((0,T-h)\times \oe)}^2 &\le  \int_0^{T-h}\int_{\oe} \delta_h (\jeps\ueps) (t) \delta_h \ueps(t) \,\mathrm{d}x \,\mathrm{d}t\\ &\quad  - \int_0^{T-h} \int_{\oe} \delta_h \jeps(t) \ueps(t) \delta_h \ueps(t) \,\mathrm{d}x\,\mathrm{d}t
\\
&=: A_{\e,h}^1 + A_{\e,h}^2.
\end{align*}
Let us estimate the two terms separately. We start with the second term and restrict to the case $n\geq 3$ (the case $n=2$ follows by similar arguments). Using the Sobolev embedding $H^1(\oe)\hookrightarrow L^{2^{\ast}}(\oe)$ with $2^{\ast} : = {\frac{2n}{n-2}}$, we obtain $\ueps \in L^2((0,T), L^{2^{\ast}}(\oe))$  such that
\begin{align*}
    \left\| \ueps \right\|_{L^2((0,T),L^{p^{\ast}}(\oe))} \le C \|\ueps\|_{L^2((0,T),H^1(\oe))} \le C
\end{align*}
with a constant $C>0$ independent of $\e$. More precisely, this constant is independent of $\e$ owing to the existence of the extension operator $E_{\e}$ defined at the beginning of Section \ref{SubsectionCompactness}.
Now, the H\"older inequality implies 
\begin{align*}
\vert A_{\e,h}^2\vert &\le \int_0^{T-h} \Vert \delta_h \jeps(t)\Vert_{L^{\frac{n}{2}}(\oe)} \Vert \ueps(t) \Vert_{L^{2^{\ast}}(\oe)} \left( \Vert \ueps(t+h) \Vert_{L^{2^{\ast}}(\oe)} + \Vert \ueps(t)\Vert_{L^{2^{\ast}}(\oe)} \right) \,\mathrm{d}t
\\
&\le 2 \Vert \ueps \Vert^2_{L^2(0,T),L^{2^{\ast}}(\oe))} \Vert \delta_h \jeps \Vert_{L^{\infty}((0,T),L^{\frac{n}{2}}(\oe))}
\\
&\le C \kappa(h).
\end{align*}
For the first term $A_{\e,h}^1$, we have (using the natural embedding in the Gelfand triple $V_{\e}\hookrightarrow L^2(\oe) \hookrightarrow V_{\e}'$) 
\begin{align*}
A_{\e,h}^1 &=\int_0^{T-h} \langle \delta_h (\jeps \ueps)(t) , \delta_h \ueps(t) \rangle_{V_{\e}',V_{\e}} \,\mathrm{d}t
\\
&\le \underbrace{\Vert \delta_h (\jeps \ueps) \Vert_{L^2((0,T-h),V_{\e}')} }_{ \le h \Vert \partial_t (\jeps \ueps)\Vert_{L^2((0,T),V_{\e}')} } \Vert \delta_h \ueps(t) \Vert_{L^2((0,T),H^1(\oe))}
\\
&\le C h.
\end{align*}
Altogether, we obtain 
\begin{align*}
\Vert \delta_h \ueps\Vert_{L^2((0,T-h)\times \oe)}\le C \sqrt{h + \kappa(h)} =: \kappa_0(h)
\end{align*}
and, obviously, $\kappa_0(h) \rightarrow 0$ for $h\to 0$.

It remains to show the strong two-scale convergence of $\ueps$. The proof follows similar ideas as in \cite[Lemma 10]{Gahn}. 
Using the extension $\tueps$ of $\ueps$ defined in $\eqref{DefExtensionUeps}$, we obtain with $\eqref{EstimateExtensionUeps}$
\begin{align*}
\left\Vert \int_{t_1}^{t_2} \tueps \,\mathrm{d}t\right\Vert_{H^1(\Omega)} \le C \Vert \tueps \Vert_{L^2((0,T),H^1(\Omega))} \le C .
\end{align*}
Then, noting the compact embedding $H^1(\Omega) \hookrightarrow L^2(\Omega)$, we have that $\int_{t_1}^{t_2} \tueps \,\mathrm{d}t $ is relatively compact in $L^2(\Omega)$. Further, it holds that (the extension operator is pointwise defined in time)
\begin{align*}
\Vert \delta_h \tueps \Vert_{L^2((0,T-h),L^2(\Omega))} \le C \Vert \delta_h \ueps \Vert_{L^2((0,T-h),L^2(\oe))} \le C \kappa_0(h) \overset{h\to 0}{\longrightarrow} 0.
\end{align*}
Therefore, by the Kolmogorov--Simon compactness theorem, see \cite[Theorem 1]{Simon}, $\tueps$ converges strongly in $L^2((0,T)\times \Omega)$. This gives the desired result.
\end{proof}

\begin{remark}\ 
\begin{enumerate}[label = (\roman*)]
\item Proposition \ref{prop:Compactness_dt} remains valid for arbitrary perforated domains $\oe$ as long there exists a linear extension operator $E_{\e}^{\ast}: H^1(\oe)\rightarrow H^1(\Omega)$ such that 
\begin{align*}
    \|E_{\e}^{\ast}\ueps \|_{H^1(\Omega)} \le C \|\ueps \|_{H^1(\oe)} \quad\mbox{ for all } \ueps \in H^1(\oe).
\end{align*}
In particular, the result is valid for $\oe = \Omega$ and gives (besides the strong two-scale convergence) the strong convergence in $L^2((0,T)\times \Omega)$ in this case.

\item Under the additional assumption $\ueps \in L^{\infty}((0,T),L^2(\oe))$ with 
\begin{align*}
    \|\ueps\|_{L^{\infty}((0,T),L^2(\oe))} \le C
\end{align*}
for a constant $C>0$ independent of $\e$, we can replace $\eqref{ControlShiftsJeps}$ with 
\begin{align*}
    \|\jeps(\cdot + h)  - \jeps\|_{L^2((0,T-h),L^{\infty}(\oe))} \le \kappa (h)
\end{align*}
and obtain the same result (using slightly different arguments for the term $A_{\e,h}^2$ in the proof).
\end{enumerate}
\end{remark}
In the following proposition, we give the compactness results for the microscopic solutions. The results for $(\ueps,\reps)$ have already been established in \cite{GahnPop2023Mineral} for the same \textit{a priori} estimates as in Lemma \ref{LemmaAprioriEstimates} and the Lipschitz estimate for $\reps$ from Lemma \ref{lem:LipschitzContnuityF1} (with slightly more regularity assumptions on $\reps(0)$). Similar compactness results were also obtained in \cite{WIEDEMANN2023113168}, where an additional estimate for the differences of the shifts of $\ueps$ with respect to time was shown for the strong convergence of $\ueps$, and the strong convergence of $\reps$ was shown by a direct comparison with the macroscopic radius $R_0$ using the microscopic and macroscopic ODE.

For $\ueps $, we use the extension from $\eqref{DefExtensionUeps}$. Further, since $w_\e \in L^\infty((0,T);H^1_\Ge(\Oe))$, we can extend it by zero to a function $w_\e \in L^\infty((0,T);H^1(\Omega))$ in order to state the convergence.
Moreover, in order to derive not only a weak but also strong convergence result for the pressure, we extend it as follows (see \cite{lipton1990darcy}):
\begin{align}
Q_\e (t,x) \coloneqq \begin{cases}
    q_\e(t,x) &\textrm{ if } x \in \Oe,
    \\
    \frac{1}{|Y^*|} \int\limits_{k + \e Y^*} q_\e(t, z) \,\mathrm{d}z &\textrm{ if } x \in k + \e (Y\setminus Y^*) \textrm{ for } k\in K_\e.
\end{cases}
\end{align}
Note that this extension coincides with the extension from Tartar in \cite{SanchezPalencia1980}.

\begin{proposition}\label{prop:Comapctness}
 The microscopic solutions $(\ueps,\weps, \peps,\reps)$ fulfill the following convergence results up to a subsequence:
\begin{enumerate}[label = (\roman*)]
\item\label{CompactnessUeps} There exist $u_0 \in L^2((0,T),H^1_0(\Omega))\cap  L^{\infty}((0,T),L^2(\Omega))$ and $u_1 \in L^2((0,T)\times \Omega ,H_{\per}^1(Y^{\ast})/\R)$ such that 
\begin{align*}
    \tueps &\rightarrow u_0 \quad \mbox{ in } L^2((0,T)\times \Omega),
    \\
    \chi_{\oe}\nabla \ueps &\rightwts{2} \chi_{Y^{\ast}}\left(\nabla u_0 + \nabla_y u_1\right) ,
    \\
    \chi_{\oe} \ueps &\rightsts{2}  \chi_{Y^{\ast}}u_0,
    \\
    \ueps|_{\geps} &\rightsts{2}  u_0 \quad \mbox{ on } \geps .
\end{align*}

\item\label{CompactnessReps} There exists $R_0 \in H^1((0,T),L^2(\Omega)) \cap L^{\infty}((0,T)\times \Omega)$ with $\partial_t R_0 \in L^{\infty}((0,T)\times \Omega)$ such that for all $p\in [1,\infty)$
\begin{align}
\label{ConvergenceReps}
    \e^{-1} \reps &\rightarrow R_0 &\mbox{ in }& L^p((0,T)\times \Omega),
    \\
\label{ConvergenceDtReps}
    \e^{-1} \partial_t \reps &\rightarrow \partial_t R_0 &\mbox{ in }& L^p((0,T)\times \Omega).
\end{align}
These convergences are also valid in the strong two-scale sense in $\Omega$ and $\geps$. Further, it holds almost everywhere in $(0,T)\times \Omega$ that
\begin{align*}
    \partial_t R_0 = g(u_0,R_0). 
\end{align*}
\item\label{CompactnessWepsPeps} There exists $w_0 \in L^\infty((0,T);L^2(\Omega;H^1_\#(Y)))$ with 
\begin{align}\label{eq:Compactness:w_0:P1}
    w_0&= 0 &\mbox{ in } & (0,T)\times \Omega \times (Y\setminus Y^*),
    \\\label{eq:Compactness:div_yw_0}
    \div_y(A_0 w_0 ) &= 0  &\mbox{ in } & (0,T)\times \Omega \times Y,
    \\\label{eq:Compactness:div_xw_0}
    \div_x \left( \int\limits_{Y^{\ast}} A_0 w_0  \right) &= - \partial_t \theta &\mbox{ in } & (0,T)\times \Omega \times Y,
\end{align}
such that for the zero extension of $\weps$ to $\Omega$ it holds  for every $p \in [1,\infty)$ that
\begin{align}\label{eq:Compactness:w_0}
    \weps  &\rightwts{p,2} w_0 ,
    \\\label{eq:Compactness:nablaw_0}
    \e \nabla \weps &\rightwts{p,2} \nabla_y w_0 ,
    \\\label{eq:Compactness:nablaSw_0}
    \e e_{\e}(\weps) &\rightwts{p,2} e_0(w_0),
\end{align}
where $e_0(w_0) = \tfrac{1}{2}(F_0^{-\top}\nabla_y w_0 + \nabla_y w_0^\top F_0^{-1})$  and $F_0$ is defined in \eqref{eq:def:F0} below.

Moreover, there exists $Q_0 \in L^\infty((0,T);H^1_0(\Omega))$ such that for every $p \in [1, \infty)$
\begin{align}\label{eq:Compactness:Q_e}
   Q_\e &\rightarrow Q_0 &\mbox{ in } & L^p((0,T);L^2(\Omega)),
   \\\label{eq:Compactness:P_e}
   Q_\e + p_\mathrm{b}=:P_\e &\rightarrow P_0:= Q_0 + p_\mathrm{b} &\mbox{in }& L^p((0,T),L^2(\Omega)).
\end{align}
\end{enumerate}
\end{proposition}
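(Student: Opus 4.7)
The plan is to treat the three groups of unknowns in the order radius--concentration--Stokes, because the strong convergence of $R_\e$ (part \ref{CompactnessReps}) drives the two-scale limits of the transformation coefficients $J_\e,A_\e,D_\e,F_\e$, which in turn are needed to pass to the limit in the Stokes system and in the transport equation. Throughout, the $\e$-uniform a~priori bounds of Lemma~\ref{LemmaAprioriEstimates} together with the $L^\infty$-bound of Lemma~\ref{LemmaLinftyBoundUeps} supply the weak and weak-two-scale limit points, and the Lipschitz estimates of Lemma~\ref{lem:EstimateShiftsReps} translate strong convergence of $\e^{-1}R_\e$ into strong two-scale convergence of $J_\e,A_\e,F_\e,D_\e$.

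For \ref{CompactnessReps}, I would extract $R_0\in L^\infty$ with $\partial_t R_0 \in L^\infty$ from \eqref{AprioriEstimatesReps} by weak-$\ast$ compactness, and establish strong convergence in $L^p((0,T)\times\Omega)$ by the Kolmogorov--Fr\'echet theorem: shifts in $t$ are controlled directly by $\|\partial_t R_\e\|_{L^\infty} \leq \e C$, while shifts in $x$ are controlled via \eqref{Lip_Estimate_reps} together with the $H^1$-bound of $\ueps$ and a suitable regularisation of $R_0^{\mathrm{in}}$; the piecewise-constant (on $\e$-cells) structure of $\reps$ gives the $\e$-scaling automatically. Once $\e^{-1}R_\e\to R_0$ strongly, the corresponding strong two-scale convergences on $\Omega$ and on $\Gamma_\e$ follow from the piecewise-constant structure, and passing to the limit in the ODE \eqref{eq:weakForm:r_eps} identifies $\partial_t R_0 = g(u_0,R_0)$ once the strong trace convergence of $\ueps$ on $\geps$ from \ref{CompactnessUeps} is available.

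For \ref{CompactnessUeps}, the weak two-scale limits of $\chi_{\Oe}\ueps$ and $\chi_{\Oe}\nabla\ueps$ are standard consequences of \eqref{AprioriEstimatesUeps} and the extension $\tueps$; the decomposition into $\nabla u_0 + \nabla_y u_1$ follows from the usual two-scale structure on perforated domains. The key step is the strong two-scale convergence, and here I would invoke Proposition~\ref{prop:Compactness_dt} with $V_\e = H^1_{\partial\Omega}(\Oe)$: the hypothesis $\partial_t(J_\e\ueps)\in L^2((0,T),V_\e')$ with a uniform bound is exactly \eqref{AprioriEstimatesJepsUeps}, the lower bound $J_\e\geq c_J$ is \eqref{eq:J>c_J}, and the time-shift estimate \eqref{ControlShiftsJeps} is provided by $\|\partial_t J_\e\|_{L^\infty}\leq C$ from \eqref{eq:dt_psi<C} (giving $\kappa(h)=Ch$). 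The trace convergence on $\geps$ then follows from the strong two-scale convergence in $\Oe$ combined with the uniform $L^2$-bound of $\nabla\ueps$ via a standard trace estimate on the perforations.

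For \ref{CompactnessWepsPeps}, the weak two-scale limits \eqref{eq:Compactness:w_0}--\eqref{eq:Compactness:nablaSw_0} follow directly from \eqref{AprioriEstimatesWeps} and standard two-scale compactness; the relation \eqref{eq:Compactness:div_yw_0} comes from two-scale passage in $\nabla\cdot(A_\e w_\e)=-\partial_t J_\e$ by testing with $\e\phi(t,x,x/\e)$, while \eqref{eq:Compactness:div_xw_0} is obtained by testing with $\phi(t,x)$ and using $\partial_t J_\e \rts \partial_t\theta$. The confinement \eqref{eq:Compactness:w_0:P1} reflects the no-slip condition $\weps=0$ on $\Ge$ together with the zero extension. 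For the pressure, the Tartar extension $Q_\e$ is $\e$-uniformly bounded in $L^\infty((0,T);L^2(\Omega))$ via the inf--sup condition \eqref{eq:Inf-Sup} applied to the Stokes system; upgrading its weak limit to strong convergence in $L^p((0,T);L^2(\Omega))$ is the main technical obstacle and is the step I expect to be hardest. I would follow the strategy of Allaire/Lipton: use the Stokes equation to deduce compactness in time for $Q_\e$ (via a duality argument against divergence-free test fields), and then combine with the spatial $H^1$-regularity of $Q_0$ inherited from the two-pressure decomposition to conclude strong convergence via Aubin--Lions. The boundary values $p_\mathrm{b}$ transfer trivially to give \eqref{eq:Compactness:P_e}.
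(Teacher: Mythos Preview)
Your plan for parts \ref{CompactnessUeps} and \ref{CompactnessReps} is essentially the paper's: Kolmogorov for $\e^{-1}R_\e$ via the time-Lipschitz bound and the spatial-shift estimate \eqref{Lip_Estimate_reps}, then Proposition~\ref{prop:Compactness_dt} for the strong two-scale convergence of $\ueps$ (the paper verifies \eqref{ControlShiftsJeps} through Lemma~\ref{lem:EstimateShiftsReps} applied to time-shifted $R_\e$, but your direct use of $\|\partial_t J_\e\|_{L^\infty}\le C$ is equivalent). One point you leave implicit is the \emph{strong} $L^p$ convergence of $\e^{-1}\partial_t R_\e$: the paper obtains this by rewriting the ODE via the unfolding operator as $\e^{-1}\partial_t R_\e = |\Gamma|^{-1}\int_\Gamma g(\teps\ueps,\teps(\e^{-1}R_\e))\,\mathrm d\sigma$ and applying dominated convergence once the strong two-scale trace convergence of $\ueps$ on $\geps$ is available.

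Your description of the pressure step in \ref{CompactnessWepsPeps} is where the sketch goes wrong. The strong convergence $Q_\e\to Q_0$ in $L^p((0,T);L^2(\Omega))$ is \emph{not} an Aubin--Lions argument, and there is no time compactness to exploit: the Stokes system is quasi-stationary and time enters only as a parameter. The mechanism in \cite{All89Stokes} (adapted in \cite[Lemma~4.8]{wiedemann2021homogenisation}, which the paper simply cites) is purely spatial and runs pointwise in $t$: one uses the restriction operator underlying the Tartar extension to relate $(Q_\e,\nabla\cdot v)_\Omega$ for $v\in H^1_0(\Omega)^n$ to integrals over $\Oe$, tests the momentum equation, and shows $\nabla Q_\e\to\nabla Q_0$ strongly in $H^{-1}(\Omega)^n$ because the viscous contribution $\e^2 e_\e(\weps)$ vanishes in the limit. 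The Ne\v{c}as inequality then upgrades this to $Q_\e\to Q_0$ strongly in $L^2(\Omega)$, and the $L^p(0,T)$ convergence follows by dominated convergence from the uniform $L^\infty(0,T;L^2)$ bound. Your proposed route via time compactness and the $H^1$ regularity of the limit $Q_0$ would not deliver the required spatial strong convergence.
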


\begin{proof}
We start with \ref{CompactnessUeps}. The existence of $u_0$ and $u_1$ and the  weak two-scale convergence results follow from the {a priori} estimates in Lemma \ref{LemmaAprioriEstimates} and the two-scale compactness results from Lemma \ref{BasicTwoScaleCompactness} in the appendix. Using again the {a priori} estimates from Lemma \ref{LemmaAprioriEstimates}, the strong convergence of $\tueps$ follows from Proposition \ref{prop:Compactness_dt} if we show $\eqref{ControlShiftsJeps}$. However, this is a direct consequence of Lemma \ref{lem:EstimateShiftsReps} (with $\reps^1 = \reps$ and $\reps^2:= \reps(\cdot_t + h,\cdot_x)$) and the Lipschitz continuity of $\reps$.

Next, we consider the convergence results in \ref{CompactnessReps} for $\reps$. The first result $\eqref{ConvergenceReps}$ was shown in \cite[Proposition~4]{GahnPop2023Mineral}, and in the strong two-scale sense 
on the surface $\geps$ in \cite[Corollary 7]{GahnPop2023Mineral} but with slightly more regularity assumptions on the initial values $\reps^\mathrm{in}$ and $R_0^\mathrm{in}$. In particular, it was assumed that $R_0^\mathrm{in} \in H^1(\Omega)$. Here, we dropped this assumption and only assume (as in \cite{WIEDEMANN2023113168}) the strong convergence of $\e^{-1} \reps^\mathrm{in}$ in $L^2(\Omega)$. However, since the proof still follows the same lines as in \cite[Proposition~4]{GahnPop2023Mineral}, we only sketch the main ideas. We use the standard Kolmogorov compactness result, see for example \cite{Brezis}, and since $\reps$ is bounded in $L^2(\Omega)$ it is enough to control the shifts in time and space. In what follows, we extend $\reps$ and $\ueps$ by zero outside $\Omega$, which is possible due to the zero boundary condition of $\ueps$ on $\partial \Omega$. (It is also possible to consider shifts within $\Omega$ but then the argumentation becomes more technical.) The essential bound of $\partial_t \reps$ implies for all $h>0$ and $t \in (0,T-h)$ 
\begin{align*}
   \e^{-1} \|\reps(\cdot_t + h , \cdot_x) -\reps \|_{L^{\infty}((0,T-h)\times \Omega)} \le C h.
\end{align*}
For the spatial variable, it is enough to consider shifts with $\e l$ for $l\in \Z^n$ (see  also \cite[(42)]{GahnPop2023Mineral}). From the Lipschitz estimate $\eqref{Lip_Estimate_reps}$ in Lemma \ref{lem:LipschitzContnuityF1}, we obtain, using the trace inequality (see Lemma \ref{Lem:TraceInequality} in the appendix), the strong convergence of $\e^{-1} \reps^\mathrm{in}$ in $L^2(\Omega)$ (which guarantees a bound for the shifts), the mean value theorem ($\ueps$ is extended via $\tueps$ to the whole domain $\Omega$) and the {a priori} bound for $\nabla \ueps$ already obtained above,
\begin{align*}
    \e^{-1}\|\reps(\cdot_t , \cdot_x + \e l) - \reps\|_{L^{\infty}((0,T),L^2(\Omega))} \le C\left( \e + \kappa(|\e l|)\right)
\end{align*}
with $\kappa(|\e l |) \rightarrow 0$ for $|\e l| \to 0$. Hence, the Kolmogorov compactness result implies the desired result.

The convergence of the time derivative $\partial_t \reps$ was shown in \cite[Proposition 6]{GahnPop2023Mineral} but we give a much simpler argument for the compactness here. (In \cite[\textcolor{blue}{...}]{WIEDEMANN2023113168} the macroscopic equation was used.) By a change of coordinates, we obtain from the ODE $\eqref{eq:weakForm:r_eps}$ for $\reps$ 
\begin{align*}
   \e^{-1} \partial_t \reps(t,x) = \frac{1}{|\Gamma|} \int_{\Gamma} g\left(\teps(\ueps)(t,x,z) , \teps (\e^{-1} \reps)(t,x,z)\right) \,\mathrm{d}\sigma_z
\end{align*}
for almost every $(t,x) \in (0,T)\times \Omega$.
Using the strong two-scale convergence of $\ueps$ and $\e^{-1} \reps$ on $\geps$, we obtain
\begin{align*}
    \teps(\e^{-1}\reps) &\rightarrow R_0 &\mbox{ in }& L^p((0,T)\times \Omega \times \Gamma),
    \\
    \teps(\ueps) &\rightarrow u_0 &\mbox{ in }& L^2((0,T)\times \Omega \times \Gamma).
\end{align*}
Hence, up to a subsequence, these convergence also hold pointwise almost everywhere in $(0,T)\times \Omega \times \Gamma$. Then, for almost every $(t,x)\in (0,T)\times \Omega \times \Gamma$, we get
\begin{align}
    \e^{-1} \partial_t \reps (t,x) \rightarrow \frac{1}{|\Gamma|} \int_{\Gamma} g(u_0(t,x),R_0(t,x)) \,\mathrm{d}\sigma_z = g(u_0(t,x),R_0(t,x)).
\end{align}
The dominated convergence theorem of Lebesgue now implies the convergence in $L^p((0,T)\times \Omega)$.  It remains to show the convergence of the traces of $\partial_t \reps$.  Since $\partial_t \reps$ is constant on every microscopic cell, we also have that $\teps (\partial_t \reps)(t,x,\cdot_y) $ is constant for almost every $(t,x)$. Hence, using the trace inequality, we obtain (with $\nabla_y( \teps(\e^{-1}\partial_t \reps ) - g(u_0,R_0) ) = 0$)
\begin{align*}
    \|\teps(\e^{-1} \partial_t \reps ) - g(u_0,R_0)\|_{L^p((0,T)\times \Omega \times \Gamma)} \le C \|\teps(\e^{-1} \partial_t \reps) - g(u_0,R_0)\|_{L^p((0,T)\times \Omega \times Y^{\ast})} \rightarrow 0.
\end{align*}

Last, we consider the convergence results in \ref{CompactnessWepsPeps}.
The compactness results \eqref{eq:Compactness:w_0} and \eqref{eq:Compactness:nablaw_0} with the properties \eqref{eq:Compactness:w_0:P1}, \eqref{eq:Compactness:div_yw_0}, \eqref{eq:Compactness:div_xw_0} for the limit function $w_0$ are shown in \cite[Lemma 4.9]{wiedemann2021homogenisation} and \cite[Corollary 5.2]{wiedemann2021homogenisation}.
Moreover, \eqref{eq:Compactness:nablaSw_0} can be concluded from \eqref{eq:Compactness:nablaw_0} and the strong two-scale convergence of $F_\e^{-1}$, which is given by Lemma \ref{lem:LemConvData} below, by a standard argument.
The strong convergence of $Q_\e$ was shown in \cite[Lemma 4.8]{wiedemann2021homogenisation} similarly as in \cite{All89Stokes}  (where different boundary conditions were considered).

We note that the compactness results of \cite{wiedemann2021homogenisation} are formulated pointwise with respect to time but can easily be adapted to the time-dependent two-scale convergence.
\end{proof}

As an immediate consequence of the strong convergence results for $\reps$ and $\partial_t \reps$, we obtain strong two-scale compactness for the coefficients in the transformed equations.
Therefore, we define the limit transformation $\psi_0$ and its Jacobians with respect to the microscopic variable by (see also $\eqref{eq:def:psi}$ for the definition of $\psi$)
\begin{align*}
\check{\psi}_0(t,x,y) &\coloneqq \check{\psi}(R_0(t,x), y) =  (R_0(t,x)-\rmax) \tfrac{y-m}{\|y-m\|} \chi(\| y-\m \|),
\\
\psi_0(t,x,y) &\coloneqq  y + \check{\psi}_0(t,x,y) = y + (R_0(t,x)-\rmax) \tfrac{y-m}{\|y-m\|} \chi(\| y-\m \|).
\end{align*} 
Thus, $\psi_0(t,x,\cdot)$ is a diffeomorphism from $Y^*$ onto $Y^*(t,x)$ for a.e.~$(t,x) \in (0,T) \times \Omega$ with inverse $\psi_0^{-1}(t,x, y ) =\psi^{-1}(R_0(t,x), y)$ for $y \in Y^*(t,x)$.
Moreover, we define analogously to the $\e$-scaled setting
\begin{align}
\label{eq:def:F0}
F_0 \coloneqq \nabla_y \psi_0^{\top}, \qquad
J_0 \coloneqq \det(F_0), \qquad
A_0 \coloneqq J_0  F_0^{-1}, \qquad
D_0 \coloneqq J_0 F_0^{-1} D F_0^{-\top}.
\end{align}
In the following lemma, we show the   strong two-scale convergence for the derivatives of $\psieps$. Since the coefficients $F_\e, A_\e, D_\e$ and $J_\e$ in the weak form \eqref{eq:WeakForm:Microproblem} can be expressed as rational functions of the  derivatives of $\psieps$, we can conclude the the strong two-scale convergence for them in Lemma \ref{lem:LemConvData} afterwards.

\begin{lemma}\label{lem:StrongTwoScaleConvPsi}
Let $\reps$ be the subsequence from Proposition \ref{prop:Comapctness} and $\psieps$ be given by \eqref{eq:def:check_psi_eps} and $\check{\psieps}$ be given by \eqref{eq:def:psi_eps}.
Then, it holds 
\begin{align*}
\e^{|\alpha|-1} \partial_{x_{\alpha}} \widecheck{\psieps} &\rightsts{p} \partial_{y_{\alpha}} \widecheck{\psi_0}, 
\\
\e^{|\alpha|} \partial_{x_\alpha} \nabla \psieps  &\rightsts{p}  \partial_{y_\alpha} \nabla_y \psi_0,
\\
\e^{|\alpha|-1} \partial_t\partial_{x_{\alpha}} \psieps =  \e^{|\alpha|-1} \partial_t\partial_{x_{\alpha}} \widecheck{\psieps} &\rightsts{p} \partial_t\partial_{y_{\alpha}} \widecheck{\psi_0} =  \partial_t\partial_{y_{\alpha}} \psi_0 
\end{align*}
for all $\alpha \in  \N_0^n$.
\end{lemma}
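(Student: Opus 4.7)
The strategy is to reduce each of the three claimed strong two-scale convergences to the strong $L^p$ convergences $\e^{-1}\Reps\to R_0$ and $\e^{-1}\partial_t\Reps\to\partial_t R_0$ from Proposition \ref{prop:Comapctness}\ref{CompactnessReps}, exploiting that $\Reps$ is piecewise constant on the cells $\e(k+Y)$. Since the spatial derivatives of the cell-constant factor $\e^{-1}\Reps$ vanish almost everywhere, an induction on $|\alpha|$ applied to $\widecheck{\psieps}(t,x)=\e\,\check{\psi}\bigl(\e^{-1}\Reps(t,x);\{\fxe\}\bigr)$ gives
\begin{align*}
\e^{|\alpha|-1}\,\partial_{x_\alpha}\widecheck{\psieps}(t,x) &= (\partial_{y_\alpha}\check{\psi})\bigl(\e^{-1}\Reps(t,x);\{\fxe\}\bigr),
\\
\e^{|\alpha|-1}\,\partial_t\partial_{x_\alpha}\widecheck{\psieps}(t,x) &= (\partial_R\partial_{y_\alpha}\check{\psi})\bigl(\e^{-1}\Reps(t,x);\{\fxe\}\bigr)\cdot\e^{-1}\partial_t\Reps(t,x).
\end{align*}
Note that the jumps of $\Reps$ across cell interfaces cause no trouble, because $\check{\psi}(R;\cdot)$ vanishes in a neighbourhood of $\partial Y$ by the cut-off $\chi$ introduced in Lemma \ref{lem:CellTrafo}.

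The central ingredient is the following \emph{composition lemma}: if $F\in C^0([\rmin,\rmax]\times \overline{Y^*})$ is extended $Y$-periodically in $y$, then
\begin{align*}
F\bigl(\e^{-1}\Reps(t,x);\{\fxe\}\bigr)\;\rightsts{p}\;F(R_0(t,x);y)
\quad\text{on } (0,T)\times\Omega\times Y^*.
\end{align*}
I would prove this via the unfolding operator $\teps$ introduced in Appendix \ref{SectionTwoScaleConvergence}. Because $\Reps(t,\cdot)$ is cell-constant and $\teps(\{\fxe\})=y$, one has (up to a boundary strip of measure $O(\e)$) the pointwise identity
\begin{align*}
\teps\bigl[F(\e^{-1}\Reps;\{\fxe\})\bigr](t,x,y)=F\bigl(\teps(\e^{-1}\Reps)(t,x,y);y\bigr).
\end{align*}
Combined with the standard fact $\teps(\e^{-1}\Reps)\to R_0$ strongly in $L^p((0,T)\times\Omega\times Y^*)$, the uniform continuity of $F$ on the compact set $[\rmin,\rmax]\times\overline{Y^*}$, and the uniform bounds $\e^{-1}\Reps, R_0\in[\rmin,\rmax]$, dominated convergence yields strong $L^p$ convergence of the unfolded sequence, which is equivalent to the claimed strong two-scale convergence.

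Applying the composition lemma with $F=\partial_{y_\alpha}\check{\psi}$, which is smooth on $[\rmin,\rmax]\times\overline{Y^*}$ by Lemma \ref{lem:CellTrafo}, yields the first claim. The second follows by writing $\nabla\psieps=I+\nabla\widecheck{\psieps}$ and using the first claim componentwise (for $|\alpha|=0$ the identity contributes $I=\nabla_y y$, which together with the limit of $\nabla\widecheck{\psieps}$ reconstructs $\nabla_y\psi_0$; for $|\alpha|\ge 1$ the $I$-term disappears). For the third claim, the composition lemma applied to $F=\partial_R\partial_{y_\alpha}\check{\psi}$ (uniformly bounded by Lemma \ref{lem:CellTrafo}) together with the strong $L^p$ convergence $\e^{-1}\partial_t\Reps\to\partial_t R_0$ from Proposition \ref{prop:Comapctness}\ref{CompactnessReps} give the desired result, since the product of a uniformly bounded, strongly two-scale convergent sequence and a strongly $L^p$ convergent sequence is strongly two-scale convergent, with limit
\begin{align*}
(\partial_R\partial_{y_\alpha}\check{\psi})(R_0;y)\,\partial_t R_0=\partial_t\partial_{y_\alpha}\check{\psi}_0=\partial_t\partial_{y_\alpha}\psi_0.
\end{align*}
The only non-routine point is the verification of the unfolding identity above at the macroscopic boundary strip $\Omega\setminus\bigcup_{k\in K_\e}\e(k+Y)$, which however has measure $O(\e)$ and therefore contributes a vanishing error as $\e\to 0$.
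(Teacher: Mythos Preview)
Your proof is correct and follows essentially the same route as the paper: both arguments apply the unfolding operator to the explicit formula $\e^{|\alpha|-1}\partial_{x_\alpha}\widecheck{\psieps}=(\partial_{y_\alpha}\check{\psi})(\e^{-1}\Reps;\{\fxe\})$, use the cell-constancy of $\Reps$ to obtain the pointwise identity $\teps[(\partial_{y_\alpha}\check{\psi})(\e^{-1}\Reps;\{\fxe\})]=(\partial_{y_\alpha}\check{\psi})(\teps(\e^{-1}\Reps);y)$, and then pass to the limit by dominated convergence together with the continuity of $\check{\psi}$ in $R$ (your ``composition lemma'' is exactly the inline computation the paper performs). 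Your remark about a boundary strip is harmless but unnecessary here, since $\Omega=(a,b)$ with $a,b\in\Z^n$ and $\e^{-1}\in\N$ ensure the cells tile $\Omega$ exactly.
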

\begin{proof}
We follow the argumentation of \cite{WIEDEMANN2023113168}.
We note that $R_\e \rightsts{p} R_0$  and $\partial_t R_\e \rightsts{p} \partial_t R_0$ yield the pointwise convergences ${\teps R_\e(t,x) \to R_0(t,x)}$ and ${\partial_t \teps R_\e(t,x) \to \partial_t R_0(t,x)}$ for a.e.~$(t,x)\in (0,T) \times \Omega$ for a subsequence. By employing the continuity of $r \mapsto \partial_{y_{\alpha}}\widecheck{\psi}(r,y)$ and  $r \mapsto \partial_{y_{\alpha}}\partial_t\widecheck{\psi}(r,y)$ for $\alpha \in \N_0^n$ , we obtain
\begin{align*}
\e^{|\alpha| - 1} \teps \partial_{x_\alpha} \check{\psi}_\e(t,x,y)
&= \e^{- 1} \partial_{y_\alpha}\teps\check{\psi}_\e(t,x,y)
= 
\partial_{y_\alpha} (\check{\psi}(\teps \Reps(t,x), \{[x] + \e y\}) )
\\&
= 
\partial_{y_\alpha} \check{\psi}(\teps \Reps(t,x), y) \to \partial_{y_\alpha} \check{\psi}(R_0(t,x), y)  =
\partial_{y_\alpha} \check{\psi}_0(t,x, y)
\end{align*}
and
\begin{align*}
\e^{|\alpha| - 1} \teps \partial_t \partial_{x_\alpha} \check{\psi}_\e(t,x,y)
&= \e^{- 1} \partial_{y_\alpha}\teps \partial_t\check{\psi}_\e(t,x,y)
= 
\partial_{y_\alpha} \partial_t (\check{\psi}(\teps \Reps(t,x), \{[x] + \e y\}))
\\
&= 
\partial_{y_\alpha} \partial_t(\check{\psi}(\teps \Reps(t,x), y) )
=
\partial_{y_\alpha}  \partial_R\check{\psi}(\teps \Reps(t,x), y) \partial_t \teps \Reps(t,x)
\\
&\to \partial_{y_\alpha} \partial_R \check{\psi}(R_0(t,x), y)  \partial_t R_0 (t,x)=
\partial_{y_\alpha}  \partial_t \check{\psi}_0(t,x, y)
\end{align*}
for a.e.~$(t,x,y)\in (0,T) \times \Omega \times Y$.
Then, we obtain $    \e^{|\alpha| - 1} \teps \partial_{x_\alpha} \check{\psi}_\e \to \partial_{y_\alpha} \check{\psi}_0$ and $ \e^{|\alpha| - 1} \teps \partial_{x_\alpha} \partial_t\check{\psi}_\e \to \partial_{y_\alpha} \partial_t\check{\psi}_0$  by the Lebesgue convergence theorem,
which yields
$\e^{|\alpha| - 1} \partial_{x_\alpha} \check{\psi}_\e 
\rightsts{p}
\partial_{y_\alpha} \check{\psi}_0$ and $\e^{|\alpha| - 1} \partial_{x_\alpha} \partial_t\check{\psi}_\e 
\rightsts{p}
\partial_{y_\alpha} \partial_t \check{\psi}_0$ .
Since this argumentation holds for every arbitrary subsequence, it holds for the whole sequence.
Since $F_\e = \partial_x \widecheck{\psieps}+ \1$ and $F_0 = \partial_y \widecheck{\psi_0}+ \1$, we can transfer the convergence to $F_\e$.
Moreover, it can be easily oserved that $\partial_t \psieps = \partial_t \widecheck{\psieps}$ as well as $\partial_t \psi_0 = \partial_t \widecheck{\psi_0}$.
\end{proof}

\begin{lemma}\label{lem:LemConvData}
Let $\reps$ be the subsequence from Proposition \ref{prop:Comapctness}. Then, it holds for every $p\in [1,\infty)$ that (for $i=1,\ldots,n$) 
\begin{align}
\begin{aligned}\label{eq:StrongTwoSCaleCoeff}
&\Feps \rightsts{p} F_0, \quad
\Jeps \rightsts{p} J_0, \quad
J_\e^{-1} \rightsts{p} J_0^{-1}, \quad
F_\e^{-1} \rightsts{p} F_0^{-1},
\\
&D_{\e} \rightsts{p} D_0, \quad
\Aeps \rightsts{p} A_0,\quad
\Aeps^{-1} \rightsts{p} A_0^{-1}, \quad
\e \partial_{x} \Aeps \rightsts{p} \partial_{y} A_0, \\
&\e \partial_{x} \Aeps^{-1} \rightsts{p} \partial_{y} A_0^{-1}, \quad \partial_t \jeps \rightsts{p} \partial_t J_0, \quad
\e^{-1} \partial_{x} \partial_t \psieps \rightsts{p} \partial_t \partial_{y} \psi_0.
\end{aligned}
\end{align}
The convergence of $\jeps$ is also valid on the surface $\geps$. Further, we obtain 
\begin{align}\label{TSconvergenceEepsWeps}
    \e e_{\e}(\weps) \rightwts{p,2} e_0(w_0).
\end{align}
\end{lemma}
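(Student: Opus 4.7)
The plan is to derive every convergence in \eqref{eq:StrongTwoSCaleCoeff}--\eqref{TSconvergenceEepsWeps} from Lemma~\ref{lem:StrongTwoScaleConvPsi} by observing that all the coefficients in question are smooth (polynomial or rational with denominator uniformly bounded away from zero) functions of $\Feps$ and its $\e$-scaled derivatives, and combining this with the $\e$-uniform $L^{\infty}$ bounds of Lemma~\ref{lem:EstimatesEpsTrafo}. The workhorse is the elementary fact that strong two-scale convergence is preserved under products of sequences which are uniformly bounded in $L^{\infty}$, and, if the denominator stays bounded away from zero, also under quotients.

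Since $\Feps = \partial \widecheck{\psieps} + \1$, Lemma~\ref{lem:StrongTwoScaleConvPsi} with $|\alpha|=1$ gives $\Feps \rightsts{p} F_0$ directly. The determinant and the adjugate are polynomial in the matrix entries, so I immediately obtain $\Jeps = \det(\Feps) \rightsts{p} J_0$ and $\Aeps = \adje \rightsts{p} A_0$. The lower bound $\Jeps \geq c_J$ from \eqref{eq:J>c_J} upgrades this to $\Jeps^{-1} \rightsts{p} J_0^{-1}$, and then $\Feps^{-1} = \Aeps/\Jeps$ and $\Aeps^{-1} = \Feps/\Jeps$ follow as quotients, while $\Deps = \Jeps \Feps^{-1} D \Feps^{-\top}$ is a product of such sequences.

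For the $\e\partial_x$-quantities, I write $\e\partial_{x_j}\Aeps$ as a polynomial in $\Feps$ and $\e\partial_{x_j}\Feps = \e\partial_{x_j}\partial_x \widecheck{\psieps}$; the latter converges strongly two-scale to $\partial_{y_j}F_0$ by Lemma~\ref{lem:StrongTwoScaleConvPsi} (with $|\alpha|=2$), and together with $\Feps \rightsts{p} F_0$ this yields $\e\partial_x \Aeps \rightsts{p} \partial_y A_0$. The identity $\e\partial_x \Aeps^{-1} = -\Aeps^{-1}(\e\partial_x \Aeps)\Aeps^{-1}$ transfers the convergence to $\Aeps^{-1}$. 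Jacobi's formula $\partial_t \Jeps = \Jeps\, \Feps^{-\top} : \partial_t \Feps$, combined with $\partial_t \Feps = \partial_t \partial_x \widecheck{\psieps} \rightsts{p} \partial_t F_0$ (Lemma~\ref{lem:StrongTwoScaleConvPsi}, $|\alpha|=1$) and the already established convergences of $\Jeps$ and $\Feps^{-1}$, gives $\partial_t \Jeps \rightsts{p} \partial_t J_0$, and the last convergence in \eqref{eq:StrongTwoSCaleCoeff} is a direct restatement of Lemma~\ref{lem:StrongTwoScaleConvPsi}. Convergence of $\Jeps$ on $\geps$ follows the same route, using that $\Reps$ is piecewise constant and that $\e^{-1}\Reps$ converges strongly two-scale on $\geps$ by Proposition~\ref{prop:Comapctness}\ref{CompactnessReps}, together with the smoothness of $R \mapsto \det(\partial_y\psi(R;\cdot))$ and dominated convergence after unfolding.

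Finally, for \eqref{TSconvergenceEepsWeps}, I rewrite $\e\, e_\e(\weps) = \tfrac{1}{2}\bigl(\Feps^{-\top}(\e\nabla \weps) + (\e\nabla \weps)^\top \Feps^{-1}\bigr)$. By Proposition~\ref{prop:Comapctness}\ref{CompactnessWepsPeps}, $\e\nabla \weps \rightwts{p,2} \nabla_y w_0$ weakly, and I have just shown $\Feps^{-1} \rightsts{p} F_0^{-1}$ strongly; both factors are uniformly bounded in $L^{\infty}$, so the standard fact that a weakly two-scale convergent sequence times a strongly two-scale convergent one (with uniform $L^{\infty}$-bounds) converges weakly two-scale to the product of the limits yields $\e\, e_\e(\weps) \rightwts{p,2} e_0(w_0)$. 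The main obstacle here is essentially bookkeeping: tracking the $\e$-scalings so that each factor lands in a form covered by Lemma~\ref{lem:StrongTwoScaleConvPsi} and its product/quotient extensions; once the scalings are matched, no individual step poses any real difficulty.
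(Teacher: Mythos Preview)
Your proof is essentially the same as the paper's: both reduce \eqref{eq:StrongTwoSCaleCoeff} to Lemma~\ref{lem:StrongTwoScaleConvPsi} by observing that all coefficients are polynomials in the entries of $\Feps$, $\e\partial_x\Feps$, $\partial_t\Feps$ and $J_\e^{-1}$, with the lower bound $J_\e\geq c_J$ handling the inverse; your treatment is slightly more explicit (adjugate, Jacobi's formula, $\Feps^{-1}=\Aeps/\Jeps$) where the paper simply cites earlier works for the abstract mechanism.

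One small slip in your argument for \eqref{TSconvergenceEepsWeps}: you write that ``both factors are uniformly bounded in $L^{\infty}$'', but $\e\nabla\weps$ is only bounded in $L^{\infty}((0,T),L^2(\Oe))$, not in $L^{\infty}((0,T)\times\Oe)$. This does not break your conclusion, since the product rule you actually invoke (strong two-scale $+$ $L^{\infty}$-bound on one factor, weak two-scale on the other) only needs the $L^{\infty}$-bound on $\Feps^{-1}$. The paper is a little more cautious here: it first obtains $\e e_\e(\weps)\rightwts{p,q} e_0(w_0)$ for $q<2$ via unfolding and then upgrades to $q=2$ using the uniform $L^{p}((0,T),L^2)$-bound on $\e e_\e(\weps)$.
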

\begin{proof}
The convergences of $F_\e, J_\e$ and $D_\e$ were shown in 
\cite[Corollary~8]{GahnPop2023Mineral} using the structure of $\psi_\e$ and the strong convergence of $R_\e$. In \cite[Lemma~3.3]{AA23}, \cite[Lemma~10]{WIEDEMANN2023113168} and \cite[Lemma~4.6]{wiedemann2021homogenisation}, a more abstract argumentation was used in order to provide additionally the strong convergences for $J_\e^{-1}$, $F_\e^{-1}$, $A_\e$, $A_\e^{-1}$, $\e  \partial_x A_\e$ and $\e \partial_x A_\e^{-1}$ from the first two convergences of Lemma \ref{lem:StrongTwoScaleConvPsi}. The strong two-scale convergence of $J_\e$ follows with the two-scale convergence of $\e^{-1} \partial_t \partial_x \psi_\e$ by the same argumentation.
For sake of completeness, we recap this argumentation briefly. 
First, we note that the convergence of $\Feps = \nabla \psieps^\top$ is given by Lemma \ref{lem:StrongTwoScaleConvPsi}.
Since $J_\e = \det(F_\e)$ is a polynomial with respect to the entries of $F_\e$, the strong two-scale convergence can be transferred on $J_\e$. Having the bound for $J_\e\geq c_J$ from below, we can transfer the strong two-scale convergence of $J_\e$ to $J_\e^{-1}$.

In the next step, we notice that all other terms on the left-hand side in \eqref{eq:StrongTwoSCaleCoeff} can be formulated as polynomials with respect to the entries of $\nabla \psieps$, $\e \partial_x \partial_x \psieps$, $\partial_t\nabla \psieps$, $\e \partial_t \partial_x \partial_x \psieps$ and $J_\e^{-1}$. These convergences are provided by Lemma \ref{lem:StrongTwoScaleConvPsi} and, thus, can be transferred to these polynomials which yields \eqref{eq:StrongTwoSCaleCoeff}.

The strong two-scale convergence of $J_\e$ on the surface $\geps$ follows with the same argumentation and using the unfolding operator for periodic surfaces in the proof of Lemma \ref{lem:StrongTwoScaleConvPsi}.

It remains to show the convergence of $\e e_{\e}(\weps)$ in \eqref{TSconvergenceEepsWeps}.
From the strong two-scale convergence of $\Feps^{-1}$ in $L^p$ for every $p\in [1,\infty)$ and the weak two-scale convergence of $\e \nabla w_\e$ in $L^2$, we obtain together with the characterization via the unfolding operator that  $\e e_{\e}(\weps) \rightwts{p,q} e_0(w_0)$ for all $p \in [1, \infty)$ and $q\in [1,2)$. Then, the essential boundedness of $F_\e$ yields that $\e e_\e (w_\e)$ is bounded in ${L^p((0,T);L^2(\Oe))}$ and, thus, $\e e_{\e}(\weps) \rightwts{p,2} e_0(w_0)$.
\end{proof}

Finally, we have the following weak compactness result for the time derivative of $\jeps \ueps$:
\begin{corollary}\label{Cor:WeakConDtJepsueps}
Let $\widehat{\jeps \ueps}$ denote the zero extension of $\jeps \ueps$. Then, it holds up to a subsequence that
\begin{align}
    \partial_t \left(\widehat{\jeps \ueps}\right) \rightharpoonup \partial_t (\theta u_0) \quad \mbox{ weakly in } L^2((0,T),H^{-1}(\Omega)),
\end{align}
where the porosity $\theta$ is defined in $\eqref{Def:Porosity}$.
\end{corollary}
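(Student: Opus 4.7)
The strategy is to produce an $\e$-uniform bound on $\partial_t \widehat{\jeps\ueps}$ in $L^2((0,T), H^{-1}(\Omega))$, extract a weakly convergent subsequence, and identify the limit via the strong two-scale convergences already established.

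First, I would set up the bound. For $\phi \in H^1_0(\Omega)$, the restriction $\phi|_{\Oe}$ lies in $H^1_{\partial \Omega}(\Oe)$ with $\|\phi|_{\Oe}\|_{H^1(\Oe)} \leq \|\phi\|_{H^1(\Omega)}$. Unwinding the definition of the distributional time derivative of the zero extension and integrating by parts in time on $\Oe$, one obtains, a.e.\ in $(0,T)$,
\begin{align*}
\langle \partial_t \widehat{\jeps\ueps}, \phi \rangle_{H^{-1}(\Omega), H^1_0(\Omega)} = \langle \partial_t(\jeps\ueps), \phi|_{\Oe} \rangle_{H^1_{\partial\Omega}(\Oe)', H^1_{\partial\Omega}(\Oe)}.
\end{align*}
Together with the a priori bound \eqref{AprioriEstimatesJepsUeps} and the restriction estimate above, this yields $\|\partial_t \widehat{\jeps\ueps}\|_{L^2((0,T), H^{-1}(\Omega))} \leq C$ uniformly in $\e$. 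By reflexivity, up to a subsequence $\partial_t \widehat{\jeps\ueps} \rightharpoonup \xi$ weakly in $L^2((0,T), H^{-1}(\Omega))$ for some $\xi$.

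Next, I would identify $\xi$ by first passing to the limit in $\widehat{\jeps\ueps}$ itself. By Proposition \ref{prop:Comapctness}\ref{CompactnessUeps}, $\chi_{\Oe}\ueps \rightsts{2} \chi_{Y^{\ast}} u_0$, and by Lemma \ref{lem:LemConvData}, $\jeps \rightsts{2} J_0$. Since $\jeps$ is uniformly essentially bounded (Lemma \ref{lem:EstimatesEpsTrafo}), the product converges strongly in the two-scale sense: $\widehat{\jeps\ueps} = \chi_{\Oe}\jeps\ueps \rightsts{2} \chi_{Y^{\ast}} J_0 u_0$. Taking the mean over $Y$ and using that $\psi_0(t,x,\cdot) : Y^{\ast} \to Y^{\ast}(t,x)$ is a diffeomorphism, the change-of-variables formula yields
\begin{align*}
\int_{Y^{\ast}} J_0(t,x,y) \, \mathrm{d}y = |Y^{\ast}(t,x)| = \theta(t,x),
\end{align*}
so $\widehat{\jeps\ueps} \rightharpoonup \theta u_0$ weakly in $L^2((0,T) \times \Omega)$, and in particular in $\mathcal{D}'((0,T) \times \Omega)$.

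Finally, since taking the distributional time derivative is continuous with respect to distributional convergence, $\partial_t \widehat{\jeps\ueps} \to \partial_t(\theta u_0)$ in $\mathcal{D}'((0,T) \times \Omega)$. Uniqueness of distributional limits then forces $\xi = \partial_t(\theta u_0)$, and since this limit is uniquely determined, the convergence holds along the full subsequence already fixed by Proposition \ref{prop:Comapctness}. I do not expect any serious obstacle: the bound is essentially a consequence of \eqref{AprioriEstimatesJepsUeps} combined with restriction/zero-extension duality, and the identification $\theta = \int_{Y^{\ast}} J_0 \, \mathrm{d}y$ is just the Jacobian formula for $\psi_0(t,x,\cdot)$.
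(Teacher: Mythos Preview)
Your proposal is correct and follows the natural approach: establish the $\e$-uniform $L^2((0,T),H^{-1}(\Omega))$ bound via restriction/zero-extension duality, then identify the weak limit through the strong two-scale convergence of $\chi_{\Oe}\jeps\ueps$ together with the Jacobian identity $\theta=\int_{Y^{\ast}}J_0\,\mathrm{d}y$. The paper does not spell out the argument but defers to an earlier reference and records precisely this identity, so your proof matches what is intended.
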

\begin{proof}
This proof was shown in \cite[p.133--134]{GahnPop2023Mineral} for slightly different boundary conditions and can be easily adapted to our setting. We emphasise that in our notations we have
\begin{align*}
    \theta(t,x) = |Y^{\ast}(t,x)| = \int_{Y^{\ast}(t,x) } \,\mathrm{d}y = \int_{Y^{\ast} } J_0(t,x,y) \,\mathrm{d}y.
\end{align*}
\end{proof}

\subsection{Proof of Theorem \ref{MainTheoremMacroModel}}

Now, we are able to prove our main result Theorem \ref{MainTheoremMacroModel}, \ie~pass to the limit $\e \to 0$ in the weak formulation of the microscopic model.
The  ODE for $R_0$ in $\eqref{MacroODE:R0}$ -- $\eqref{MacroODE_IC}$ follows directly from Proposition \ref{prop:Comapctness}\ref{CompactnessReps}. The Darcy law in $\eqref{DarcyLawPDE}$ -- $\eqref{DarcyLawBC}$ was derived in \cite{wiedemann2021homogenisation} for a given evolution of the microscopic domain (with the same properties of $\psieps$). Hence, the results are also valid in our setting. However, for the sake of completeness, we will sketch here the essential steps and also use a slightly different proof with an orthogonality argument, see Lemma \ref{Lem:TwoPressureDecomp} in the appendix, to obtain the two-pressure Stokes model, which was also used by Allaire in \cite{Hornung1997}. Further, throughout the proof, we work here on the fixed reference element and additionally formulate  the cell problems and the Darcy velocity on the fixed cell. Finally,   the derivation of the  macroscopic transport equation $\eqref{MacroTransportPDE}$ -- $\eqref{MacroTransportIC}$ follows similar lines as for pure diffusion without convection in \cite[Section 5.2]{GahnPop2023Mineral} and, therefore, we will focus  only on the convective term in our proof below.

We start with the derivation of the Darcy law, where we mainly follow the ideas of Allaire in \cite{Hornung1997}, and first obtain  a generalised  two-pressure Stokes model generalised for evolving microdomains. For this, we introduce the space
\begin{align}
\begin{aligned}\label{SpaceV}
    \mathcal{V}:= \bigg\{ \eta \in  &L^2( \Omega , H_{\per,0}^1(Y^{\ast})^n) \, : \, 
    \\ 
    & \nabla_y \cdot \eta = 0 \mbox{ in } \Omega \times Y^{\ast}, \, \nabla_x \cdot \int_{Y^{\ast}} \eta \,\mathrm{d}y = 0 \mbox{ in }  \Omega \bigg\}
\end{aligned}
\end{align}
with $H_{\per,0}^1(Y^{\ast}):= \left\{ v \in  H_{\per}^1(Y^{\ast}) \, : \, v = 0 \mbox{ on } \Gamma\right\}.$
We also define the subspace 
\begin{align*}
    \mathcal{V}^{\infty}:= \mathcal{V} \cap C^{\infty}(\overline{\Omega}, H_{\per,0}^1(Y^{\ast})^n).
\end{align*}
The space $\mathcal{V}^{\infty}$ is a dense subspace of $\mathcal{V}$. For $\eta \in \mathcal{V}^{\infty}$ and $\psi \in C_0^{\infty}(0,T)$, we choose 
\begin{align}\label{TestFunctionEtaeps}
    \eta_{\e}(t,x):= A_{\e}^{-1} \psi(t)\eta\left(x,\fxe\right),
\end{align}
as a test function in $\eqref{eq:weak_w_eps_fixed_domain}$
to obtain after integration with respect to time and using $A_{\e} : \nabla \eta_{\e } = \psi \nabla \cdot \left(\eta \left(\cdot_x , \frac{\cdot_x}{\e} \right)  \right)=\psi \nabla_x \cdot \eta \left(\cdot_x , \frac{\cdot_x}{\e} \right) $
\begin{align}
\begin{aligned}\label{AuxEq:FluidMicroStart}
    \e^2 \int_0^T &\int_{\oe} \jeps e_{\e} (\weps) : e_{\e}(\eta_{\e}) \,\mathrm{d}x \,\mathrm{d}t - \int_0^T \int_{\oe } \qeps \nabla_x \cdot \eta\left(x,\fxe\right) \psi \,\mathrm{d}x \,\mathrm{d}t
    \\
    &= - \e^2 \int_0^T \int_{\oe} \jeps e_{\e}(\partial_t \psieps ) : e_{\e } (\eta_{\e }) \,\mathrm{d}x \,\mathrm{d}t + \int_0^T \int_{\oe } \left[ \jeps \Aeps^{-\top} h_{\e } - \nabla p_\mathrm{b} \right] \cdot \eta\left(x,\fxe\right) \psi \,\mathrm{d}x \,\mathrm{d}t.
\end{aligned}
\end{align}
From the strong two-scale convergence results in Lemma \ref{lem:LemConvData}  and the strong two-scale convergence of $\eta\left(\cdot,\frac{\cdot}{\e}\right)$, which follows from the oscillation lemma \cite[Lemma 1.3]{Allaire_TwoScaleKonvergenz}, we obtain that the unfolded sequence $\teps(\e e_{\e}( \eta_{\e}))$ converges strongly in $L^s((0,T)\times \Omega\times Y^{\ast})$ to $e_0(A_0^{-1} \eta)\psi$ for $s \in [1,2)$ and, therefore, (up to a subsequence) also pointwise almost everywhere in $(0,T)\times \Omega \times Y^{\ast}$. Further, it holds due to the essential bounds for $F_{\e}^{-1}$ and $\Aeps^{-1}$ (and its derivative) almost everywhere in $(0,T)\times \Omega \times Y^{\ast}$
\begin{align*}
    \left|\teps(\e e_{\e}(\eta_{\e}))\right| \le C \teps\left(\eta\left(\cdot,\dfrac{\cdot}{\e}\right)\right).
\end{align*}
Since the right-hand side converges strongly in $L^2((0,T)\times \Omega \times Y^{\ast})$, the dominated convergence theorem of Lebesgue implies $\e e_{\e}(\eta_{\e}) \rightsts{2} e_0(A_0^{-1}\eta) \psi$.
Further, Lemma \ref{lem:EstimatesEpsTrafo} implies that 
\begin{align*}
    \|e_{\e}(\partial_t \psieps)\|_{L^2((0,T)\times \oe)} \le C
\end{align*}
for a constant $C>0$ independent of $\e$. Hence, the first term on the right-hand side in $\eqref{AuxEq:FluidMicroStart}$ is of order $\e$ and vanishes for $\e \to 0$. Using the compactness results from Proposition \ref{prop:Comapctness} and Lemma \ref{lem:LemConvData}, we can pass to the limit $\e \to 0$ in $\eqref{AuxEq:FluidMicroStart}$ and obtain almost everywhere in $(0,T)$
\begin{align*}
  \int_{\Omega} \int_{Y^{\ast}} J_0 e_0(w_0) : e_0(A_0^{-1} \eta) dy dx  -  \int_{\Omega} \int_{Y^{\ast}} \left[J_0 A_0^{-\top}h_0 - \nabla p_\mathrm{b} \right] \cdot \eta dy dx  = 0.
\end{align*}
By density this result is valid for all $\eta \in \mathcal{V}$. The left-hand side defines a functional on $L^2(\Omega, H_{\per,0}^1(Y^{\ast})^n)$. Due to Lemma \ref{Lem:TwoPressureDecomp} in the appendix, there exist $P_0 \in L^{\infty}((0,T),H^1_0(\Omega))$ and $q_1 \in L^{\infty}((0,T)\times \Omega \times Y^{\ast})/\R$ such that for all $\eta \in L^2(\Omega, H_{\per,0}^1(Y^{\ast})^n)$ it holds almost everywhere in $(0,T)$ that
\begin{align}
\begin{aligned}\label{AuxEquationFluidMacro}
    \int_{\Omega}&\int_{Y^{\ast}} J_0 e_0(w_0) : e_0 (A_0^{-1} \eta) \,\mathrm{d}y \,\mathrm{d}x + \int_{\Omega}\int_{Y^{\ast}} \nabla P_0 \cdot \eta \,\mathrm{d}y \,\mathrm{d}x - \int_{\Omega}\int_{Y^{\ast}} q_1 \nabla_y \cdot \eta \,\mathrm{d}y \,\mathrm{d}x
    \\
    &= \int_{\Omega}\int_{Y^{\ast}} J_0 h_0 \cdot A_0^{-1} \eta \,\mathrm{d}y \,\mathrm{d}x  - \int_{\Omega}\int_{Y^{\ast}} \nabla p_\mathrm{b} \cdot \eta \,\mathrm{d}y \,\mathrm{d}x.
\end{aligned}
\end{align}
Repeating the arguments above, where we choose in the test function in $\eqref{TestFunctionEtaeps}$ the function $\eta$ as $\eta \in C_0^{\infty}(\Omega,C_{\per,0}^{\infty}(Y^{\ast})^n)$ with $\nabla_y \cdot \eta = 0$, and subtracting the resulting equation from $\eqref{AuxEquationFluidMacro}$, we obtain by a density argument for all $\eta \in L^2(\Omega,H_{\per,0}^1(Y^{\ast})^n)$ with $\nabla_y \cdot \eta =0$ almost everywhere in $(0,T)$
\begin{align*}
 \int_{\Omega}\int_{Y^{\ast}} \nabla P_0 \cdot \eta \,\mathrm{d}y + \int_{\Omega}\int_{Y^{\ast}} Q_0 \nabla_x \cdot \eta \,\mathrm{d}y \,\mathrm{d}x = 0.
\end{align*}
Using \cite[Lemma 2.10]{Allaire_TwoScaleKonvergenz}, we obtain $P_0 = Q_0 = p_0 - p_\mathrm{b}$ (where $p_0$ is the limit of $\peps$, see also Proposition \ref{prop:Comapctness}) and, in particular, $p_0 \in L^2((0,T),H^1(\Omega))$. Altogether, replacing $\eta $ $\eqref{AuxEquationFluidMacro}$ with $A_0\eta $ (which is still an admissible test function), we get almost everywhere in $(0,T)$
\begin{align}
\begin{aligned}\label{TwoPressureStokesWeakForm}
    \int_{\Omega}&\int_{Y^{\ast}} J_0 e_0(w_0) : e_0 (\eta) \,\mathrm{d}y \,\mathrm{d}x + \int_{\Omega}\int_{Y^{\ast}} A_0^{\top}  \nabla p_0 \cdot \eta \,\mathrm{d}y \,\mathrm{d}x - \int_{\Omega}\int_{Y^{\ast}} q_1 \nabla_y \cdot (A_0 \eta) \,\mathrm{d}y \,\mathrm{d}x
    \\
    &= \int_{\Omega}\int_{Y^{\ast}} J_0 h_0 \cdot \eta \,\mathrm{d}y \,\mathrm{d}x.
\end{aligned}
\end{align}
In other words, the triple $(w_0,p_0,q_1)$ is the unique weak solution of the two-pressure Stokes model
\begin{align}
\begin{aligned}\label{TwoPressureStokesFixedCell}
    -\nabla_y \cdot (A_0 e_0(w_0)) + A_0^{\top} \nabla_x p_0 + A_0^{\top } \nabla_y q_1 &= J_0 h_0 &\mbox{ in }& (0,T)\times \Omega \times Y^{\ast},
    \\
    -\nabla_y \cdot (A_0 w_0) &= 0 &\mbox{ in }& (0,T)\times \Omega \times Y^{\ast},
    \\
    w_0 &= 0 &\mbox{ on }& (0,T)\times \Omega \times \Gamma,
    \\
    -\nabla_x \cdot \int_{Y^{\ast}} A_0 w_0 dy &= 0 &\mbox{ in }& (0,T)\times \Omega,
    \\
    p_0 &= p_\mathrm{b} &\mbox{ on }& (0,T)\times \partial \Omega,
    \\
    w_0,\,q_1 \mbox{ is } Y\mbox{-periodic}.
   \end{aligned}
\end{align}
An elementary calculation shows that for all $\xi \in \R^n$ it holds almost everywhere in $(0,T)\times \Omega \times Y^{\ast}$ that
\begin{align}\label{eq:Trafo:RechteSeiteStokes}
    J_0 \xi = A_0^{\top } \left[\nabla_y \left((\psi_0 - y )\cdot \xi \right) \right] + A_0^{\top }\xi.
\end{align}
Hence, we can write the first equation in $\eqref{TwoPressureStokesFixedCell}$ as
\begin{align*}
    -\nabla_y \cdot (A_0 e_0(w_0)) + A_0^{\top }\nabla_y \left[q_1 - (\psi_0 -y) \cdot h_0 \right] = A_0^{\top } \left[h_0 - \nabla_x p_0\right].
\end{align*}
We emphasise that this substitution holds also for the weak formulation, where we note that $(\psi_0 - y) \cdot h_0$, $A_0^\top$ and the test functions are $Y$-periodic and that the test functions are zero on $\Gamma$.
This gives rise to introduce the following cell problems on the fixed reference element $Y^{\ast}$:  We denote for $i=1,\ldots,n$ by $(w_i,\pi_i) \in L^{\infty}((0,T) \times \Omega,H^1_{\per,0}(Y^*)^n) \times L^{\infty}((0,T) \times \Omega; L^2(Y^*)/\R)$ the unique weak solutions of the cell problems
\begin{align}
\begin{aligned}\label{CellProblemsFluidFixed2}
     -\nabla_y \cdot (A_0 e_0(w_i) ) + A_0^{\top} \nabla_y \pi_i &= A_0^{\top}e_i &\mbox{ in }& (0,T)\times \Omega \times Y^{\ast},
     \\
     \nabla_y \cdot (A_0 w_i) &= 0&\mbox{ in }& (0,T)\times \Omega \times Y^{\ast},
     \\
     w_i, \, \pi_i \mbox{ are } Y\mbox{-periodic}.
\end{aligned}
\end{align}
Since the two-pressure Stokes problem $\eqref{TwoPressureStokesFixedCell}$ admits a unique weak solution, we obtain from the linearity of the problem the following result:
\begin{corollary}
It holds that
\begin{align*}
    w_0 &= \sum_{i=1}^n (h_0 - \nabla_x p_0)_i w_i,
    \\
    q_1 &= (\psi_0 - y)\cdot h_0 + \sum_{i=1}^n (h_0 - \nabla_x p_0)_i \pi_i
\end{align*}
and the Darcy-velocity $v^{\ast}$ is given for almost every $(t,x)\in (0,T)\times \Omega$ by
\begin{align}\label{DefDarcyVelocity}
    v^{\ast} (t,x) := \int_{Y^{\ast}} A_0 w_0 \,\mathrm{d}y= K^{\ast} (h_0 - \nabla p_0),
\end{align}
with the permeability tensor $K^{\ast} \in L^{\infty}((0,T)\times \Omega)^{n\times n}$ defined by (for $i,j=1,\ldots,n$)
\begin{align}\label{Def:PermeabilityTensorFixed}
    K^{\ast}_{ij} &= \int_{Y^{\ast}} J_0 e_0(w_i) : e_0 (w_j) \,\mathrm{d}y = \int_{Y^{\ast}} A_0^{\top} e_i \cdot w_j \,\mathrm{d}y.
\end{align}
In particular, it holds that $\nabla \cdot v^{\ast} = -\partial_t \theta$ with the porosity $\theta$ defined in $\eqref{Def:Porosity}$ and, therefore, $p_0$ fulfills the Darcy law $\eqref{DarcyLawPDE}$ -- $ \eqref{DarcyLawBC}$. 
\end{corollary}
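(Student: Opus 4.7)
The plan is to exploit linearity of the cell-level Stokes problem together with uniqueness of its solution. Using the identity \eqref{eq:Trafo:RechteSeiteStokes}, the momentum equation in \eqref{TwoPressureStokesFixedCell} can be rewritten as
\begin{align*}
-\nabla_y \cdot (A_0 e_0(w_0)) + A_0^{\top} \nabla_y \widetilde{q}_1 = A_0^{\top}(h_0 - \nabla_x p_0) \qquad \text{in } (0,T)\times \Omega \times Y^{\ast},
\end{align*}
with $\widetilde{q}_1 := q_1 - (\psi_0 - y)\cdot h_0$, while the $y$-divergence constraint $\nabla_y \cdot (A_0 w_0)=0$, the trace $w_0=0$ on $\Gamma$ and $Y$-periodicity are unchanged. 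For almost every $(t,x)\in (0,T)\times \Omega$ this is a Stokes problem on $Y^{\ast}$ whose coefficients depend on $(t,x)$ only through $A_0$ and whose right-hand side is the $y$-independent vector $(h_0-\nabla_x p_0)(t,x) = \sum_{i=1}^n (h_0-\nabla_x p_0)_i e_i$. By linearity and uniqueness of the cell problem \eqref{CellProblemsFluidFixed2} (with $\pi_i$ understood modulo constants), the pair $(w_0,\widetilde{q}_1)$ must coincide with $\sum_i (h_0-\nabla_x p_0)_i (w_i,\pi_i)$, which yields the representations asserted for $w_0$ and $q_1$.

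Next I would compute the Darcy velocity from its definition: substituting the representation into \eqref{DefDarcyVelocity} gives
\begin{align*}
v^{\ast}(t,x) = \int_{Y^{\ast}} A_0 w_0\,\mathrm{d}y = \sum_{i=1}^n (h_0-\nabla p_0)_i \int_{Y^{\ast}} A_0 w_i\,\mathrm{d}y,
\end{align*}
so that the natural definition of the permeability is $K^{\ast}_{ij} := e_j \cdot \int_{Y^{\ast}} A_0 w_i\,\mathrm{d}y = \int_{Y^{\ast}} A_0^{\top} e_j \cdot w_i\,\mathrm{d}y$. The equivalence with the first expression in \eqref{Def:PermeabilityTensorFixed} is obtained by testing the weak form of the cell problem for $w_i$ with $\eta = w_j$: the pressure contribution vanishes since $\nabla_y \cdot (A_0 w_j)=0$ in $Y^{\ast}$ and $w_j|_{\Gamma}=0$, producing
\begin{align*}
\int_{Y^{\ast}} A_0 e_0(w_i) : \nabla_y w_j\,\mathrm{d}y = \int_{Y^{\ast}} A_0^{\top} e_i \cdot w_j\,\mathrm{d}y.
\end{align*}
Writing $A_0 = J_0 F_0^{-1}$ and using the identity $(CX):Y = X:(C^{\top}Y)$ together with the symmetry of $e_0(w_i)$, the left-hand side rewrites as $\int_{Y^{\ast}} J_0\, e_0(w_i):e_0(w_j)\,\mathrm{d}y$, which simultaneously gives the third form of $K^{\ast}_{ij}$ and makes its symmetry transparent; it also shows $K^{\ast}_{ij} = \int_{Y^\ast} A_0^\top e_i \cdot w_j\,\mathrm{d}y = \int_{Y^\ast} A_0^\top e_j \cdot w_i\,\mathrm{d}y$, so the two candidate definitions of $K^{\ast}$ agree.

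Finally, the global identity $\nabla\cdot v^{\ast} = -\partial_t \theta$ is read off directly from \eqref{eq:Compactness:div_xw_0} in Proposition~\ref{prop:Comapctness}\ref{CompactnessWepsPeps}, since $v^{\ast} = \int_{Y^{\ast}} A_0 w_0\,\mathrm{d}y$. Combining this with $v^{\ast} = K^{\ast}(h_0-\nabla p_0)$ obtained above and the boundary trace $p_0 = p_\mathrm{b}$ inherited from the two-pressure Stokes model \eqref{TwoPressureStokesFixedCell} delivers Darcy's law \eqref{DarcyLawPDE}--\eqref{DarcyLawBC}. The only mildly technical point is the manipulation $A_0 e_0(w_i):\nabla_y w_j = J_0 e_0(w_i):e_0(w_j)$ underlying the equivalence of the two forms of $K^{\ast}_{ij}$; everything else reduces to linearity, uniqueness of the cell problem and the already established properties of $w_0$.
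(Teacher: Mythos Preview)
Your proposal is correct and follows essentially the same approach as the paper: the rewriting via \eqref{eq:Trafo:RechteSeiteStokes} and the appeal to linearity and uniqueness of the cell-level Stokes problem are exactly how the paper derives the corollary (the paper in fact only states that the result follows from uniqueness and linearity, without spelling out the details you provide). Your additional verification of the equivalence of the two expressions for $K^{\ast}_{ij}$ by testing with $w_j$ and symmetrising, and your reading of $\nabla\cdot v^{\ast}=-\partial_t\theta$ from \eqref{eq:Compactness:div_xw_0}, are precisely the standard computations the paper leaves implicit.
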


Next, we consider the transport equation and derive $\eqref{MacroEqu:VarTransport}$.
As a test function 
in the variational equation $\eqref{eq:weakForm:u_eps}$ we choose $\phi(t,x) = \phi_0(t,x) + \e \phi_1\left(t,x,\fxe\right)$ with $\phi_0 \in C_0^{\infty}(\Omega)$ and $\phi_1 \in C_0^{\infty}((0,T),C_0^{\infty}(\Omega, C_{\per}^{\infty}(Y)))$ to obtain after integration with respect to time:
\begin{align}
\begin{aligned}\label{AuxEquTransportLimitEps}
   \int_0^T &\left\langle \partial_t (\jeps \ueps) , \phi_0 + \e \phi_1 \left(t, \cdot_x ,\dfrac{\cdot_x}{\e}\right)\right\rangle_{H_{\partial \Omega}^1(\oe)', H^1_{\partial \Omega}(\oe)} \,\mathrm{d}t 
   \\
   &+ \int_0^T\int_{\oe} \left[D_{\e} \nabla \ueps - \ueps A_{\e} \weps \right] \cdot \left[ \nabla \phi_0 + \e \nabla_x \phi_1 \left(t,x,\fxe\right) + \nabla_y \phi_1\left(t,x,\fxe\right)\right] \,\mathrm{d}x \,\mathrm{d}t
   \\
   =& \int_0^T \int_{\oe} \jeps \feps \left[ \phi_0 + \e \phi_1\left(t,x,\fxe\right) \right] \,\mathrm{d}x \,\mathrm{d}t - \rho \int_0^T \int_{\geps} \jeps g\left(\ueps,\e^{-1} \reps\right) \left[ \phi_0 + \e \phi_1\left(t,x,\fxe\right) \right] \,\mathrm{d}\sigma \,\mathrm{d}t.
\end{aligned}
\end{align}
Using the convergence results from Proposition \ref{prop:Comapctness}, Lemma \ref{lem:LemConvData} and Corollary \ref{Cor:WeakConDtJepsueps}, we can pass to the limit $\e \to 0$. For more details on the treatment of the non-convective terms, we refer to \cite[Section 5.2]{GahnPop2023Mineral} and \cite[Theorem 17]{WIEDEMANN2023113168}, in particular for the boundary term.
    
Using integration by parts with respect to the microscopic variable $y$ and using $\nabla_y \cdot (A_0w_0 ) = 0$ (see Proposition \ref{prop:Comapctness}\ref{CompactnessWepsPeps}), we have 
\begin{align*}
    \int_0^T \int_{\oe } \ueps A_{\e}& \weps \cdot \left[ \nabla \phi_0 + \e \nabla_x \phi_1 \left(t,x,\fxe\right) + \nabla_y \phi_1\left(t,x,\fxe\right)\right] \,\mathrm{d}x \,\mathrm{d}t
    \\
    &\overset{\e\to 0}{\longrightarrow}  \int_0^T \int_{\Omega }\int_{Y^{\ast}} u_0(t,x) A_0(t,x,y)w_0(t,x,y) \cdot [\nabla \phi_0(t,x) + \nabla_y \phi_1(t,x,y)]\,\mathrm{d}y \,\mathrm{d}x \,\mathrm{d}t
    \\
    &= \int_0^T \int_{\Omega} u_0(t,x) v^{\ast}(t,x) \nabla \phi_0(t,x) \,\mathrm{d}x \,\mathrm{d}t  - \int_0^T \int_{Y^{\ast} } u_0(t,x) \underbrace{\nabla_y \cdot (A_0w_0)}_{=0} \phi_1(t,x,y) \,\mathrm{d}y \,\mathrm{d}x \,\mathrm{d}t
    \\
    &= \int_0^T \int_{\Omega} u_0 v^{\ast} \nabla \phi_0 \,\mathrm{d}x \,\mathrm{d}t.
\end{align*}
Hence, for $\e \to 0$, we obtain from $\eqref{AuxEquTransportLimitEps}$ the limit equation (neglecting the arguments of the functions, which should be clear from the context, for a simpler notation) 
\begin{align}
\begin{aligned}
\label{AuxEquationTransportMacro}
   \int_0^T \langle \partial_t & (\theta u_0) , \phi_0 \rangle_{H^{-1}(\Omega),H_0^1(\Omega)}  - \int_0^T \int_{\Omega} u_0 v^{\ast} \phi_0 \,\mathrm{d}x \,\mathrm{d}t
   \\
   &+ \int_0^T \int_{\Omega}\int_{Y^{\ast} } D_0\left[\nabla u_0 + \nabla_y u_1 \right] \cdot [\nabla \phi_0 + \nabla_y \phi_1]\,\mathrm{d}y \,\mathrm{d}x \,\mathrm{d}t 
   = \int_0^T \int_{\Omega } \theta f \phi_0 - \rho |\Gamma(t,x)| \partial_t R_0 \phi_0 \,\mathrm{d}x \,\mathrm{d}t,
\end{aligned}
\end{align}
with the porosity $\theta$ defined in $\eqref{Def:Porosity}$. In the last term, we can replace $\partial_t \theta = -|\Gamma(t,x)| \partial_t R_0$.
By density, this equation is valid for all $\phi_0 \in L^2((0,T),H_0^1(\Omega))$ and $\phi_1 \in L^2((0,T)\times \Omega , H_{\per}^1(Y^{\ast}))$. We emphasise that we also used the $L^\infty(0,T)\times \Omega)$ bound for $u_0$ here. By standard arguments from homogenisation theory, see for example \cite{Allaire_TwoScaleKonvergenz}, we obtain by choosing $\phi_0=0$ that $u_1$ admits a representation of the form
\begin{align*}
    u_1(t,x,y) = \sum_{i=1}^n \partial_{x_i} u_0(t,x) \chi_i(t,x,y)
\end{align*}
for almost every $(t,x,y) \in (0,T)\times \Omega \times Y^{\ast}$, where $\chi_i\in L^2((0,T)\times \Omega,H_{\per}^1(Y^{\ast})/\R)$ are the unique weak solutions of the cell problems
\begin{align}
\begin{aligned}
   -\nabla_y \cdot (D_0 (\nabla_y \chi_i + e_i )) &= 0 &\mbox{ in }& (0,T)\times \Omega \times Y^{\ast},
   \\
   -D_0(\nabla_y \chi_i +e_i ) \cdot \nu &= 0 &\mbox{ on }& (0,T)\times \Omega \times \Gamma,
   \\
   \chi_i \mbox{ is } Y\mbox{-periodic}, \, \int_{Y^{\ast}} & \chi_i \,\mathrm{d}y =0.
\end{aligned}
\end{align}
We define the homogenised diffusion coefficient $D^{\ast}\in L^{\infty}((0,T)\times \Omega)^{n\times n}$  for $i,j=1,\ldots,n$  and almost every $(t,x) \in (0,T)\times \Omega)$ by
\begin{align}\label{Def:HomDiffCoeff_fixed}
    D^{\ast}_{ij}(t,x):= \int_{Y^{\ast}} D_0(t,x) [\nabla_y \chi_i(t,x,y) + e_i ] \cdot [\nabla_y \chi_j(t,x,y) + e_j] \,\mathrm{d}y.
\end{align}
Note that that the right-hand side depends on the transformation $\psi_0$ and, therefore, on the radius $R_0$.

\subsection{Transformation of the cell problems back to the evolving domain}\label{subsec:BAcktrafoCellProblems}
Instead of the cell problems \eqref{CellProblemsFluidFixed2}, we can consider the cell problems in the evolving domain: find $(\tilde{w}_i, \tilde{\pi}_i) \in L^\infty((0,T) \times \Omega;H^1_{\per,0}(Y^*(t,x))) \times L^\infty((0,T) \times \Omega;L^2(Y^*(t,x))/\R)$ such that 
\begin{align}
\begin{aligned}
\label{eq:CellProblemStokes:MovingCell}
-\nabla_y \cdot (e (\tilde{w}_i) ) + \nabla_y \tilde{\pi}_i &= e_i &&\mbox{ in }& Y^{\ast}(t,x),
     \\
     \nabla_y \cdot \tilde{w}_i &= 0&&\mbox{ in }& Y^{\ast}(t,x),
     \\
    \tilde{w}_i, \, \tilde{\pi}_i \mbox{ are } Y\mbox{-periodic},   \,   \int_{Y^{\ast}(t,x)}& \tilde{\pi}(t,x,y)\,\mathrm{d}y = 0
    \end{aligned}
\end{align}
for almost every $(t,x) \in (0,T)\times \Omega$. (The Bochner spaces of vector-valued functions with range depending on the domain of definition can be defined via restriction, see for example \cite{meier2008note}.)
Using \eqref{eq:Trafo:RechteSeiteStokes}, it can be observed that $\tilde{w}_i(t,x,y) = w_i(t,x,\psi^{-1}_0(t,x,y))$, $\tilde{\pi}_i(t,x,y) = \pi_i(t,x,\psi^{-1}_0(t,x,y)) - (\psi^{-1}_0(t,x,y) -y)$ up to a constant depending on $t$ and $x$, where $(w_i, \pi_i)$ is the solution of \eqref{CellProblemsFluidFixed2}. 
Having this relation, the permeability tensor \eqref{Def:PermeabilityTensorFixed} can be transformed accordingly, which yields
\begin{align}\label{eq:K:moving_cell}
\begin{aligned}
K^*(t,x)_{ij} &=  \int\limits_{Y^*(t,x)}  e(\tilde{w}_i) : e(\tilde{w}_j) \,\mathrm{d}y = \int\limits_{Y^*(t,x)}  e_i \cdot \tilde{w}_j \,\mathrm{d}y,
\end{aligned}
\end{align}
where the second equality can be obtained with \eqref{eq:CellProblemStokes:MovingCell}.
With the periodicity and the divergence property, it can be shown that for $\varphi \in H^1_{\per,0}(Y^*)$ with $\div \varphi = 0$ it holds that
\begin{align*}
   \int\limits_{Y^*(t,x)} (\nabla \tilde{w}_i)^\top : \nabla \varphi \,\mathrm{d}y = 0.
\end{align*}
Thus, in \eqref{eq:CellProblemStokes:MovingCell}, the symmetric gradients $e_0(\tilde{w}_i)$ can be replaced by $\tfrac{1}{2} \nabla \tilde{w}_i$ and $K^*$ can be also expressed by 
\begin{align}
\begin{aligned}
K^*(t,x)_{ij} = \tfrac{1}{2}
\int\limits_{Y^*(t,x)}  \nabla \tilde{w}_i : \nabla \tilde{w}_j \,\mathrm{d}y.
\end{aligned}
\end{align}
By the same argumentation, the symmetric gradient can be replaced in \eqref{TwoPressureStokesFixedCell}, \eqref{CellProblemsFluidFixed2} and \eqref{Def:PermeabilityTensorFixed}.

Following \cite{AA23}, we can give a representation of $D^{\ast}$ independent of the transformation $\psi_0$ included in the diffusion coefficient $D_0$ as an integral over the evolving reference element $Y^{\ast}(t,x)$. More precisely, we have
\begin{align}\label{Def:HomDiffCoeff}
    D^{\ast}_{ij}(t,x) = \int_{Y^{\ast}(t,x)} D [\tilde{\chi}_i(t,x,y) + e_i ]\cdot [\tilde{\chi}_j(t,x,y) + e_j] \,\mathrm{d}y ,
\end{align}
where $\tilde{\chi}_i \in L^\infty((0,T)\times \Omega,H_{\per}^1(Y^{\ast}(t,x))/\R)$  are the solutions of the following cell problems on the evolving reference element $Y^{\ast}(t,x)$ for almost every $(t,x) \in (0,T)\times \Omega$:
\begin{align}
\begin{aligned}
   -\nabla_y \cdot (D (\nabla_y \tilde{\chi}_i + e_i)) &= 0 &\mbox{ in }& Y^{\ast}(t,x),
   \\
   -D(\nabla_y \tilde{\chi}_i + e_i )\cdot \nu &= 0 &\mbox{ on }& \Gamma(t,x),
   \\
   \tilde{\chi}_i \mbox{ is } Y\mbox{-periodic}, \, \int_{Y^{\ast}} & \tilde{\chi}_i(t,x,y) \,\mathrm{d}y = 0.
\end{aligned}
\end{align}
Between $\chi_i$ and $\tilde{\chi}_i$, the following relation holds for almost every $(t,x,y) \in (0,T)\times \Omega \times Y^{\ast}$ (up to a constant depending on $t$ and $x$): 
\begin{align*}
    \tilde{\chi}_i\left(t,x,\psi_0(t,x,y)\right) = \chi_i(t,x,y) + \left(y - \psi_0(t,x,y)\right) \cdot e_i.
\end{align*}
Now, we proceed with the macroscopic equation. Choosing $\phi_1 = 0$ in $\eqref{AuxEquationTransportMacro}$ and using the definition of $D^{\ast}$, we obtain by a standard calculation
\begin{align}
\begin{aligned}\label{AuxEquTransportMacroII}
     \int_0^T \langle \partial_t & (\theta u_0) , \phi_0 \rangle_{H^{-1}(\Omega),H_0^1(\Omega)}  - \int_0^T \int_{\Omega} u_0 v^{\ast} \phi_0 \,\mathrm{d}x \,\mathrm{d}t
   \\
   &+ \int_0^T \int_{\Omega}D^{\ast}\nabla u_0 \cdot \nabla \phi_0 \,\mathrm{d}x \,\mathrm{d}t 
   = \int_0^T \int_{\Omega } \theta f \phi_0 + \rho \partial_t \theta \phi_0 \,\mathrm{d}x \,\mathrm{d}t
\end{aligned}
\end{align}
for all $\phi_0 \in L^2((0,T), H_0^1(\Omega))$,
which is precisely the variational equation for $u_0$ in the weak formulation of the macroscopic model in Definition \ref{Def:Weak_macro}.  It remains to establish the initial condition $u_0(0) = u^\mathrm{in}$, see also Remark \ref{rem:RegularityU0}, where it suffices to show $(\theta u_0)(0) = \theta^\mathrm{in} u^\mathrm{in}$. This follows  by an integration by parts with respect to time in $\eqref{AuxEquTransportLimitEps}$, choosing $\phi_1=0$ and passing to the limit $\e \to 0$ and comparing the resulting equation with $\eqref{AuxEquTransportMacroII}$. For more details, we refer again to \cite[Section 5.2]{GahnPop2023Mineral}. Note that the homogenised equations in \cite{GahnPop2023Mineral} contain additional terms on the right- and left-hand side which cancel each other. In fact, the origins of these terms in the $\e$-scaled model already cancel each other, see also \cite{WIEDEMANN2023113168}, and they do not appear in the homogenisation process and the $\e$-scaled weak form here. Consequently, the homogenised equations of \cite{GahnPop2023Mineral} and \cite{WIEDEMANN2023113168}, who did not consider advection, coincide, whereas \eqref{AuxEquTransportMacroII} contains the additional advective transport term.
This completes the proof of Theorem \ref{MainTheoremMacroModel}.

\section*{Acknowledgement}
The research of I.~S.~Pop was supported by the Research Foundation-Flanders (FWO),
Belgium, through the Odysseus programme (project G0G1316N) and the German Research Foundation (DFG) through the SFB 1313 (project 327154368). 

\begin{appendix}

\section{Auxiliary results}\label{sec:AppendixA}

In this section, we summarise some technical results important for the treatment of our problem. We start with a two-pressure decomposition:
\begin{lemma}\label{Lem:TwoPressureDecomp}
Let the space $\mathcal{V}$ be defined as in $\eqref{SpaceV}$. We denote its annihilator in $L^2(\Omega, H_{\per,0}^1(Y^{\ast})^n)^{\prime} $ by $\mathcal{V}^{\perp}$. Then, it holds that
\begin{align*}
    \mathcal{V}^{\perp} = \big\{\nabla_x p_0 + \nabla_y p_1 \in L^2(\Omega, H_{\per,0}^1(Y^{\ast})^n)^{\prime}  \, :\, p_0 \in  H^1_0(\Omega), \, p_1 \in L^2(\Omega \times Y^{\ast})/\R \big\}. 
\end{align*}
\end{lemma}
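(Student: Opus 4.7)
The approach will be a two-scale De Rham / two-pressure argument in the spirit of Allaire \cite{Hornung1997}: first prove the easy inclusion by integration by parts, then obtain the macroscopic pressure $p_0$ by analysing $L \in \mathcal{V}^\perp$ on the intermediate space
$\mathcal{V}_0 := \{\eta \in L^2(\Omega; H^1_{\per,0}(Y^{\ast})^n) : \nabla_y \cdot \eta = 0\}$, and finally recover the microscopic pressure $p_1$ by a parametrised De Rham lemma in $y$ applied to $L - \nabla_x p_0$.

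For the inclusion ``$\supseteq$'', I will just compute $\langle \nabla_x p_0 + \nabla_y p_1, \eta \rangle$ for $\eta \in \mathcal{V}$: the $\nabla_x p_0$-term integrates by parts against $\nabla_x \cdot \int_{Y^{\ast}} \eta = 0$, with the boundary contribution vanishing because $p_0 \in H^1_0(\Omega)$; the $\nabla_y p_1$-term integrates by parts in $y$, with the boundary contributions vanishing by $Y$-periodicity and the Dirichlet condition of $\eta$ on $\Gamma$.

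For ``$\subseteq$'', fix $L \in \mathcal{V}^\perp$. The key observation is that on $\mathcal{V}_0$ the value $L(\eta)$ depends only on the macroscopic average $v(x) := \int_{Y^{\ast}} \eta(x,y)\, dy$: if $\eta_1,\eta_2 \in \mathcal{V}_0$ share the same average, then $\eta_1 - \eta_2 \in \mathcal{V}$ and hence $L(\eta_1) = L(\eta_2)$. To promote this to a continuous linear functional $\tilde L$ on $L^2(\Omega)^n$, I will construct a bounded right-inverse of $\eta \mapsto \int_{Y^{\ast}} \eta$ from $L^2(\Omega)^n$ into $\mathcal{V}_0$ by solving a constant-coefficient Stokes cell problem on $Y^{\ast}$ and exploiting positive-definiteness of the associated purely geometric permeability matrix to realise an arbitrary constant macroscopic flux, and then acting pointwise in $x$. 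The condition $L \in \mathcal{V}^\perp$ then translates into $\tilde L(v) = 0$ for every $v \in L^2(\Omega)^n$ with $\nabla_x \cdot v = 0$ in $\mathcal{D}'(\Omega)$ (no boundary condition on $v \cdot \nu|_{\partial \Omega}$). Representing $\tilde L$ via Riesz as $G \in L^2(\Omega)^n$ and invoking the classical De Rham lemma on $\Omega$ yields $G = \nabla_x p_0$ with $p_0 \in H^1(\Omega)$; since $v \cdot \nu|_{\partial\Omega}$ ranges over a dense subset of mean-zero $H^{-1/2}(\partial\Omega)$, Green's formula then forces $p_0|_{\partial \Omega} = 0$, i.e.\ $p_0 \in H^1_0(\Omega)$.

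Finally, $L_1 := L - \nabla_x p_0$ is still in $\mathcal{V}^\perp$ and now vanishes on all of $\mathcal{V}_0$ by construction. A parametrised De Rham lemma on $Y^{\ast}$ with periodic/Dirichlet conditions, realised via a fixed Bogovskii-type right-inverse of $\nabla_y \cdot$ on $Y^{\ast}$ so that $p_1$ depends linearly and continuously on $L_1$ with measurable $L^2$-valued $x$-dependence, yields $p_1 \in L^2(\Omega \times Y^{\ast})/\mathbb{R}$ with $L_1 = \nabla_y p_1$, completing the decomposition. The main obstacle will be the construction of the bounded right-inverse of $\eta \mapsto \int_{Y^{\ast}} \eta$ on $\mathcal{V}_0$ (which is precisely what distinguishes the two-scale result from its one-scale counterpart) together with the measurable $x$-dependence of $p_1$ in the last step; everything downstream is then essentially bookkeeping.
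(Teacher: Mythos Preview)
Your proposal is correct and follows precisely the standard two-pressure/De Rham strategy of Allaire in \cite{Hornung1997}; the paper itself does not give a proof of this lemma but only refers to \cite[Lemma~1.5]{Hornung1997}, \cite[Section~14]{chechkin2007homogenization} and, for a detailed version, \cite[Lemma~5.3 and Remark~5.5]{fabricius2023homogenization}. Your sketch therefore supplies the argument that the paper defers to the literature, and in more detail than any of those references spell out for this specific setting.
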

\begin{proof}
Similar results can be found in \cite[Lemma 1.5]{Hornung1997} and \cite[Section 14]{chechkin2007homogenization}. For a detailed proof (for thin domains, which can be easily transferred to our situation), we refer to \cite[Lemma 5.3 and Remark 5.5]{fabricius2023homogenization}.
\end{proof}

Next, we show that, in the case of symmetrical inclusions in the perforated reference cell $Y^{\ast}$ (in our case simply balls), we get a simplified structure for the  homogenised diffusion coefficient and permeability tensor. More precisely,
in the special situation when the perforations are balls, we get that the homogenised diffusion coefficient $D^{\ast}$ and the permeability tensor $K^{\ast}$ in Section \ref{SectionMainResults} are scalars, which we formulate in the following Lemma for the permeability tensor (the proof follows similar lines for the homogenised diffusion coefficient). Here, we consider $Y^{\ast} := \left(-\frac12,\frac12\right)^n \setminus B_r(0)$ with $r \in \left(0,\frac12\right)$, and define the permeability tensor $K^{\ast} \in \R^{n\times n}$ for $i,j=1,\ldots n$ via 
\begin{align}
    K_{ij} := \int_{Y^{\ast}} w_j \cdot e_i \,\mathrm{d}y,
\end{align}
where $(w_j,\pi_j)$ solve
\begin{align}
\begin{aligned}\label{CellProblemK_diagonal}
    -\nabla_y \cdot (\nabla_y w_j) + \nabla_y \pi_j &= e_j &\mbox{ in }& Y^{\ast},
    \\
    \nabla_y \cdot w_j &= 0 &\mbox{ in }& Y^{\ast},
    \\
    w_j &= 0 &\mbox{ on }& \Gamma,
    \\
    w_j \mbox{ is } Y\mbox{-periodic}.
 \end{aligned}  
\end{align}

\begin{lemma}\label{Lem:Scalar_hom_coeff}
The permeability tensor $K^{\ast}$ fulfills $K_{ij}= 0$ for $i\neq j$ and $K_{ii}= K_{jj}$ for all $i,j=1,\ldots, n$. In other words, there exists $k>0$  such that $K = kE_n$.  
\end{lemma}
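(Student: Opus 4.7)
The plan is to exploit the symmetries of the reference cell $Y^* = (-\tfrac12,\tfrac12)^n \setminus B_r(0)$: it is invariant under every coordinate reflection $y_i \mapsto -y_i$ and under every transposition of two coordinates $y_i \leftrightarrow y_j$. Combined with the uniqueness of the velocity $w_j$ in \eqref{CellProblemK_diagonal} (and the pressure up to an additive constant), these symmetries transfer directly to the solutions and then, via change of variables, to the entries $K_{ij}$.

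First, I would show that $K_{ij}=0$ for $i\ne j$. Fix such a pair and let $S_i = I - 2 e_i e_i^\top$ be the reflection in the $i$-th coordinate, so $S_i(Y^*) = Y^*$. For $(w_j,\pi_j)$ the solution of \eqref{CellProblemK_diagonal}, define $\widetilde w(y) := S_i w_j(S_i y)$ and $\widetilde\pi(y):=\pi_j(S_i y)$. A direct computation (using $S_i^2=I$ and $S_i^\top = S_i$) shows that the gradient, the Laplacian and the divergence transform covariantly, so that
\begin{align*}
-\Delta \widetilde w + \nabla \widetilde\pi = S_i e_j = e_j, \qquad \nabla\cdot \widetilde w = 0 \text{ in } Y^*,
\end{align*}
the Dirichlet condition on $\Gamma$ and $Y$-periodicity are preserved, and the forcing is unchanged since $S_i e_j = e_j$ when $j\ne i$. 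By uniqueness of the Stokes solution (with the usual normalisation on the pressure), $\widetilde w = w_j$, i.e.\ $(w_j)_i(y) = -(w_j)_i(S_i y)$. Hence $(w_j)_i$ is odd under $y_i \mapsto -y_i$ on the symmetric domain $Y^*$, so
\begin{align*}
K_{ij} = \int_{Y^*} (w_j)_i \,\mathrm{d}y = 0.
\end{align*}

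Next, for $K_{ii}=K_{jj}$, I use the transposition $P_{ij}$ swapping coordinates $i$ and $j$, which satisfies $P_{ij}^2=I$, $P_{ij}^\top=P_{ij}$ and $P_{ij}(Y^*)=Y^*$. Setting $\widetilde w(y):=P_{ij} w_j(P_{ij} y)$ and $\widetilde\pi(y):=\pi_j(P_{ij} y)$, the same covariance argument shows that $(\widetilde w,\widetilde\pi)$ solves \eqref{CellProblemK_diagonal} with right-hand side $P_{ij} e_j = e_i$. By uniqueness, $\widetilde w = w_i$, so $(w_i)_i(y) = (w_j)_j(P_{ij} y)$. Using the change of variables $y \mapsto P_{ij} y$ together with $|\det P_{ij}|=1$ and $P_{ij}(Y^*)=Y^*$,
\begin{align*}
K_{ii} = \int_{Y^*} (w_i)_i \,\mathrm{d}y = \int_{Y^*} (w_j)_j(P_{ij} y) \,\mathrm{d}y = \int_{Y^*}(w_j)_j\,\mathrm{d}y = K_{jj}.
\end{align*}
Combining the two steps yields $K^* = k E_n$ with $k := K_{11}$; positivity of $k$ follows from the coercivity and symmetry of $K^*$ already established in the derivation. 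No step is really an obstacle here: the only point to be careful about is that the transformation laws $\Delta(S u\circ S) = S(\Delta u)\circ S$, $\nabla(p\circ S) = S(\nabla p)\circ S$ and $\nabla\cdot(S u\circ S) = (\nabla\cdot u)\circ S$ hold for any orthogonal $S$, which makes both symmetry arguments completely parallel.
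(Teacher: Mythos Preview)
Your proof is correct and follows essentially the same approach as the paper: both exploit the invariance of $Y^*$ under coordinate reflections and transpositions, combine this with the uniqueness of the Stokes cell solutions, and read off the vanishing/equality of the entries of $K^*$ via a change of variables. The only cosmetic difference is that for $K_{ij}=0$ you reflect in the $i$-th coordinate (so the transformed solution satisfies the \emph{same} problem and uniqueness gives $(w_j)_i$ odd), whereas the paper reflects in the $j$-th coordinate (so the transformed solution has right-hand side $-e_j$ and uniqueness gives $\tilde w_j=-w_j$); both routes are equivalent.
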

\begin{proof}
Let $R \in \R^{n \times n}$ be an orthonormal matrix. More precisely, we assume $R = R^{\top} = R^{-1}$. Later, we will choose  $R$ as a reflection at a hyperplane through the origin.
We define $\tw_j(y) := R w_j (Ry)$ and $\tilde{\pi}_j(y):= \pi_j(Ry)$. Hence, an elementary calculation shows that $(\tw_j , \tilde{\pi}_j)$  solves $\eqref{CellProblemK_diagonal}$ with $e_j$ replaced by $Re_j$.

Now, for fix $j\in \{1,\ldots,n\}$, we choose $R$ with $Re_j = -e_j $ and $R e_i = e_i$ for $i\neq j$, \ie~$Ry = (y_1,\ldots,y_{j-1} -y_j y_{j+1},\ldots,y_n)$. Then, $(-\tw_j,-\tilde{\pi}_j)$ is also a solution of $\eqref{CellProblemK_diagonal}$ and, therefore, $\tw_j = - w_j$. We get after transformation and using the definition of $\tw_j$ (using $|\det(R) |= 1$ and $R^{\top} = R=R^{-1}$):
\begin{align*}
    K_{ij} = \int_{Y^{\ast}} w_j \cdot e_i \,\mathrm{d}y = \int_{Y^{\ast}} R \tw_j \cdot e_i \,\mathrm{d}y =- \int_{Y^{\ast}} w_j \cdot R e_i \,\mathrm{d}y.
\end{align*}
This implies that $K$ is a diagonal matrix. 
In order to identify the diagonal elements $K_{ii}= K_{jj}$ for $ i \neq j$, we choose $R e_j = e_i$, $R e_i = e_j$ and $Re_l = e_l$  for $l \in \{1, \dots, n \} \setminus \{i,j\}$.
The same argumentation as above yields $\tilde{w}_j$ is a solution of \eqref{CellProblemK_diagonal} with right-hand side $e_i$. Therefore, we obtain $\tilde{w}_j = w_i$, which yields
\begin{align*}
K_{jj} = \int_{Y^{\ast}} w_j \cdot e_j \,\mathrm{d}y = \int_{Y^{\ast}} R \tw_j \cdot e_j \,\mathrm{d}y = \int_{Y^{\ast}} w_i \cdot R e_j dy = \int_{Y^{\ast}} w_i \cdot e_i \,\mathrm{d}y = K_{ii}.
\end{align*}
Hence, we get the desired result with $k:= K_{ii}$ (with $i\in \{1,\ldots,n\}$ arbitrary).
\end{proof}

We have the following well-known trace inequality for heterogeneous domains $\Oe$, which can be obtained by a standard decomposition argument.
\begin{lemma}\label{Lem:TraceInequality}
Let $p \in [1,\infty)$. For every $\theta>0$, there exists a constant $C(\theta)>0$ independent of $\e$ such that 
\begin{align*}
   \e^{\frac{1}{p}} \|\ueps\|_{L^p(\geps)}  \le C(\theta)\|\ueps\|_{L^p(\Oe)} + \theta \e \|\nabla \ueps\|_{L^p(\Oe)}
\end{align*}
holds for all $\ueps \in W^{1,p}(\Oe)$.
\end{lemma}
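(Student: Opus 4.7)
The plan is the classical two-step reduction to the fixed reference perforation cell, followed by summing over the $\e$-cells of the periodic lattice.

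Step one is to establish the trace inequality on the reference cell: for every $\tilde\theta>0$ there exists $\tilde C(\tilde\theta)>0$ such that
\[
\|v\|_{L^p(\Gamma)} \leq \tilde C(\tilde\theta)\|v\|_{L^p(Y^*)} + \tilde\theta\,\|\nabla v\|_{L^p(Y^*)}\qquad\forall v\in W^{1,p}(Y^*).
\]
This is a standard Ehrling-type inequality. I would prove it by contradiction: assume there is $\tilde\theta_0>0$ and a sequence $v_n\in W^{1,p}(Y^*)$ with $\|v_n\|_{L^p(\Gamma)}=1$, $\|v_n\|_{L^p(Y^*)}<1/n$ and $\|\nabla v_n\|_{L^p(Y^*)}<1/\tilde\theta_0$. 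Then $v_n$ is bounded in $W^{1,p}(Y^*)$ and, by the compactness of the trace operator $W^{1,p}(Y^*)\hookrightarrow L^p(\Gamma)$ (continuous trace into $W^{1-1/p,p}(\Gamma)$ composed with the compact embedding into $L^p(\Gamma)$), one extracts a subsequence with $v_n\to 0$ in both $L^p(Y^*)$ and $L^p(\Gamma)$, contradicting $\|v_n\|_{L^p(\Gamma)}=1$.

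Step two is the $\e$-scaling on a single cell. For each $k\in K_{\e}$ and $\ueps\in W^{1,p}(\e(k+Y^*))$, set $v(y):=\ueps(\e(k+y))$ on $Y^*$. The change of variables gives
\[
\|v\|_{L^p(Y^*)}=\e^{-n/p}\|\ueps\|_{L^p(\e(k+Y^*))},\quad
\|\nabla v\|_{L^p(Y^*)}=\e^{1-n/p}\|\nabla\ueps\|_{L^p(\e(k+Y^*))},
\]
\[
\|v\|_{L^p(\Gamma)}=\e^{(1-n)/p}\|\ueps\|_{L^p(\e(k+\Gamma))}.
\]
Inserting these identities into Step one and multiplying by $\e^{n/p}$ yields
\[
\e^{1/p}\|\ueps\|_{L^p(\e(k+\Gamma))}\leq \tilde C(\tilde\theta)\,\|\ueps\|_{L^p(\e(k+Y^*))} + \tilde\theta\,\e\,\|\nabla\ueps\|_{L^p(\e(k+Y^*))}.
\]

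Step three assembles the cell-wise estimates. Raising to the $p$-th power via $(a+b)^p\le 2^{p-1}(a^p+b^p)$, summing over $k\in K_{\e}$ (noting $\Ge$ is the disjoint union of the $\e(k+\Gamma)$ and that the unperforated strip $\Oe\setminus\bigcup_k \e(k+Y^*)$ contributes nothing to $\Ge$), and taking the $p$-th root together with $(\alpha+\beta)^{1/p}\le\alpha^{1/p}+\beta^{1/p}$, one arrives at
\[
\e^{1/p}\|\ueps\|_{L^p(\Ge)}\le 2^{(p-1)/p}\tilde C(\tilde\theta)\,\|\ueps\|_{L^p(\Oe)} + 2^{(p-1)/p}\tilde\theta\,\e\,\|\nabla\ueps\|_{L^p(\Oe)}.
\]
Setting $\theta:=2^{(p-1)/p}\tilde\theta$ and $C(\theta):=2^{(p-1)/p}\tilde C(2^{-(p-1)/p}\theta)$ yields the claim; crucially, $\tilde C$ is $\e$-independent because the cell $Y^*$ is fixed. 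There is no conceptual obstacle: the argument rests entirely on the compactness of the trace operator on the fixed cell (used once, in Step one) and careful bookkeeping of the scaling exponents $\e^n$, $\e^{n-1}$, $\e^p$ produced by the change of variables.
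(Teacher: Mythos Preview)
Your proposal is correct and is precisely the ``standard decomposition argument'' the paper invokes without spelling out: pass to the fixed reference cell $Y^*$, use an Ehrling-type trace inequality there, rescale to each $\e$-cell, and sum. One very minor remark: in this paper's setting $\Omega$ is an integer rectangle with $\e^{-1}\in\N$, so the ``unperforated strip'' you mention is actually empty, and for $p=1$ the contradiction argument in Step~one needs a word more of care since $W^{1,1}$ is not reflexive (though the inequality is of course still true and can be obtained directly via the fundamental theorem of calculus on a tubular neighbourhood of $\Gamma$).
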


Finally, we show that the solution operator for a saddle-point problem is locally Lipschitz continuous. We formulate this result for an abstract saddle-point problem.
\begin{lemma}[Lipschitz continuity of saddle-point problems]\label{lem:SaddlePointLpschitzContinuity}
Let $V,Q$ be Banach spaces and
\begin{align}\label{eq:Def:Space_V_SaddlePoint}
&A \coloneqq \{ a \in \Bil(V,V) \mid \norm{a}{\Bil}< \infty \textrm{ and }  a(v,v) \geq  \alpha \norm{v}{V}^2  \textrm{ for an } \alpha >0 \textrm{ and all } v \in V \},
\\\label{eq:Def:Space_Q_SaddlePoint}
&B \coloneqq \{ b \in \Bil(Q,V) \mid \norm{b}{\Bil}< \infty \textrm{ and } \inf\limits_{q \in Q} \sup\limits_{v \in V} b(q,v) \geq \beta \textrm{ for a } \beta > 0\},  
\end{align}
where for $a \in A$ and $b \in B$
\begin{align}\label{eq:Def:Norm_SaddlePoint}
&\norm{a}{\Bil} \coloneqq\sup\limits_{v,w \in V\setminus \{0 \} } \frac{|a(v,w)|}{\norm{v}{V} \norm{w}{V}} ,
\\
&\norm{b}{\Bil} \coloneqq\sup\limits_{v \in V\setminus \{0 \}, q \in Q\setminus \{0 \} } \frac{|b(v,q)|}{\norm{v}{V} \norm{q}{Q}} .
\end{align}
Let $\mathcal{L} : A  \times B \times V' \times Q' \to  V \times Q$ be the solution operator of the following saddle-point problem with $\mathcal{L}(a,b,f,g) \coloneqq (v,p)$, where $(v,p) \in V \times Q$ is the unique solution of 
\begin{align}\label{eq:SaddlePointProb:1}
a(v, \cdot ) + b(q, \cdot) &= f  && \textrm{ in } V',
\\\label{eq:SaddlePointProb:2}
b(\cdot, v) &= g  && \textrm{ in } Q'.
\end{align}
Let $\alpha_0, \beta_0, C>0$ and
\begin{align*}
&A^{\alpha_0}_H \coloneqq \{ a \in A \mid \norm{a}{\Bil} \leq H \textrm{ and } a(v,v) \geq  \alpha_0 \norm{v}{V}^2  \textrm{ for all } v \in V \},
\\
&B^{\beta_0} \coloneqq \{b \in B \mid  \inf\limits_{q \in Q} \sup\limits_{v \in V} b(q,v) \geq \beta_0 \},
\\
&V'_M \coloneqq \{v' \in V' \mid \norm{v'}{V'} \leq M\},
\\
&Q'_M \coloneqq \{q' \in Q' \mid \norm{q'}{Q'} \leq M\}.
\end{align*}
Then, 
$\mathcal{L}$ is Lipschitz continuous on $A^{\alpha_0}_H \times B^{\beta_0} \times V'_M \times Q'_M$ and the Lipschitz constant depends only on $\alpha_0, \beta_0, C$, i.e.~there exists $L(\alpha_0, \beta_0, H)$ such that
\begin{align*}
\norm{\mathcal{L}(a_1,b_1,f_1,g_1)-\mathcal{L}(a_2,b_2,f_2,g_2)}{V \times Q} 
\\
\leq
L(\alpha_0, \beta_0, H) \left( M\left(\norm{a_1 -a_2}{\Bil} + \norm{b_1 -b_2}{\Bil}\right) +
\left(\norm{f_1 -f_2}{V'} +  \norm{g_1 -g_2}{Q'} \right) \right).
\end{align*}
Since the coercivity constant and the inf--sup constant depend continuously on the bilinear form, $L$ is locally Lipschitz continuous on $A  \times B \times V' \times Q'$ in particular.
\end{lemma}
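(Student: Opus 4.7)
The plan is to reduce the statement to the classical Brezzi stability estimate for mixed saddle-point problems by observing that the difference of two solutions is itself a saddle-point problem whose data collect the perturbations. First, I would verify that under the assumptions defining $A^{\alpha_0}_H$ and $B^{\beta_0}$, the Brezzi theorem provides existence, uniqueness and the a~priori bound
\begin{align*}
\norm{v}{V} + \norm{q}{Q} \leq C(\alpha_0,\beta_0,H)\bigl(\norm{f}{V'} + \norm{g}{Q'}\bigr)
\end{align*}
for the solution $(v,q)$ of \eqref{eq:SaddlePointProb:1}--\eqref{eq:SaddlePointProb:2}, with a constant depending only on $\alpha_0,\beta_0$ and the bound $H$ on $\norm{a}{\Bil}$ (this dependence enters via the standard Brezzi formula, e.g.\ $C \sim \frac{1}{\beta_0}(1+H/\alpha_0)\cdot \frac{1}{\alpha_0}$, etc.).

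Next, set $(v_i,q_i):=\mathcal{L}(a_i,b_i,f_i,g_i)$ for $i=1,2$ and write $\delta v \coloneqq v_1 - v_2$, $\delta q \coloneqq q_1 - q_2$. Using bilinearity, subtracting the two systems gives, in $V'$,
\begin{align*}
a_1(\delta v,\cdot) + b_1(\delta q,\cdot) = (f_1-f_2) - (a_1-a_2)(v_2,\cdot) - (b_1-b_2)(q_2,\cdot),
\end{align*}
and, in $Q'$,
\begin{align*}
b_1(\cdot,\delta v) = (g_1-g_2) - (b_1-b_2)(\cdot,v_2).
\end{align*}
Hence $(\delta v,\delta q) = \mathcal{L}(a_1,b_1,\widetilde f,\widetilde g)$, where $\widetilde f,\widetilde g$ denote the right-hand sides above. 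Since $(a_1,b_1)\in A^{\alpha_0}_H\times B^{\beta_0}$, applying the stability bound of the first step yields
\begin{align*}
\norm{\delta v}{V}+\norm{\delta q}{Q} \leq C(\alpha_0,\beta_0,H)\bigl(&\norm{f_1-f_2}{V'}+\norm{g_1-g_2}{Q'}\\
&+(\norm{v_2}{V}+\norm{q_2}{Q})(\norm{a_1-a_2}{\Bil}+\norm{b_1-b_2}{\Bil})\bigr).
\end{align*}

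Finally, invoking the same a~priori bound on $(v_2,q_2)$ and using $f_2\in V'_M$, $g_2\in Q'_M$ produces $\norm{v_2}{V}+\norm{q_2}{Q}\leq 2C(\alpha_0,\beta_0,H)\,M$. Substituting, one obtains the desired Lipschitz estimate with $L(\alpha_0,\beta_0,H)$ depending only on $\alpha_0,\beta_0,H$. The local Lipschitz statement on all of $A\times B\times V'\times Q'$ then follows by a standard neighbourhood argument: the coercivity constant and the inf--sup constant are continuous (in fact lower semicontinuous) functions of $a$ and $b$, and $\norm{a}{\Bil}, \norm{f}{V'}, \norm{g}{Q'}$ are continuous, so any point has a neighbourhood on which uniform constants $\alpha_0,\beta_0,H,M$ can be chosen.

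The routine part is the algebraic rewriting of the perturbed system; the only delicate point, and the step I would be most careful with, is verifying that the Brezzi constant depends only on $(\alpha_0,\beta_0,H)$ and not on any finer features of $a_1,b_1$. This can be done by writing out the standard Brezzi proof (decomposition $v=v_0+v^\perp$ with $v_0\in\ker b_1$ using inf--sup to pick $v^\perp$ controlled by $\tfrac{1}{\beta_0}\norm{g}{Q'}$, then coercivity on the kernel to bound $v_0$, and finally recovering $q$ by a second inf--sup argument) and observing that at each stage only $\alpha_0,\beta_0$ and the upper bound $H$ on $\norm{a_1}{\Bil}$ appear in the constants.
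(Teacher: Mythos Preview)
Your proposal is correct and is actually cleaner than the paper's own argument. Both proofs rest on the same two ingredients --- the Brezzi stability bound (with constant depending only on $\alpha_0,\beta_0,H$, not on $\norm{b}{\Bil}$) for the a~priori estimates on $(v_2,q_2)$, and an analysis of the difference $(\delta v,\delta q)$ --- but they organise the difference step differently. You recognise that $(\delta v,\delta q)$ is itself the exact solution of a saddle-point problem governed by $(a_1,b_1)$ with right-hand sides $\widetilde f,\widetilde g$, and then simply apply Brezzi stability once more; this reduces the proof to two invocations of the same black-box estimate plus a norm computation on $\widetilde f,\widetilde g$. The paper instead unrolls the Brezzi argument by hand for the difference: it applies coercivity of $a_1$ directly to $a_1(\delta v,\delta v)$, estimates $\norm{\delta p}{Q}$ via the inf--sup condition for $b_1$, and closes the resulting coupled inequalities with Young's inequality. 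Your route is shorter and more transparent; the paper's route is more self-contained in that it does not invoke Brezzi's theorem as an external result for the difference system. The careful point you flag --- that the stability constant does not depend on $\norm{b}{\Bil}$ --- is exactly what makes the argument go through without a bound on $\norm{b}{\Bil}$ in the definition of $B^{\beta_0}$, and the paper's explicit constants confirm this.
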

\begin{proof}
For $i \in \{1,2\}$, let $(a_i,b_i,f_i,g_i)  \in A^{\alpha_0}_H \times B^{\beta_0} \times V'_M \times Q'_M$ and let $(v_i,p_i) \in V \times Q$ be the unique solution of the corresponding saddle-point problem \eqref{eq:SaddlePointProb:1}--\eqref{eq:SaddlePointProb:2}, i.e.~
\begin{align}\label{eq:SaddlePointProb:i:1}
a_i(v_i, \cdot ) + b_i(p_i, \cdot) &= f_i  && \textrm{ in } V',
\\\label{eq:SaddlePointProb:i:2}
b_i(\cdot, v_i) &= g_i  && \textrm{ in } Q'.
\end{align}
By applying the coercivity of $a_1$ and the linearity of the bilinear forms and the right-hand sides, we obtain 
\begin{align*}
\alpha_0\norm{\delta v}{V}^2 \leq& a_1(\delta v, \delta v) 
\\
=&
-\delta b(p_2,\delta v) - \delta g(\delta p) + \delta b(\delta p, v_2) + \delta f(\delta v) -\delta a(v_2,\delta v)
\\
\leq& \norm{\delta b}{\Bil} \norm{p_2}{Q} \norm{\delta v}{V}  + \norm{\delta g}{Q'} \norm{\delta p}{P} 
+ \norm{\delta b}{\Bil} \norm{\delta p}{P}\norm{v_2}{V}  
\\
&+ \norm{\delta f}{V'} \norm{\delta v}{V} + \norm{\delta a}{\Bil} \norm{v_2}{V} \norm{\delta v}{V}.
\end{align*}
We note that standard existence theory provides the following bounds for the solution $(v_i,p_i)$ of \eqref{eq:SaddlePointProb:i:1}--\eqref{eq:SaddlePointProb:i:2}
\begin{align}
\norm{v_i}{V} \leq \frac{1}{\alpha_0} \norm{f_i}{V'} + \frac{2\norm{a_i}{\Bil}}{\alpha_0 \beta_0} \norm{g_i}{Q'} \leq CM,
\\
\norm{p_i}{Q} \leq \frac{2 \norm{a_i}{\Bil}}{\alpha_0 \beta_0} \norm{f_i}{V'} + \frac{2 \norm{a_i}{\Bil}^2}{\alpha_0 \beta_0^2} \norm{g_i}{Q'} \leq CM,
\end{align}
where $C$ depends on $H$ but not on $M$.
Then, the boundedness of the bilinear forms and the right-hand sides yields the boundedness of the solutions and we obtain
\begin{align}\notag
\alpha_0\norm{\delta v}{V}^2 
\leq C M \Big(\norm{\delta b}{\Bil} \norm{\delta v}{V}  + \norm{\delta b}{\Bil} \norm{\delta p}{Q}  +\norm{\delta a}{\Bil} \norm{\delta v}{V}  \Big)
\\\label{eq:002}
+ C \Big( \norm{\delta g}{Q'} \norm{\delta p}{Q}   +\norm{\delta f}{V'} \norm{\delta v}{V}  \Big).
\end{align}
Moreover, employing the inf--sup estimate for $b_1$ yields 
\begin{align*}
\beta \norm{\delta p}{Q} \leq \norm{b_1( \cdot, \delta p)}{V'}
=
\norm{-a_1(\delta v,\cdot) - (\delta a)(v_2,\cdot) + \delta f - \delta b(\cdot,p_2)}{V'}
\\\leq
\norm{a_1}{\Bil} \norm{\delta v}{V} + \norm{\delta a}{\Bil}\norm{v_2}{V} + \norm{\delta f}{V'} + \norm{\delta b}{\Bil}\norm{p_2}{Q}.
\end{align*}
Using the boundedness of the data and of the solutions, we obtain
\begin{align}\label{eq:003}
\beta_0 \norm{\delta p}{Q} \leq C
\left(\norm{\delta v}{V} + \norm{\delta f}{V'} \right) +CM \left(  \norm{\delta a}{\Bil}+ \norm{\delta b}{\Bil} \right).
\end{align}
Inserting \eqref{eq:003} in \eqref{eq:002} and applying the Young inequality yields for arbitrary $\lambda >0$
\begin{align*}
\norm{\delta v}{V}^2 
\leq CM \big(\norm{\delta b}{\Bil} \norm{\delta v}{V}  + \norm{\delta a}{\Bil} \norm{\delta v}{V}  \Big) 
\\+ CM 
\norm{\delta b}{\Bil}
\left(\norm{\delta v}{V} + \norm{\delta f}{V'} \right) 
+
CM^2 \norm{\delta b}{\Bil}\left(  \norm{\delta a}{\Bil}+ \norm{\delta b}{\Bil} \right)
\\+
C\norm{\delta g}{Q'} 
\left(\norm{\delta v}{V} + \norm{\delta f}{V'} \right) +
CM \norm{\delta g}{Q'} \left(  \norm{\delta a}{\Bil}+ \norm{\delta b}{\Bil} \right)
\\
+\norm{\delta f}{V'} \norm{\delta v}{V}
\\
\leq 
C_\lambda M^2 (\norm{\delta b}{\Bil}^2 +  \norm{\delta a}{\Bil}^2
)
+
\lambda \norm{\delta v}{V}^2
+
C_\lambda \norm{\delta f}{V'}^2
+
C_\lambda\norm{\delta g}{Q'}^2.
\end{align*}
By choosing $\lambda$ small enough and taking the square, we obtain
\begin{align}\label{eq:004}
    \norm{\delta v}{V}
\leq
C M (\norm{\delta b}{\Bil} +  \norm{\delta a}{\Bil}
)
+
C \norm{\delta f}{V'}
+
C\norm{\delta g}{Q'}.
\end{align}
Inserting \eqref{eq:004} into \eqref{eq:003} yields
\begin{align}\label{eq:005}
\norm{\delta p}{Q} \leq
C M (\norm{\delta b}{\Bil} +  \norm{\delta a}{\Bil}
)
+
C \norm{\delta f}{V'}
+
C\norm{\delta g}{Q'}.
\end{align}
Thus, \eqref{eq:004}--\eqref{eq:005} imply the local Lipschitz continuity of $\mathcal{L}$.
\end{proof}

\section{Two-scale convergence and unfolding}
\label{SectionTwoScaleConvergence}
In this section, we briefly summarise the notion of two-scale convergence and the concept of the unfolding operator. These methods provide the basic techniques to pass to the limit $\e \to 0$ in the microscopic problem.

\subsection{Two-scale convergence}
\label{SubsectionTwoScaleConvergence}

We start with the definition of two-scale convergence, which was first introduced and analyzed in \cite{Nguetseng} and \cite{Allaire_TwoScaleKonvergenz}, see also \cite{LukkassenNguetsengWallTSKonvergenz}.  Here, we formulate the two-scale convergence for time-dependent functions. However, the aforementioned references can be easily extended to such problems since time acts as a parameter. For the initial data, we use the usual two-scale convergence for time-independent functions, which can be defined in the same way.

\begin{definition}\label{DefinitionTSConvergence}
A sequence $\ueps \in  L^q((0,T),L^p( \Omega))$ for $p,q\in [1,\infty)$ is said to converge  in the two-scale sense (in $L^q L^p$) to the limit function $u_0\in L^q((0,T);L^p( \Omega \times Y))$ if the following relation holds
\begin{align*}
\lim_{\e\to 0}\int_0^T \int_{\Omega}u_{\e}(t,x)\phi\left(t,x,\frac{x}{\e}\right)\,\mathrm{d}x\,\mathrm{d}t = \int_0^T\int_{\Omega}\int_Y u_0(t,x,y)\phi(t,x,y)\,\mathrm{d}y\,\mathrm{d}x\,\mathrm{d}t 
\end{align*}
for every $\phi \in L^{q'}((0,T);L^{p'}( \Omega, C_{\per}^0(Y)))$. In this case, we write $\ueps \rightwts{q,p} u_0$. For $q = p$, we use the notation $\ueps \rightwts{p} u_0$.

A  two-scale convergent sequence $u_{\e}$ convergences strongly in the two-scale sense to $u_0$ (in $L^q L^p$) if  
\begin{align*}
\lim_{\e\to 0}\|u_{\e}\|_{L^q((0,T), L^p( \Omega))} =\|u_0\|_{L^q((0,T), L^p( \Omega  \times Y))} ,
\end{align*}
and we write $\ueps \rightsts{q,p} u_0$ in this case. For $q = p$, we write $\ueps \rightsts{p} u_0$.
\end{definition} 

In \cite{AllaireDamlamianHornung_TwoScaleBoundary,Neuss_TwoScaleBoundary}, the method of two-scale convergence was extended to oscillating surfaces:
\begin{definition}\label{DefinitionTSKonvergenzBoundary}
A sequence of functions $\ueps  \in L^q((0,T),L^p(\Gamma_{\e}))$ for $p,q \in [1,\infty)$ is said to converge in the two-scale sense on the surface $\Gamma_{\e}$ (in $L^qL^p$) to a limit $u_0\in L^q((0,T),L^p(\Omega \times \Gamma))$ if 
\begin{align*}
\lim_{\e \to 0} \e \int_0^T\int_{\Gamma_{\e}} u_{\e}(t,x)\phi\left(t,x,\frac{x}{\e}\right)\,\mathrm{d}\sigma \,\mathrm{d}t = \int_0^T\int_{\Omega}\int_{\Gamma} u_0(t,x,y)\phi(t,x,y)\,\mathrm{d}\sigma_y\,\mathrm{d}x\,\mathrm{d}t\end{align*}
holds for every $\phi \in C\left([0,T]\times\overline{\Omega},C_{per}^0(\Gamma)\right)$.
We write $\ueps \rightwts{q,p} u_0$ on $\geps$ in this case. For $q = p$, we use the notation $\ueps \rightwts{p} u_0$ on $\geps$.

We say that a  two-scale convergent sequence $u_{\e}$ converges strongly in the two-scale sense on $\geps$ (in $L^pL^q$) if
\begin{align*}
\lim_{\e\to 0}  \e^{\frac{1}{p}}\|u_{\e}\|_{L^q((0,T),L^p( \Gamma_{\e}))} = \|u_0\|_{L^q((0,T),L^p(\Omega \times \Gamma))}
\end{align*}
additionally holds
and we write $\ueps \rightsts{q,p} u_0$ in this case. For $q = p$, we write $\ueps \rightsts{p} u_0$ on $\geps$.
\end{definition}

We have the following compactness results (see e.g.~\cite{Allaire_TwoScaleKonvergenz,LukkassenNguetsengWallTSKonvergenz,Neuss_TwoScaleBoundary}, with slight modification for time-dependent functions):
\begin{lemma}\label{BasicTwoScaleCompactness}\
For every $p,q \in (1,\infty)$ we have:
\begin{enumerate}
[label = (\roman*)]
\item For every bounded sequence $\ueps \in L^q((0,T),L^p(\Omega))$, there exists $u_0 \in L^q((0,T),L^p( \Omega \times Y))$ such that, up to a subsequence,
\begin{align*}
\ueps &\rightwts{q,p} u_0.
\end{align*}
\item For every bounded sequence $\ueps \in L^q((0,T),W^{1,p}(\Omega))$, there exist $u_0 \in L^q((0,T),L^p( \Omega))$ and $u_1 \in L^q((0,T), L^p( \Omega,W^{1,p}_{\per}(Y)/\R))$ such that, up to a subsequence,
\begin{align*}
\ueps &\rightwts{q,p} u_0,
\\
\nabla \ueps &\rightwts{q,p} \nabla_x u_0 + \nabla_y u_1 .
\end{align*}
\item For every sequence $\ueps \in L^q((0,T),W^{1,p}(\Omega))$ with $\ueps $ and $\e\nabla \ueps$ bounded in $L^q((0,T), W^{1,p}(\Omega))$, there exists $u_0 \in L^q((0,T), L^p(\Omega, W^{1,p}_{\per}(Y)))$ such that, up to a subsequence, it holds that
\begin{align*}
    \ueps &\rightwts{q,p} u_0,
    \\
    \e \nabla \ueps &\rightwts{q,p} \nabla_y u_0.
\end{align*}
\item For every sequence $\ueps \in L^q((0,T),L^p( \geps))$ with
\begin{align*}
\e^{\frac{1}{p}}\Vert \ueps\Vert_{L^q((0,T),L^p(\geps))} \le C,
\end{align*}
there exists $u_0 \in L^q((0,T),L^p(\Omega \times \Gamma))$ such that, up to a subsequence,
\begin{align*}
\ueps \rightwts{q,p} u_0 \quad \mbox{ on } \geps.
\end{align*}
\end{enumerate}

\end{lemma}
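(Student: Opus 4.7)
The plan is to adapt the classical arguments of Nguetseng and Allaire to the time-dependent setting, treating $t$ as a parameter so that the stationary proofs carry over essentially unchanged. The core tool for each item is a Banach--Alaoglu argument on a suitable space of admissible test functions, followed by identification of the two-scale limit through integration by parts and a de Rham--type orthogonality argument.

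For (i), I would first establish the periodic averaging estimate
\begin{align*}
\lim_{\e\to 0}\int_0^T\!\int_\Omega|\phi(t,x,x/\e)|^{p'}\,\mathrm{d}x\,\mathrm{d}t = \int_0^T\!\int_\Omega\!\int_Y|\phi(t,x,y)|^{p'}\,\mathrm{d}y\,\mathrm{d}x\,\mathrm{d}t
\end{align*}
for $\phi \in L^{q'}((0,T); L^{p'}(\Omega; C_{\per}(Y)))$, obtained first for simple tensors $\chi(t,x)\psi(y)$ and then extended by density. Combined with H\"older's inequality, this shows that the functional $T_\e(\phi) := \int_0^T\!\int_\Omega u_\e(t,x)\phi(t,x,x/\e)\,\mathrm{d}x\,\mathrm{d}t$ is uniformly bounded on the separable Banach space $L^{q'}((0,T); L^{p'}(\Omega; C_{\per}(Y)))$. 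Banach--Alaoglu yields a weakly-$*$ convergent subsequence with limit $T_0$ satisfying $|T_0(\phi)| \le C\|\phi\|_{L^{q'}L^{p'}(\Omega\times Y)}$. Density of this space of admissible test functions in $L^{q'}((0,T); L^{p'}(\Omega\times Y))$ extends $T_0$ to the latter reflexive Bochner space, whose Riesz representation produces the desired $u_0 \in L^q((0,T); L^p(\Omega\times Y))$.

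For (ii), applying (i) to $u_\e$ and $\nabla u_\e$ along a common subsequence yields $u_\e \rightwts{q,p} u_0$ and $\nabla u_\e \rightwts{q,p} \xi_0$. I would first show $\nabla_y u_0 = 0$ by testing $\nabla u_\e$ against $\e\,\eta(t,x)\nabla_y\psi(x/\e)$: the left-hand side is of order $\e$, while integration by parts yields in the limit $-\int u_0\,\eta\,\Delta_y\psi = 0$ for all $\psi \in C^\infty_{\per}(Y)$, hence $u_0$ is $y$-independent. Then, testing $\nabla u_\e$ against periodic fields $\Phi(t,x,y)$ satisfying $\nabla_y\cdot\Phi = 0$, integration by parts in $x$ combined with this constraint gives $\int \nabla u_\e\cdot\Phi(\cdot,\cdot/\e) = -\int u_\e\,\nabla_x\cdot\Phi(\cdot,\cdot/\e)$, and passing to the limit yields $\int(\xi_0 - \nabla_x u_0)\cdot\Phi = 0$. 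The de Rham characterisation on $Y$, combined with a measurable selection in $(t,x)$, produces $u_1 \in L^qL^p(\Omega; W^{1,p}_{\per}(Y)/\R)$ with $\xi_0 = \nabla_x u_0 + \nabla_y u_1$. Part (iii) is analogous: integration by parts on $\int\e\,\nabla u_\e\cdot\phi(t,x,x/\e)$ produces in the limit $\int w_0\cdot\phi = -\int u_0\,\nabla_y\phi$ for all admissible $\phi$, from which $u_0 \in L^qL^p(\Omega; W^{1,p}_{\per}(Y))$ and $w_0 = \nabla_y u_0$ follow by duality.

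For (iv), the same Banach--Alaoglu strategy applies to $T_\e^\Gamma(\phi) := \e\int_0^T\!\int_{\geps}u_\e\,\phi(t,x,x/\e)\,\mathrm{d}\sigma\,\mathrm{d}t$ on $L^{q'}((0,T); L^{p'}(\Omega; C_{\per}(\Gamma)))$, once the surface averaging identity
\begin{align*}
\e\int_{\geps}\phi(x,x/\e)\,\mathrm{d}\sigma \;\longrightarrow\; \int_\Omega\!\int_\Gamma\phi(x,y)\,\mathrm{d}\sigma_y\,\mathrm{d}x
\end{align*}
is established for continuous integrands and extended by density. The main obstacle throughout is this density step: one must ensure that the test functions continuous in $y$ are norm-dense in the target Bochner spaces $L^{q'}((0,T); L^{p'}(\Omega\times Y))$ and $L^{q'}((0,T); L^{p'}(\Omega\times\Gamma))$, with uniform control of the embedding constants. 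This requires separability of the $y$-factor combined with a careful simple-function approximation in $(t,x)$ and Stone--Weierstrass in $y$. The other delicate point, the de Rham orthogonality in (ii), is standard and follows exactly as in the stationary case treated in the cited references.
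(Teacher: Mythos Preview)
The paper does not actually prove this lemma: it simply states the result and refers to \cite{Allaire_TwoScaleKonvergenz,LukkassenNguetsengWallTSKonvergenz,Neuss_TwoScaleBoundary}, remarking that the time variable is treated as a parameter. Your proposal correctly reconstructs the classical Nguetseng--Allaire argument underlying those references, so in substance you are supplying what the paper omits rather than offering an alternative route.

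Your sketch is sound. A couple of minor points worth tightening if you write this out in full: in (ii), the statement only asserts $u_0 \in L^q((0,T),L^p(\Omega))$, yet $\nabla_x u_0$ appears in the conclusion, so you should make explicit that the $W^{1,p}$-regularity of $u_0$ is recovered either directly from weak compactness of $u_\e$ in $L^q((0,T),W^{1,p}(\Omega))$ or, as you hint, a posteriori by taking $y$-independent test fields $\Phi = \phi(t,x)e_i$ in your orthogonality identity. Also, in (iii) the hypothesis as stated in the paper has a typo (it should read ``$u_\e$ and $\e\nabla u_\e$ bounded in $L^q((0,T),L^p(\Omega))$'' rather than $W^{1,p}$), but your argument works under the intended hypothesis. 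The density and de Rham steps you flag as delicate are indeed the places where care is needed, and they are handled exactly as in the stationary references.
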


\subsection{The unfolding operator}
When dealing with nonlinear problems it is helpful to work with the unfolding method, which allows a characterization of weak and strong two-scale convergence, see Lemma \ref{LemmaAequivalenzTSKonvergenzUnfolding} below. We refer to \cite{CioranescuGrisoDamlamian2018} for a detailed investigation of the unfolding operator and its properties. For a perforated domain (here we also allow the case $Y^{\ast} = Y$, \ie~$\oe = \Omega$), we define the unfolding operator for $p ,q\in [1,\infty]$ by
\begin{align*}
\teps : L^q((0,T) ,L^p( \oe)) \rightarrow L^q((0,T),L^p( \Omega \times Y^{\ast})), \\
\teps(\ueps)(t,x,y)= \ueps\left(t,\e \left[\fxe\right] + \e y\right).
\end{align*}
In the same way, we define the boundary unfolding operator for the oscillating surface $\geps$ via
\begin{align*}
\teps : L^q((0,T), L^p( \geps)) \rightarrow L^q((0,T), L^p( \Omega \times \Gamma)), \\
\teps(\ueps)(t,x,y)= \ueps\left(t,\e \left[\fxe\right] + \e y\right).
\end{align*}
We emphasise that we use the same notation for the unfolding operator on $\oe$ and the boundary unfolding operator on $\geps$. We summarise some basic properties of the unfolding operator, see \cite{CioranescuGrisoDamlamian2018}, generalised in an obvious way to time-dependent functions with different integrability $p$ and $q$:

\begin{lemma}\label{LemmaPropertiesUnfoldingOperator}
Let $q,p \in [1,\infty]$.
\begin{enumerate}[label = (\roman*)]
\item   For $\ueps \in L^q((0,T),L^p(\oe))$, it holds that
\begin{align*}
\| \teps (\ueps) \|_{L^p((0,T), L^p(\Omega \times Y^{\ast}))} = \|\ueps\|_{L^q((0,T),L^p(\oe))}.
\end{align*}
\item For $\ueps \in L^q((0,T),W^{1,p}(\oe))$, it holds that
\begin{align*}
\nabla_y \teps (\ueps )= \e \teps( \nabla_x \ueps).
\end{align*} 
\item  For $\ueps \in L^q((0,T),L^p(\geps))$, it holds that
\begin{align*}
\|\teps (\ueps) \|_{L^q((0,T),L^p(\Omega \times \Gamma))} = \e^{\frac{1}{p}} \|\ueps\|_{L^q((0,T),L^p(\geps))}.
\end{align*}
\end{enumerate}
\end{lemma}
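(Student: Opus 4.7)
The plan is to verify each identity by a direct cell-by-cell change of variables, exploiting the standing assumption $a,b\in\Z^n$ and $\e^{-1}\in\N$, which guarantees that $\overline{\Omega}=\bigcup_{k\in K_\e}\e(k+\overline{Y})$ is an exact (up to null sets) disjoint union of whole $\e$-scaled microcells. This ensures that the stated identities hold as equalities rather than only up to a boundary correction, which is often a nuisance in more general geometries.

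For (i), I would fix $t \in (0,T)$ and decompose
\begin{align*}
\int_{\Omega}\int_{Y^{\ast}} |\teps(\ueps)(t,x,y)|^p\,\mathrm{d}y\,\mathrm{d}x
= \sum_{k\in K_\e}\int_{\e(k+Y)}\int_{Y^{\ast}} |\ueps(t,\e k+\e y)|^p\,\mathrm{d}y\,\mathrm{d}x,
\end{align*}
using that $[x/\e]=k$ on $\e(k+Y)$. Since the integrand is independent of $x$, integrating in $x$ yields a factor $|\e(k+Y)|=\e^n$; then the change of variables $z=\e k+\e y$ on $Y^{\ast}$ has Jacobian $\e^n$ and converts the remaining integral into $\int_{\e(k+Y^{\ast})}|\ueps(t,z)|^p\,\mathrm{d}z$. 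Summing over $k\in K_\e$ reconstitutes $\int_{\oe}|\ueps(t,z)|^p\,\mathrm{d}z$, and taking the $p$-th root and then the $L^q$-norm in time yields the identity. The cases $p=\infty$ or $q=\infty$ follow by the standard essential-supremum reformulation of the same cell-by-cell calculation.

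Part (ii) is just the chain rule applied to the representative $\teps(\ueps)(t,x,y)=\ueps(t,\e[x/\e]+\e y)$: the shift $\e[x/\e]$ is locally constant in $y$ on each cell interior, so differentiating in $y$ produces exactly $\nabla_y\teps(\ueps)=\e\,\teps(\nabla_x\ueps)$ almost everywhere. For (iii) I would repeat the cell decomposition from (i), but the inner integration is now a surface integral over $\Gamma$. The surface change of variables $z=\e k+\e y$ on $\Gamma$ carries the surface Jacobian $\e^{n-1}$ (not $\e^n$), and combined with the bulk factor $\e^n$ coming from integrating the constant-in-$x$ integrand over $\e(k+Y)$ this leaves exactly one uncancelled power of $\e$, producing the announced prefactor $\e^{1/p}$ after taking the $p$-th root. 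None of the steps is delicate; the only thing worth flagging is precisely the mismatch between the bulk $\e^n$ and the surface $\e^{n-1}$ in (iii), which is the structural origin of the characteristic $\e^{1/p}$-scaling that appears throughout the two-scale convergence theory on oscillating surfaces.
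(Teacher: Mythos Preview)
Your argument is correct and is exactly the standard cell-by-cell change-of-variables computation. The paper does not supply its own proof of this lemma; it simply states the properties and refers to \cite{CioranescuGrisoDamlamian2018} for the details, noting that the time-dependent $L^qL^p$ version follows in an obvious way.
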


The following lemma gives a relation between the unfolding operator and two-scale convergence. Its proof is quite standard and we refer the reader to \cite{BourgeatLuckhausMikelic} and \cite{CioranescuGrisoDamlamian2018} for more details (see also \cite[Proposition 2.5]{visintin2006towards}).
\begin{lemma}\label{LemmaAequivalenzTSKonvergenzUnfolding}
Let $p,q \in (1,\infty)$.
\begin{enumerate}
[label = (\alph*)]
\item For   a sequence $\ueps \in L^q((0,T),L^p( \Omega))$, the following statements are equivalent:
\begin{enumerate}[label = (\roman*)]
\item $\ueps \rightwts{q,p} u_0$ ($\ueps \rightsts{q,p} u_0$)
\item $\teps (\ueps )\rightharpoonup u_0$  ($\teps(\ueps) \rightarrow u_0 $) in $L^q((0,T), L^p( \Omega \times Y))$.
\end{enumerate}
\item  For a sequence $\ueps \in L^q((0,T),L^p(\geps))$,  the following statements are equivalent:
\begin{enumerate}[label = (\roman*)]
\item $\ueps \rightwts{q,p} u_0$ ($\ueps \rightsts{q,p} u_0$) on $\geps$,
\item $\teps (\ueps) \rightharpoonup u_0$ ($\teps(\ueps) \rightarrow u_0$) in $L^q((0,T),L^p(\Omega \times \Gamma))$.
\end{enumerate}
\end{enumerate}
\end{lemma}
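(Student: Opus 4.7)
The plan is to build the characterization from the following fundamental identity. For any admissible test function $\phi \in L^{q'}((0,T);L^{p'}(\Omega,C_{\mathrm{per}}^0(Y)))$, by covering $\Omega$ (up to a boundary strip of measure $O(\e)$) by $\e$-cells and changing variables $z = \e[x/\e]+\e y$, one obtains
\begin{equation*}
\int_0^T\!\!\int_\Omega u_\e(t,x)\,\phi\bigl(t,x,\tfrac{x}{\e}\bigr)\,\mathrm{d}x\,\mathrm{d}t
=
\int_0^T\!\!\int_\Omega\!\!\int_Y \mathcal{T}_\e(u_\e)(t,x,y)\,\mathcal{T}_\e\bigl(\phi(\cdot,\tfrac{\cdot}{\e})\bigr)(t,x,y)\,\mathrm{d}y\,\mathrm{d}x\,\mathrm{d}t + r_\e,
\end{equation*}
with $|r_\e| \to 0$ (the boundary strip contribution vanishes since $u_\e$ is bounded in $L^qL^p$ and $\phi$ is bounded). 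Using the uniform continuity of $\phi$ and the identity $\mathcal{T}_\e(\phi(\cdot,\tfrac{\cdot}{\e}))(t,x,y) = \phi(t,\e[\tfrac{x}{\e}]+\e y, y)$ for $y \in Y$, one checks that $\mathcal{T}_\e(\phi(\cdot,\tfrac{\cdot}{\e})) \to \phi$ strongly in $L^{q'}((0,T);L^{p'}(\Omega \times Y))$.

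For part (a), direction (i)$\Rightarrow$(ii) in the weak case: by Lemma \ref{LemmaPropertiesUnfoldingOperator}(i) the sequence $\mathcal{T}_\e(u_\e)$ is bounded in the reflexive space $L^q((0,T);L^p(\Omega \times Y))$, so a subsequence converges weakly to some $\widetilde u$. The fundamental identity above, combined with strong convergence of the unfolded test function, forces the integral $\int \widetilde u \phi$ to equal the two-scale limit $\int u_0 \phi$ for every admissible $\phi$; a density argument then gives $\widetilde u = u_0$, and uniqueness of the limit yields convergence of the whole sequence. Direction (ii)$\Rightarrow$(i) is the same identity read in reverse: weak convergence of $\mathcal{T}_\e(u_\e)$ paired with strong convergence of $\mathcal{T}_\e(\phi(\cdot,\tfrac{\cdot}{\e}))$ yields the desired pairing with $\phi(t,x,x/\e)$. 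For the strong case, Lemma \ref{LemmaPropertiesUnfoldingOperator}(i) is an isometry, so $\|\mathcal{T}_\e(u_\e)\|_{L^qL^p(\Omega\times Y)} = \|u_\e\|_{L^qL^p(\Omega)}$. Since $L^q((0,T);L^p(\Omega \times Y))$ is uniformly convex for $p,q \in (1,\infty)$, weak convergence plus norm convergence to the limit's norm upgrades to strong convergence; the norm equality transfers precisely this strong convergence between the two formulations, in both directions.

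Part (b) follows the same scheme, using the boundary analogue of the fundamental identity. For $\phi \in C([0,T] \times \overline\Omega, C_{\mathrm{per}}^0(\Gamma))$ one has
\begin{equation*}
\e \int_0^T\!\!\int_{\Gamma_\e} u_\e\,\phi\bigl(t,x,\tfrac{x}{\e}\bigr)\,\mathrm{d}\sigma\,\mathrm{d}t
=
\int_0^T\!\!\int_\Omega\!\!\int_\Gamma \mathcal{T}_\e(u_\e)\,\mathcal{T}_\e\bigl(\phi(\cdot,\tfrac{\cdot}{\e})\bigr)\,\mathrm{d}\sigma_y\,\mathrm{d}x\,\mathrm{d}t + \widetilde r_\e,
\end{equation*}
with $\widetilde r_\e \to 0$, and Lemma \ref{LemmaPropertiesUnfoldingOperator}(iii) provides the isometry $\|\mathcal{T}_\e(u_\e)\|_{L^qL^p(\Omega\times\Gamma)} = \e^{1/p}\|u_\e\|_{L^qL^p(\Gamma_\e)}$. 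The weak and strong equivalences now follow by exactly the same reasoning as in part (a).

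The main obstacle is the technical handling of the two error terms $r_\e$ and $\widetilde r_\e$, which arise because the $\e$-cells do not tile $\Omega$ exactly and $\mathcal{T}_\e$ is defined to be zero (or trivially extended) on the boundary strip $\Omega \setminus \e\bigcup_{k}(k+Y)$. Controlling them requires only the uniform equi-integrability of $u_\e$ in $L^qL^p$ (from boundedness) and uniform continuity of $\phi$; this is standard but must be done carefully for the boundary unfolding, where the additional factor $\e^{1/p}$ must be tracked to confirm the scaling matches the weak two-scale pairing on $\Gamma_\e$.
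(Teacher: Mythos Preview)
Your proposal is correct and follows the standard argument that the paper itself defers to the literature (the paper gives no proof of this lemma, only citing \cite{BourgeatLuckhausMikelic}, \cite{CioranescuGrisoDamlamian2018} and \cite{visintin2006towards}). One simplification available in this particular setting: since $\Omega = (a,b)$ with $a,b \in \Z^n$ and $\e^{-1} \in \N$, the $\e$-cells tile $\Omega$ exactly, so the boundary-strip error terms $r_\e$ and $\widetilde r_\e$ vanish identically and the equi-integrability discussion is unnecessary.
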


\end{appendix}

\bibliographystyle{abbrv}
\bibliography{arxiv}

\end{document}